\numberwithin{equation}{section}
\theoremstyle{plain}
\newtheorem{thm}{Theorem}[section]
\newtheorem{prop}[thm]{Proposition}
\newtheorem{cor}[thm]{Corollary}
\newtheorem{lem}[thm]{Lemma}
\theoremstyle{definition}
\newtheorem{exa}[thm]{Example}
\newtheorem{conj}[thm]{Conjecture}
\newtheorem{rem}[thm]{Remark}
\newtheorem{defi}[thm]{Definition}
\newtheorem{prob}[thm]{Problem}
\newcommand{\real}{\mathbb{R}}
\newcommand{\comp}{\mathbb{C}}
\renewcommand{\caption}{\hangcaption}
\def\mrhd{\kern0.2em\rule{0.035em}{0.52em}\kern-.35em\gtrdot \kern-.2em}
\def\submrhd{\mathop{\kern0.0em\lower0.05ex\hbox{\rule{0.03em}{0.39em}}\kern-0.08em\gtrdot \kern0.0em}}
\def\dismrhd{\mathop{{\kern0.1em\lower0.07ex\hbox{\rule{0.035em}{0.57em}}\kern-.1em\gtrdot \kern0.02em}}}
\begin{document}
\title{Free infinite divisibility for beta distributions and related ones}

\author{Takahiro Hasebe \\ Department of Mathematics, Hokkaido University \\ Kita 10, Nishi 8, Kita-ku, Sapporo 060-0810, Japan\footnote{Most of this work was done when the author was in Kyoto University and in University of Franche-Comt\'e in Besan\c{c}on}}   
\date{}
\maketitle

\begin{abstract}
We prove that many of beta, beta prime, gamma, inverse gamma, Student t- and ultraspherical distributions are freely infinitely divisible, but
 some of them are not. The latter negative result follows from a local property of probability density functions. Moreover, we show that
the Gaussian, many of ultraspherical and Student t-distributions have free divisibility indicator 1.  
\end{abstract}

Mathematics Subject Classification 2010: 46L54, 33C05, 30B40, 60E07

Key words: Free infinite divisibility, beta distribution, beta prime distribution 

\section{Introduction}
\subsection{Beta and beta prime distributions}
Wigner's semicircle law $\mathbf{w}$ and the Marchenko-Pastur law (or  free Poisson law) $\mathbf{m}$, defined by 
$$
\mathbf{w}(dx) =  \frac{\sqrt{4-x^2}}{2\pi}1_{[-2,2]}(x)\,dx,~~~\mathbf{m}(dx)=\frac{1}{2\pi}\sqrt{\frac{4-x}{x}}1_{[0,4]}(x)\,dx, 
$$
are the most important distributions in free probability because they are respectively the limit distributions of the free central limit theorem and free Poisson's law of small numbers. 
In the context of random matrices, $\mathbf{w}$ and $\mathbf{m}$ are the large $N$ limit of the eigenvalue distributions of $X_N$ and $X_N^2$ respectively, where $X_N$ is an $N\times N$ normalized Wigner matrix.   

Those measures belong to the class of \emph{freely infinitely divisible} (or FID for short) distributions, the main subject of this paper. This class appears as the spectral distributions of large random matrices~\cite{BG05,C05}. 
Research on free probability or more specifically FID distributions has motivated some new directions in classical probability: the upsilon transformation (see \cite{BT06}), type A distributions \cite{ABP09, MPS12} and matrix-valued L\'evy processes~\cite{AM12}. 
Handa \cite{H12} found a connection of branching processes and generalized gamma convolutions (GGCs) to Boolean convolution (see Section \ref{fdi}), a convolution related to free probability.  

Up to affine transformations, $\mathbf{w}$ and $\mathbf{m}$ are special cases of \emph{beta distributions}: 
$$
\bm{\bm{\beta}}_{p,q}(dx) := \frac{1}{B(p,q)} x^{p-1}(1-x)^{q-1}\,1_{[0,1]}(x)\, dx,~~p,q>0, 
$$ 
where $B(p,q)$ is the beta function $\int_0^1x^{p-1}(1-x)^{q-1}\,dx$. 
Moreover, $\bm{\bm{\beta}}_{1/2,1/2}$ is the arcsine law which appears in the monotone central limit theorem \cite{M01} and plays a central role in free type A distributions \cite{ABP09}.  
If $p=q$, the beta distribution $\bm{\bm{\beta}}_{p,p}$ can be shifted to a symmetric measure which is called the \emph{ultraspherical distribution} essentially. 
This family contains Wigner's semicircle law and a symmetric arcsine law. If we take the limit $p \to 0$, a Bernoulli law appears, which is known as the limit distribution of Boolean central limit theorem \cite{SW97}. In the case $p+q=2$, the measure $\bm{\bm{\beta}}_{p,q}$ has explicit Cauchy and Voiculescu transforms \cite{AHb}. Moreover, if we let $\bm{\bm{\beta}}_a :=\bm{\bm{\beta}}_{1-a,1+a}$, $-1< a <1$, it holds that 
\begin{equation}\label{eq001}
(D_b \bm{\bm{\beta}}_{a}) \rhd \bm{\bm{\beta}}_{b} = \bm{\bm{\beta}}_{ab},~a,b\in (-1,1).  
\end{equation}
The binary operation $\rhd$ is \emph{monotone convolution} \cite{M00,F09} and $D_a \mu$ is the \emph{dilation} of a probability measure $\mu$ by $a$: 
$(D_a\mu)(A):=\mu(\frac{1}{a}A)$ for Borel sets $A \subset \real$ and $a \neq 0$.  $D_0\mu$ is defined to be $\delta_0$.

\emph{Beta prime distributions} (or beta distributions of the second kind)  
$$
\bm{\bm{\beta}}'_{p,q}(dx):=\dfrac{1}{B(p,q)} \dfrac{x^{p-1}}{(1+x)^{p+q}}\,1_{[0,\infty)}(x)\, dx,~~p,q>0,
$$
also appear related to free probability.  
The measure $\bm{\beta}_{3/2, 1/2}'$ is a one-sided free stable law with stability index $1/2$; see p.\ 1054 of \cite{BP99}. 
The same measure also appears as the law of an affine transformation of $X^{-1}$ when $X$ follows the free Poisson law $\mathbf{m}$.  
If $X$ follows the semicircle law $\textbf{w}$, then $\frac{1}{X+2}$ follows the beta prime distribution $\bm{\bm{\beta}}'_{3/2,3/2}$ up to an affine transformation. If $X$ follows a Cauchy distribution, i.e.\ a free stable law with stability index $1$, then $X^2$ follows the beta prime distribution $\bm{\bm{\beta}}'_{1/2,1/2}$. 

Thus various beta and beta prime distributions appear in noncommutative probability. One motivation of this paper is to understand free infinite divisibility for these distributions.

\subsection{Gamma, inverse gamma, ultraspherical and t-distributions}
Related to beta and beta prime distributions are \emph{gamma distributions} $\gamma_p$, \emph{inverse gamma distributions} $\gamma_p^{-1}$, \emph{ultraspherical distributions} $\mathbf{u}_p$ and (essentially) \emph{t-distributions} $\mathbf{t}_q$:  
\[
\begin{array}{llllll}
&\bm{\gamma}_p(dx)&:= \dfrac{1}{\Gamma(p )} x^{p-1}e^{-x}\,1_{[0,\infty)}(x)\,dx, &p>0,\\ [13pt]
&\bm{\gamma}^{-1}_p(dx)&:= \dfrac{1}{\Gamma( p)} x^{-p-1}e^{-1/x}\,1_{[0,\infty)}(x)\,dx,&p>0, \\[13pt]
&\mathbf{u}_p(dx)&:= \dfrac{1}{4^{p}B(p+\frac{1}{2}, p+\frac{1}{2})}(1-x^2)^{p-\frac{1}{2}}1_{[-1,1]}(x)\,dx,& p > -\frac{1}{2},\\[13pt]
&\mathbf{t}_q(dx)&:= \dfrac{1}{B(\frac{1}{2},q- \frac{1}{2})} \dfrac{1}{(1+x^2)^q}\,1_{(-\infty,\infty)}(x)\,dx,&q>\frac{1}{2}. 
\end{array}
\]
Note that $\bm{\gamma}^{-1}_{1/2}$ coincides with a classical $1/2$-stable law, called the L\'evy distribution. 

If a random variable $X$ follows a distribution $\mu$, we write $X \sim \mu$. If $X \sim \mu$, the measure $D_a \mu$ coincides with the distribution of $a X$. 
The measures $\bm{\bm{\beta}}_{p,q}, \bm{\bm{\beta}}'_{p,q}, \bm{\gamma}_p, \bm{\gamma}^{-1}_p, \mathbf{t}_q$ satisfy the following relations:

\begin{enumerate}[\rm(1)]

\item If $X \sim \bm{\bm{\beta}}_{p,q}$, then $\frac{X}{1-X} \sim \bm{\bm{\beta}}'_{p,q}$, $\frac{1}{X}-1 \sim \bm{\beta}'_{q,p}$. 

\item $\lim_{q\to \infty}D_{q}\bm{\beta}_{p,q} = \bm{\gamma}_p$ in the sense of weak convergence. 

\item $\lim_{q\to \infty}D_{1/q}\bm{\beta}'_{q, p} = \bm{\gamma}^{-1}_p$ in the sense of weak convergence. 

\item If $X \sim \bm{\gamma}_p$, then $X^{-1}\sim \bm{\gamma}^{-1}_p$. 
\item If $X \sim  \bm{\bm{\beta}}_{p+1/2,p+1/2}$, then $2X-1 \sim \mathbf{u}_p$. 

\item If $X \sim \mathbf{t}_q$, then $X^2 \sim \bm{\beta}'_{1/2, q-1/2}$. 
\end{enumerate}
The infinite divisibility of many of these measures as well as $\bm{\beta}'_{p,q}$ is known in classical probability, but proofs of some of them are not trivial. Bondesson's approach provides us with proofs of the above facts from a more general viewpoint (see \cite{B92}, p.\ 59 and p.\  117), which heavily depends on complex analysis. 
\begin{thm}
The probability measures $\bm{\bm{\beta}}'_{p,q}, \bm{\gamma}_p, \bm{\gamma}^{-1}_p, \mathbf{t}_q$ are infinitely divisible in classical probability for all parameters. 
The probability measures $\bm{\beta}_{p,q}, \mathbf{u}_p$ are not infinitely divisible. 
\end{thm}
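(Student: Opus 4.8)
The plan is to establish the two halves of the statement by entirely different methods: the infinite divisibility of $\bm{\beta}'_{p,q},\bm{\gamma}_p,\bm{\gamma}^{-1}_p,\mathbf{t}_q$ via the theory of hyperbolically completely monotone (HCM) densities and generalized gamma convolutions, and the non-infinite-divisibility of $\bm{\beta}_{p,q}$ and $\mathbf{u}_p$ via a soft boundedness argument.

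I would settle the negative part first, using the classical fact that a probability measure with bounded support is infinitely divisible only if it is degenerate. A one-line proof: if $\mu=\mu_n^{*n}$ with $\mathrm{supp}\,\mu\subseteq[a,b]$, then the convex hull of $\mathrm{supp}\,\mu_n$ is an interval of length at most $(b-a)/n$, so $\mathrm{Var}(\mu_n)\le (b-a)^2/(4n^2)$ and hence $\mathrm{Var}(\mu)=n\,\mathrm{Var}(\mu_n)\le (b-a)^2/(4n)\to 0$, forcing $\mu=\delta_c$ for some $c$. Since $\bm{\beta}_{p,q}$ is a non-degenerate measure supported on $[0,1]$ and $\mathbf{u}_p$ is a non-degenerate measure supported on $[-1,1]$ for all admissible parameters, neither is infinitely divisible.

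For the positive part, $\bm{\gamma}_p$ is immediate, since its characteristic function $(1-it)^{-p}$ has the $n$-th root $(1-it)^{-p/n}$, itself the characteristic function of a gamma law, so $\bm{\gamma}_p=\bm{\gamma}_{p/n}^{*n}$. For $\bm{\beta}'_{p,q}$ I would appeal to Bondesson's theory: a density on $(0,\infty)$ that is HCM is the density of a generalized gamma convolution, hence infinitely divisible; the HCM class is closed under products and contains $x\mapsto x^{\alpha}$ for every real $\alpha$ and $x\mapsto(1+cx)^{-\beta}$ for $c,\beta\ge0$; and $x^{p-1}(1+x)^{-(p+q)}$ is exactly such a product. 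The inverse gamma law $\bm{\gamma}^{-1}_p$ then follows in two ways: directly, because its density $x^{-p-1}e^{-1/x}$ is HCM (the factors $x^{-p-1}$ and $e^{-x}$ are HCM and the HCM class is invariant under $x\mapsto 1/x$); or softly, from relation~(3), $\bm{\gamma}^{-1}_p=\lim_{q\to\infty}D_{1/q}\bm{\beta}'_{q,p}$, together with the stability of infinite divisibility under dilations and weak limits. Finally, $\mathbf{t}_q$ is, up to a dilation, the law of $Z\sqrt{V}$ with $Z$ standard normal and $V$ an inverse gamma variable independent of $Z$ (consistently with relation~(6)); a centered Gaussian variance mixture with infinitely divisible mixing law on $[0,\infty)$ is infinitely divisible, since its characteristic function is $\exp(-\phi(t^2/2))$ with $\phi$ a Bernstein function, and $t\mapsto\phi(t^2/2)$ is then continuous and negative definite. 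Alternatively, $\mathbf{t}_q$ belongs to Bondesson's class $T_2$ because its push-forward under $x\mapsto x^2$, namely $\bm{\beta}'_{1/2,q-1/2}$ by relation~(6), has HCM density.

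The main obstacle is the infinite divisibility of $\bm{\beta}'_{p,q}$, which then propagates to $\bm{\gamma}^{-1}_p$ and $\mathbf{t}_q$. Unlike the gamma case there is no elementary factorization of the characteristic or Laplace transform, and the only clean route is the HCM/GGC machinery: one must either verify the HCM property of the relevant densities by hand --- checking that $u\mapsto f(uv)f(u/v)$ is a completely monotone function of $v+v^{-1}$ for each fixed $v>0$ --- or invoke Bondesson's (complex-analytic) development directly. Everything else, including the propagation to the remaining measures and the whole negative half, is then routine.
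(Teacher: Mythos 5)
Your proposal is correct and follows essentially the same route as the paper, which disposes of the negative half by the fact that a non-degenerate compactly supported measure cannot be infinitely divisible (your variance estimate is a clean elementary proof of that fact) and refers the positive half to Bondesson's HCM/GGC machinery, exactly the $x^{\alpha}\cdot(1+cx)^{-\beta}$ factorization you describe for $\bm{\beta}'_{p,q}$ and its inversion-invariance for $\bm{\gamma}^{-1}_p$. The only places you genuinely depart are the more elementary characteristic-function root for $\bm{\gamma}_p$ and the Gaussian variance-mixture (subordination) argument for $\mathbf{t}_q$; both are standard and correct, and they spare you Bondesson's $T_2$ class at the cost of invoking the closure of infinite divisibility under such mixtures.
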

The latter statement comes from the fact that infinitely divisible distributions except Dirac measures have unbounded supports.   

A motivation of this paper is to understand the free infinite divisibility for $\bm{\beta}_{p,q}$ and $\bm{\beta}_{p,q}'$ as mentioned. 
Another motivation is the following simple question: 
\begin{itemize}
\item What kind of infinitely divisible distributions in classical probability are FID? 
\end{itemize}
Belinschi et al.\ \cite{BBLS11} showed that the Gaussian is FID, which was quite unexpected because no apparent reason exists to expect this result. 
Other examples are also known in \cite{AHS,AHa, BH}.  In this paper we add more examples from $\bm{\beta}_{p,q}', \bm{\gamma}_p, \bm{\gamma}_p^{-1}, \mathbf{t}_q$. 

The proof of \cite{BBLS11} is based on a first-order differential equation of the Cauchy transform of the Gaussian.
The other motivation of the present paper is to understand the result of \cite{BBLS11} better, i.e.\ to understand the relationship between a first-order differential equation of the Cauchy transform and free infinite divisibility. In fact the Cauchy transforms of distributions $\bm{\beta}_{p,q}, \bm{\beta}_{p,q}', \mathbf{u}_p, \bm{\gamma}_p, \bm{\gamma}^{-1}_p, \mathbf{t}_q$ are all Gauss hypergeometric functions or limits of such and thus satisfy first-order differential equations. We will clarify what property of the Cauchy transform in addition to a first-order differential equation guarantees free infinite divisibility.

\subsection{Main results}
We summarize the known results. It is well known that Wigner's semicircle law and the free Poisson law are FID. The law $\bm{\beta}_a=\bm{\bm{\beta}}_{1-a,1+a}$ is FID if (and only if) $\frac{1}{2} \leq |a| <1$ \cite{AHb}. The free infinite divisibility for ultraspherical distributions $\textbf{u}_p$ was conjectured for $p \geq 1$ in \cite[Remark 4.4]{AP10}, and Arizmendi and Belinschi \cite{AB} showed that the ultraspherical distribution $\mathbf{u}_n$ (and also the beta distribution $\bm{\bm{\beta}}_{\frac{1}{2}, n+\frac{1}{2}}$) is FID for $n=1,2,3,\cdots$. 
For beta prime distributions, $\bm{\bm{\beta}}'_{2/3, 1/2}$ is a free stable law and so is FID \cite[p.\ 1054]{BP99}. The law $\bm{\bm{\beta}}'_{1/2,1/2}$ is also known to be FID because it is the square of a Cauchy distribution \cite{AHS}. 
The t-distribution $\mathbf{t}_q$ is FID for $q=1,2,3,\cdots$ \cite{H}. The chi-square distribution $\frac{1}{\sqrt{\pi x}}e^{-x}1_{[0,\infty)}(x)\,dx$ coincides with $\bm{\gamma}_{1/2}$ and it is FID \cite{AHS}, while the exponential distribution is not FID.\footnote{F.\ Lehner found a negative Hankel determinant of free cumulants of the exponential distribution. See also Section \ref{subsec12}. } 

The main theorem of this paper is the following, which is proved through Sections \ref{sec2}--\ref{st}. 

\begin{thm}\label{thm1}
\begin{enumerate}[\rm(1)]
\item\label{thm1-1} The beta distribution $\bm{\beta}_{p,q}$ is FID in the following cases: (i) $p, q \geq \frac{3}{2}$; (ii) $0<p\leq \frac{1}{2},~p+q \geq 2$; (iii) $0<q\leq \frac{1}{2},~p+q \geq 2$. 
\item The beta distribution $\bm{\beta}_{p,q}$ is not FID in the following cases: (i)  $0<p,q \leq 1$; (ii) $p \in \mathcal{I}$; (iii) $q \in \mathcal{I}$,  where 
$$
\mathcal{I}:=\left(\bigcup_{n=1}^\infty\left(\frac{2n-1}{2n}, \frac{2n}{2n+1}\right) \right) \cup \left( \bigcup_{n=1}^\infty\left(\frac{2n+2}{2n+1}, \frac{2n+1}{2n}\right)\right) \subset \left(\frac{1}{2}, \frac{3}{2}\right).
$$

\item The beta prime distribution $\bm{\beta}'_{p,q}$ is FID if $p\in(0,\frac{1}{2}]\cup [\frac{3}{2},\infty)$. 

\item The beta prime distribution $\bm{\beta}'_{p,q}$ is not FID if $p\in\mathcal{I}$. 

\item The  t-distribution $\mathbf{t}_q$ is FID if 
$$
q \in \left(\frac{1}{2}, 2\right] \cup  \bigcup_{n=1}^\infty \left[2n + \frac{1}{4}, 2n+2\right].
$$ 
\end{enumerate}
\end{thm}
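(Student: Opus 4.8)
The five items split into the positive assertions (\ref{thm1-1}), (3), (5) and the negative ones (2), (4). For the positive assertions the plan rests on the Bercovici--Voiculescu characterisation of free infinite divisibility: $\mu$ is FID if and only if its Voiculescu transform $\phi_\mu$ --- initially defined only on a truncated cone in $\mathbb{C}^+$ via $F_\mu=1/G_\mu$ and $\phi_\mu(z)=F_\mu^{\langle-1\rangle}(z)-z$ --- extends analytically to all of $\mathbb{C}^+$ with $\operatorname{Im}\phi_\mu\le0$ there. The Cauchy transforms of $\bm\beta_{p,q}$, $\bm\beta'_{p,q}$, $\mathbf{u}_p$, $\mathbf{t}_q$ are Gauss hypergeometric functions ${}_2F_1$ with one numerator parameter equal to $1$ (or weak limits of such), so each $G_\mu$ obeys a first-order linear ODE
\[
A(z)\,G_\mu'(z)=B(z)\,G_\mu(z)+C(z)
\]
with real polynomial coefficients of degree $\le2$; passing to the inverse function of $G_\mu$ turns this into a first-order ODE for $G_\mu^{\langle-1\rangle}$, hence for $\phi_\mu$.

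The first and central step --- the ``clarification'' promised in the introduction, to be carried out in Section~\ref{sec2} --- is a general proposition asserting: if, in addition to the ODE above, an explicit sign condition on $A,B,C$ holds along $\mathbb{R}$ and the boundary values $G_\mu(x+i0)$ (computable from the density) trace a suitable curve, then $\phi_\mu$ extends to $\mathbb{C}^+$ and $\operatorname{Im}\phi_\mu\le0$. Analytic continuation of $\phi_\mu$ is obtained by following the solution of its ODE along paths in $\mathbb{C}^+$; the only possible obstructions are the explicitly located zeros of the leading coefficient and of $B(z)w+C(z)$, which are excluded by an argument-principle count for the curve $x\mapsto G_\mu(x+i0)$. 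The same count gives univalence of $G_\mu$ on $\mathbb{C}^+$, so that $F_\mu^{\langle-1\rangle}$ makes sense. The sign $\operatorname{Im}\phi_\mu\le0$ is then forced by a barrier argument in the spirit of \cite{BBLS11}: were there a point of $\mathbb{C}^+$ at which $\operatorname{Im}\phi_\mu$ first vanished, the ODE would make its gradient point into $\mathbb{C}^+$, a contradiction.

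It then remains to reduce the non-beta cases and to verify the hypotheses case by case. By relation~(5) and affine invariance of FID, $\mathbf{u}_p$ is FID iff $\bm\beta_{p+1/2,p+1/2}$ is; combined with (\ref{thm1-1})(i) this gives $\mathbf{u}_p$ FID for $p\ge1$, the case conjectured in \cite[Remark 4.4]{AP10}. For $\mathbf{t}_q$ one should \emph{not} simply invoke relation~(6): passing from $\bm\beta'_{1/2,q-1/2}$ to its symmetric square root $\mathbf{t}_q$ does not visibly preserve FID, so part~(5) is to be obtained by applying the general proposition directly to the ${}_2F_1$ representing $G_{\mathbf{t}_q}$; the intervals $[2n+\frac14,2n+2]$ are exactly the $q$-ranges on which the sign condition survives, and they contain the integers at which FID was already known \cite{H}. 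In the verification for $\bm\beta_{p,q}$ and $\bm\beta'_{p,q}$, the ranges $p,q\ge\frac32$ (resp.\ $p\ge\frac32$) are the clean ones, where $x\mapsto G_\mu(x+i0)$ is a simple arc; the ranges $0<p\le\frac12$, $p+q\ge2$ and their mirror images, where the density is unbounded at an endpoint, are more delicate, and I expect to anchor them at the slice $p+q=2$ --- where \cite{AHb} supplies an explicit Voiculescu transform and where \eqref{eq001} lives --- and propagate.

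For the negative statements (2) and (4) the plan is to exploit a local obstruction: from the free L\'evy--Khintchine form of $\phi_\mu$ and Stieltjes inversion one derives a pointwise differential inequality that the density of every FID law must satisfy on the interior of its support (morally as in Lehner's negative-Hankel-determinant obstruction for the exponential law, but sensitive to local shape rather than to moments). Since $\bm\beta_{p,q}$ has density $\sim x^{p-1}$ near $0$ and $\sim(1-x)^{q-1}$ near $1$, and $\bm\beta'_{p,q}\sim x^{p-1}$ near $0$, one checks the inequality fails when $0<p,q\le1$ or when $p$ or $q$ lies in $\mathcal{I}$. The hardest part of the whole argument is the general proposition of Section~\ref{sec2}: turning a first-order ODE for $G_\mu$ into ``$\phi_\mu$ extends to $\mathbb{C}^+$ with $\operatorname{Im}\phi_\mu\le0$'' uniformly over all the parameter sub-cases --- above all where the density blows up and the boundary curve fails to be a simple arc --- together with identifying the exact admissible local exponents needed for the negative results.
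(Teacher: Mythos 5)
Your outline of the positive parts is in the right spirit --- the paper does combine the first-order hypergeometric ODE for $G_\mu$ (to rule out critical points where $G_\mu\in\comp^-$, cf.\ Lemma \ref{lem3}) with a complex-analytic continuation argument showing that $F_\mu^{-1}$ extends univalently to $\comp^+$ (the classes $\mathcal{UI}$, $\mathcal{UI}_s$ and Proposition \ref{prop91}). But two of your steps have no mechanism behind them. First, the sign $\operatorname{Im}\phi_\mu\le 0$ is not obtained in the paper from any ``gradient of the ODE points into $\comp^+$'' barrier; it falls out of the geometry: once $F_\mu$ is shown to be a bijection from some domain $\Omega\supset\comp^+$ onto $\comp^+$, every preimage of $z\in\comp^+$ lies either in $\comp^+$ (where $\operatorname{Im}F_\mu\ge\operatorname{Im}z$) or in $\comp^-\cup\real$, and in both cases $\operatorname{Im}\phi_\mu(z)\le 0$. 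The real work, which your ``argument-principle count'' glosses over, is precisely establishing that bijection on an \emph{unbounded} domain: injectivity of $G_\mu$ on the boundary curve does not imply bijectivity inside (the paper gives the counterexample $z\mapsto z^2$), so one must lift the curves $c_t$ one by one using conditions (\ref{diffA}) and (\ref{barrierA}). Second, your plan to handle $0<p\le\frac12$, $p+q\ge2$ by ``anchoring at the slice $p+q=2$ and propagating'' cannot work as stated: free infinite divisibility is not stable under deformation of $(p,q)$ (indeed $\bm\beta_{p,2-p}$ is \emph{not} FID for $\frac12<p<\frac32$), and the paper instead runs the same curve construction in these cases, started at $z=0$ using the asymptotics $G_{\bm\beta_{p,q}}(z)\sim -\frac{\pi}{B(p,q)\sin\pi p}(-z)^{p-1}$ from the hypergeometric connection formulas.

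The negative parts are where the proposal genuinely fails. There is no pointwise differential inequality on the density of a FID law that would carve out the set $\mathcal{I}$: $\mathcal{I}$ is a union of infinitely many intervals accumulating at $\frac12$ and $\frac32$, and the actual obstruction (Theorem \ref{thm90}) is not local-in-the-interior at all --- it uses the one-sided edge behaviour $p(x)\sim c(x-x_0)^{\alpha-1}$ with $p\equiv 0$ on the other side, continues $G_\mu$ analytically around $x_0$ onto a helix-like Riemann surface through a number of sheets determined by which component of $\mathcal{I}$ contains $\alpha$, and locates a point $w_\alpha>0$ near $x_0$ with $\phi_\mu(w_\alpha)\in\comp^+$, contradicting the Pick-function characterization. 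The condition $\alpha\in\mathcal{I}$ is exactly what makes the relevant preimage land in $\comp^+$ after projection; nothing resembling Lehner's Hankel obstruction produces this. Moreover your plan does not cover case (2)(i), $0<p,q\le1$: here the exponents can lie in $(0,\frac12]$, outside $\mathcal{I}$, and the paper needs a separate argument via subordination functions --- $F_{\mu^{\boxplus t}}$ extends continuously and injectively to $\comp^+\cup\real$ for a FID $\mu$, whereas $F_{\bm\beta_{p,q}}$ vanishes at both $0$ and $1$ when the density is discontinuous at both endpoints. You would need to supply both of these mechanisms before the negative assertions could be considered proved.
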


\begin{figure}[htpb]
\begin{minipage}{0.5\hsize}
\begin{center}
\includegraphics[width=60mm,clip]{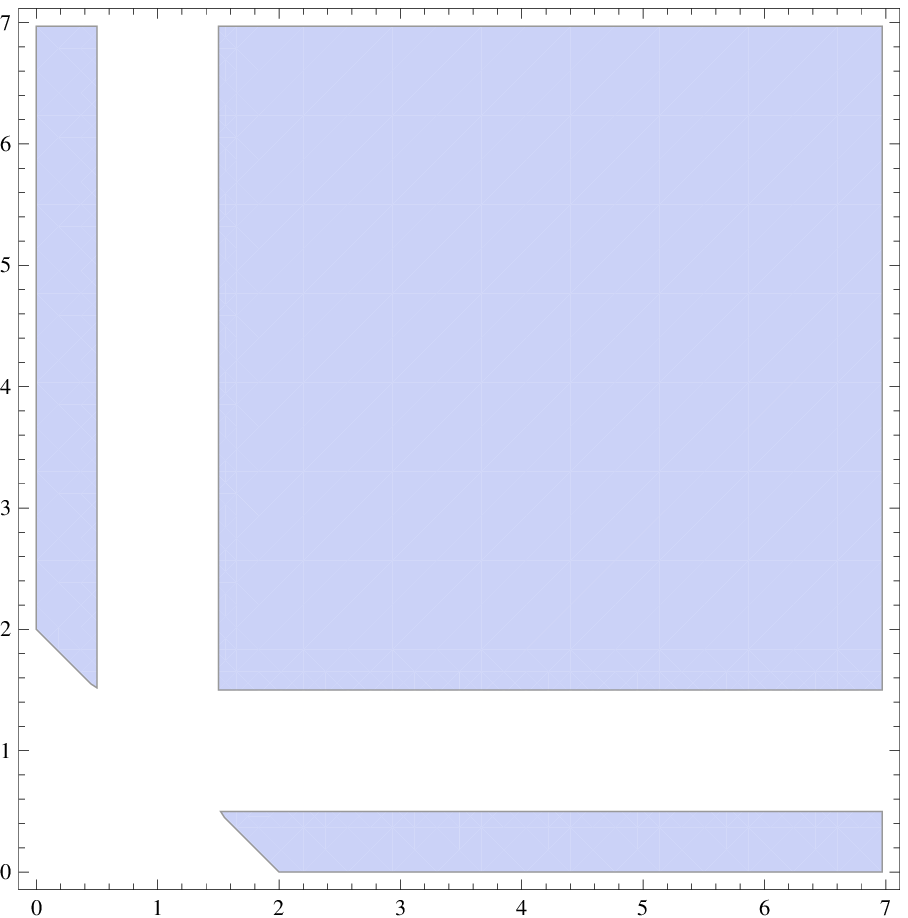}
\caption{The region for free infinite divisibility of $\bm{\beta}_{p,q}$}\label{dia1}
\end{center}
  \end{minipage}
\begin{minipage}{0.5\hsize}
\begin{center}
\includegraphics[width=60mm,clip]{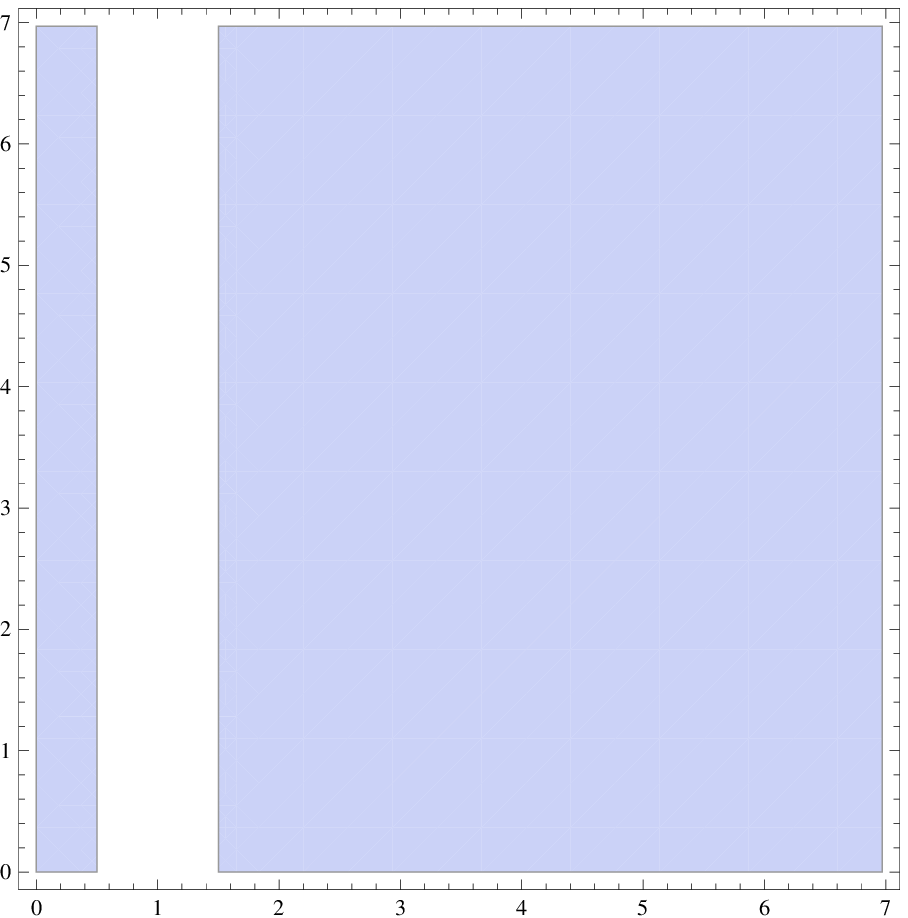}
\caption{The region for free infinite divisibility of $\bm{\beta}'_{p,q}$}\label{dia2}
\end{center}
\end{minipage}
\end{figure}

The assertions (2) and (4) follow from Theorem \ref{thm90}, a general criterion for a probability measure not to be FID. It roughly says, if a probability measure has a local density function $p(x)$ around a point $x_0$, and if $p(x)|_{(x_0-\delta,x_0+\delta)}$ is \emph{close to} the power function 
$$
c (x-x_0)^{\alpha-1}1_{(x_0,x_0+\delta)}(x)
$$ 
for some $c,\delta>0$ and $\alpha \in\mathcal{I}$, then that measure is not FID.  

Theorem \ref{thm1} has the following consequences.  

\begin{cor}\label{cor1}
\begin{enumerate}[\rm(1)]
\item The gamma distribution $\bm{\gamma}_p$ is FID if $p \in (0, \frac{1}{2}]\cup [\frac{3}{2}, \infty)$, and is not FID if $p\in \mathcal{I}$. 
\item The inverse gamma distribution $\bm{\gamma}^{-1}_p$ is FID for any $p>0$. In particular, the classical positive stable law with stability index $1/2$ is FID. 
\item The ultraspherical distribution $\mathbf{u}_p$ is FID for $p \in [1,\infty)$ and is not FID for $p\in(-\frac{1}{2},1)$. 
\end{enumerate}
\end{cor}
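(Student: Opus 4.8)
The plan is to read off every item of Corollary~\ref{cor1} from Theorem~\ref{thm1}, the distributional relations (1)--(6) recorded above, Theorem~\ref{thm90} for the negative assertions, and two structural facts that I use freely: the class of FID measures is stable under affine maps $x\mapsto ax+b$, and it is closed under weak convergence. Modulo these, each claim reduces to picking a suitable member of the beta or beta prime family and invoking the main theorem.

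For the gamma distribution, the positive part uses relation (2), $D_q\bm{\beta}_{p,q}\to\bm{\gamma}_p$ weakly as $q\to\infty$. If $p\ge\tfrac32$, then $\bm{\beta}_{p,q}$ is FID for every $q\ge\tfrac32$ by Theorem~\ref{thm1}, part~(1), case~(i); hence $D_q\bm{\beta}_{p,q}$ is FID, and letting $q\to\infty$ makes $\bm{\gamma}_p$ FID. If $0<p\le\tfrac12$, the same argument works with case~(ii), valid once $q\ge 2-p$. For the negative part one applies Theorem~\ref{thm90} at the hard edge $x_0=0$: near $0$ the density $\frac{1}{\Gamma(p)}x^{p-1}e^{-x}1_{(0,\infty)}(x)$ equals $\frac{1}{\Gamma(p)}x^{p-1}\bigl(1+O(x)\bigr)$, so it is close to the power function $c\,x^{\alpha-1}1_{(0,\delta)}(x)$ with $c=1/\Gamma(p)$ and $\alpha=p$, and $\alpha\in\mathcal I$ then forbids free infinite divisibility. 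For the inverse gamma one must use relation (3) rather than relation (4), since $x\mapsto 1/x$ need not preserve FID: relation (3) gives $D_{1/q}\bm{\beta}'_{q,p}\to\bm{\gamma}^{-1}_p$ weakly as $q\to\infty$, and the first parameter of $\bm{\beta}'_{q,p}$ is $q$, which exceeds $\tfrac32$ for all large $q$, so Theorem~\ref{thm1}, part~(3), makes $\bm{\beta}'_{q,p}$ FID for all large $q$ whatever $p>0$ is; dilating and passing to the weak limit shows $\bm{\gamma}^{-1}_p$ is FID for every $p>0$, in particular the L\'evy distribution $\bm{\gamma}^{-1}_{1/2}$.

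For the ultraspherical distribution, relation (5) exhibits $\mathbf{u}_p$ as the image of $\bm{\beta}_{p+1/2,\,p+1/2}$ under $x\mapsto 2x-1$, so $\mathbf{u}_p$ is FID whenever $\bm{\beta}_{p+1/2,\,p+1/2}$ is and fails to be FID whenever $\bm{\beta}_{p+1/2,\,p+1/2}$ does. By Theorem~\ref{thm1}, part~(1), case~(i), the former happens as soon as $p+\tfrac12\ge\tfrac32$, i.e.\ $p\ge1$; by part~(2), case~(i), the latter happens when $0<p+\tfrac12\le1$, i.e.\ $p\in(-\tfrac12,\tfrac12]$. This leaves the band $p\in(\tfrac12,1)$, where $\bm{\beta}_{p+1/2,\,p+1/2}$ has both parameters in $(1,\tfrac32)$ and $\mathcal I\cap(1,\tfrac32)$ is only a gappy union of intervals, so Theorem~\ref{thm1}(2) is not enough; here one works directly with $\mathbf{u}_p$ and applies the local non-FID criterion at the edge $x=1$, where the density behaves like $c(1-x)^{p-1/2}$ with exponent $p-\tfrac12\in(0,\tfrac12)$, in a sharper symmetry-aware form that also registers the mirror singularity at $x=-1$, ruling out free infinite divisibility over the whole band.

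The step I expect to be the main obstacle is precisely this last one. The crude version of Theorem~\ref{thm90} quoted above only forbids exponents $\alpha$ lying in $\mathcal I$, whereas the ultraspherical statement needs the forbidden exponents to fill all of $(0,\tfrac12)$ (equivalently $\alpha=p+\tfrac12$ to fill $(1,\tfrac32)$); one therefore cannot simply quote the beta case and must exploit the extra symmetry of $\mathbf{u}_p$ genuinely. Everything else is routine bookkeeping: checking that the smooth perturbing factors --- $e^{-x}$ for $\bm{\gamma}_p$ and $(1+x)^{p-1/2}$ for $\mathbf{u}_p$ near $x=1$ --- are tame enough to fall under the hypotheses of the local criterion, and that affine maps and weak limits do preserve FID.
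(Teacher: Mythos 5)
Your handling of parts (1) and (2) and of the positive half of (3) is correct and is essentially the paper's own route: the gamma and inverse gamma cases follow from the weak limits $D_q\bm{\beta}_{p,q}\to\bm{\gamma}_p$ and $D_{1/q}\bm{\beta}'_{q,p}\to\bm{\gamma}^{-1}_p$ together with Theorem \ref{thm1} and the closedness of the FID class under dilations and weak convergence (and you are right that one must go through $\bm{\beta}'_{q,p}$ rather than through $X\mapsto 1/X$); the non-FID claim for $\bm{\gamma}_p$, $p\in\mathcal I$, is exactly an application of Theorem \ref{thm90} at $x_0=0$ with $f(x)=e^{-x}-1$; and $\mathbf{u}_p\in$ FID for $p\ge 1$ is the affine image of $\bm{\beta}_{p+\frac12,p+\frac12}$ with both parameters $\ge\frac32$.

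The genuine gap is in the negative half of (3) on the band $p\in(\frac12,1)$. You correctly observe that there $\bm{\beta}_{p+\frac12,p+\frac12}$ has both parameters in $(1,\frac32)$ and that $\mathcal I\cap(1,\frac32)$ is only a gappy union of intervals, so neither Theorem \ref{thm1}(2) nor Theorem \ref{thm90} applies to the whole band. But your proposed fix --- a ``sharper symmetry-aware form'' of the local criterion at $x=\pm1$ that would forbid \emph{every} exponent $\alpha-1\in(0,\frac12)$ --- is not something you construct, and it is not available: the paper itself records as an open Problem whether Theorem \ref{thm90} extends to arbitrary $\alpha\in(\frac12,\frac32)$, and nothing in the argument of Theorem \ref{thm90} (which hinges on the winding of $(-z)^{\alpha-1}$ placing a preimage on a specific sheet of the helix $\mathcal S$, hence on $\alpha$ lying in $\mathcal I$) obviously improves by exploiting the mirror singularity. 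The paper does not prove this band internally at all; it cites Arizmendi--P\'erez-Abreu \cite[Corollary 4.1]{AP10}, which shows $\bm{\beta}_{p,p}$ is not FID for $0<p<\frac32$, equivalently $\mathbf{u}_p$ is not FID for $p\in(-\frac12,1)$. To close your argument you should either quote that result or actually supply the claimed strengthening, which is a substantive new theorem rather than routine bookkeeping.
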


Corollary \ref{cor1}(1), (2) follow from limits of $\bm{\beta}_{p,q}$ and $\bm{\beta}'_{p,q}$ respectively. The assertion on non free infinite divisibility of $\bm{\gamma}_p$ is not a consequence of Theorem \ref{thm1}, but of Theorem \ref{thm90}. 
Corollary \ref{cor1}(3) for $p\in[1,\infty)$ is proved via an affine transformation of $\beta_{p+\frac{1}{2},p+\frac{1}{2}}$, and non free infinite divisibility is known in \cite[Corollary 4.1]{AP10}. This result is a positive solution of the conjecture of Arizmendi and P\'erez-Abreu \cite[Remark 4.4]{AP10}.

This paper is organized as follows. In Section \ref{sec22}, new sufficient conditions for free infinite divisibility are given, as well as the exposition of basic complex analysis in free probability. 

In Section \ref{sec2}, we express the Cauchy transforms of beta and beta prime distributions in terms of the Gauss hypergeometric function. 
We gather explicit Cauchy transforms of beta and beta prime distributions. The measures $\widetilde{\bm{\beta}}_a:=\bm{\beta}_{a, 1-a}$ and $\widetilde{\bm{\beta}}'_{a}(dx):=\bm{\beta}'_{1-a,a}(dx-1)$ are shown to satisfy
\begin{equation*}
\widetilde{\bm{\beta}}_a \mrhd \widetilde{\bm{\beta}}_b = \widetilde{\bm{\beta}}_{ab}, ~~\widetilde{\bm{\beta}}'_a \mrhd \widetilde{\bm{\beta}}'_b = \widetilde{\bm{\beta}}'_{ab}, ~~0<a,b<1, 
\end{equation*}
where $\dismrhd$ is \emph{multiplicative monotone convolution} \cite{B05}.

In Section \ref{sec3}, we prove the free infinite divisibility of beta and beta prime distributions as mentioned in Theorem \ref{thm1}. For that purpose, 
we establish first-order differential equations for the Cauchy transforms which enable us to use the sufficient conditions introduced in Section \ref{sec22}. 

In Section \ref{subsec12}, the general result for non free infinite divisibility (explained after Theorem \ref{thm1}) is shown, and it is applied to $\bm{\beta}_{p,q}, \bm{\beta}_{p,q}', \bm{\gamma}_{p}$. 

In Section \ref{st}, the free infinite divisibility for the Student distribution is proved. We also mention an easy proof of the free infinite divisibility of the Gaussian distribution in Remark \ref{gauss}.

In Section \ref{fdi}, we will provide a method for computing the free divisibility indicator of a symmetric measure and show that 
ultraspherical distributions and t-distributions mostly have free divisibility indicators equal to 1. Also the Gaussian distribution has the value 1.

\section{Free infinite divisibility}\label{sec22}
\subsection{Preliminaries}

\textbf{1.\ Tools from complex analysis.} Let $\comp^+$, $\comp^-$, $\mathbb{H}^+$ and $\mathbb{H}^-$ be the upper half-plane, lower half-plane, right half-plane and left half-plane, respectively.  
Given a Borel probability measure $\mu$ on $\real$, let $G_\mu$ be its \emph{Cauchy transform} defined by 
$$
G_\mu(z) := \int_{\real}\frac{1}{z-x}\,\mu(dx), ~~z\in \comp^+. 
$$
Its reciprocal $\displaystyle F_\mu(z):=\frac{1}{G_\mu(z)}$ is called the \emph{reciprocal Cauchy transform} of $\mu$.
When the Cauchy transform is defined in $\comp \setminus \real$, it is denoted as 
$$
\widetilde{G}_\mu(z) := \int_{\real}\frac{1}{z-x}\,\mu(dx),~~z\in\comp \setminus \real. 
$$
For a random variable $X \sim \mu$, we may write $G_X,\widetilde{G}_X$ instead of $G_\mu,\widetilde{G}_\mu$ respectively.

A measure $\mu$ can be recovered from $G_\mu$ or $\widetilde{G}_\mu$ by using the \emph{Stieltjes inversion formula}~\cite[Page 124]{A65}:  
\begin{equation}\label{eqtau}
\mu([a,b]) = -\frac{1}{\pi} \lim_{y \searrow 0}\int_{a}^b\text{Im}\, G_\mu(x+iy)dx = \frac{1}{\pi} \lim_{y \searrow 0}\int_{a}^b\text{Im}\, \widetilde{G}_\mu(x-iy)dx 
\end{equation}
for all continuity points $a, b$ of $\mu$. In particular, if the functions $f_\mu^y(x):=-\frac{1}{\pi}\text{Im}\,G_\mu(x+iy)$ converge uniformly to a continuous function $f_\mu(x)$ as $y \searrow 0$ on an interval $[a,b]$, then $\mu$ is absolutely continuous on $[a,b]$ with density $f_\mu(x)$. 
Atoms can be identified by  the formula   $\mu(\{x \})= \lim_{y \searrow 0} iy G_\mu(x+iy)$ for any $x \in \real$.

Basic properties of $G_\mu$ and $F_\mu$ are collected below; see \cite{M92} for (\ref{p2}) and \cite{BV93} for (\ref{p3}), (\ref{p4}).  
\begin{prop}\label{prop08}
\begin{enumerate}[\rm(1)]

\item\label{p1} The reciprocal Cauchy transform $F_\mu$ is an analytic map of $\comp^+$ to $\comp^+$. 

\item\label{p2} $F_\mu$ satisfies $\text{\normalfont Im}\,F_\mu(z) \geq  \text{\normalfont Im}\,z$ for $z \in \comp^+.$ If there exists $z \in \comp^+$ such that $\text{\normalfont Im}\,F_\mu(z) =  \text{\normalfont Im}\,z$, then $\mu$ must be a delta measure $\delta_a$. 

\item\label{p3} For any $\varepsilon, \lambda>0$, there exists $M>0$ such that  $|G_\mu(z)-\frac{1}{z}|\leq \frac{\varepsilon}{|z|}$ and $|F_\mu(z)-z|\leq \varepsilon |z|$ for $z \in \Gamma_{\lambda,M}$, where 
$$
\Gamma_{\lambda, M}:=\{z \in \mathbb{C}^+: \text{\normalfont Im}\,z >M,~ |\text{\normalfont Re}\,z| <\lambda \text{\normalfont Im}\,z\}. 
$$  
\item\label{p4} For any $0<\varepsilon< \lambda$, there exists $M>0$ such that $F_\mu$ is univalent in $\Gamma_{\lambda, M}$ and $F_\mu(\Gamma_{\lambda, M}) \supset \Gamma_{\lambda-\varepsilon, (1+\varepsilon)M}$, and so the inverse map $F_\mu^{-1}: \Gamma_{\lambda-\varepsilon, (1+\varepsilon)M}\to \comp^+$ exists such that $F_\mu \circ F_\mu^{-1}=\text{\normalfont Id}$ in $\Gamma_{\lambda-\varepsilon, (1+\varepsilon)M}$. 
\item\label{p5} If $\mu$ is symmetric, then $\text{\normalfont Im}\, G_\mu(x+iy)=\text{\normalfont Im}\, G_\mu(-x+iy)$ and $\text{\normalfont Re}\, G_\mu(x+iy)=-\text{\normalfont Re}\, G_\mu(-x+iy)$ for $x \in \real,~y>0$. In particular,   
$G_\mu(i(0,\infty)) \subset i(-\infty,0)$.  
\end{enumerate}
\end{prop}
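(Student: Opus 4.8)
\emph{Plan of proof.} I would treat the five assertions in turn. Each rests only on the defining integral $G_\mu(z)=\int_{\real}(z-x)^{-1}\,\mu(dx)$ together with standard complex analysis, so the task is to organise elementary estimates rather than to find a new idea.

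Assertions (\ref{p1}), (\ref{p2}) and (\ref{p5}) are direct computations. For (\ref{p1}): for $z\in\comp^+$ the kernel $x\mapsto(z-x)^{-1}$ is analytic in $z$ and bounded by $(\operatorname{Im}\,z)^{-1}$, so differentiating under the integral sign shows $G_\mu$ is analytic on $\comp^+$; moreover $\operatorname{Im}G_\mu(z)=-\operatorname{Im}\,z\int_{\real}|z-x|^{-2}\,\mu(dx)<0$, so $G_\mu$ maps $\comp^+$ into $\comp^-$ and in particular never vanishes there, whence $F_\mu=1/G_\mu$ is analytic on $\comp^+$ with $\operatorname{Im}F_\mu(z)=-\operatorname{Im}G_\mu(z)/|G_\mu(z)|^2>0$. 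For (\ref{p2}) the same two formulas suffice: one has $-\operatorname{Im}G_\mu(z)/\operatorname{Im}\,z=\int_{\real}|z-x|^{-2}\,\mu(dx)$, while the Cauchy--Schwarz inequality against the probability measure $\mu$ gives $|G_\mu(z)|^2\le\int_{\real}|z-x|^{-2}\,\mu(dx)$; dividing yields $\operatorname{Im}F_\mu(z)\ge\operatorname{Im}\,z$, and equality forces equality in Cauchy--Schwarz, i.e.\ $x\mapsto(z-x)^{-1}$ is $\mu$-almost everywhere constant, i.e.\ $\mu$ is a point mass. For (\ref{p5}): with $z=x+iy$ and $y>0$, substituting $t\mapsto-t$ in $\operatorname{Re}G_\mu(x+iy)=\int_{\real}\frac{x-t}{(x-t)^2+y^2}\,\mu(dt)$ and in $\operatorname{Im}G_\mu(x+iy)=-y\int_{\real}\frac{1}{(x-t)^2+y^2}\,\mu(dt)$ gives $\operatorname{Re}G_\mu(-x+iy)=-\operatorname{Re}G_\mu(x+iy)$ and $\operatorname{Im}G_\mu(-x+iy)=\operatorname{Im}G_\mu(x+iy)$; putting $x=0$, the first identity forces $\operatorname{Re}G_\mu(iy)=0$ and the second formula shows $\operatorname{Im}G_\mu(iy)<0$.

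For (\ref{p3}), I would write $G_\mu(z)-z^{-1}=\int_{\real}\frac{x}{z(z-x)}\,\mu(dx)$, so that everything reduces to showing $\int_{\real}\frac{|x|}{|z-x|}\,\mu(dx)\to0$ as $z\to\infty$ inside $\Gamma_{\lambda,M}$. Maximising $x^2/((\operatorname{Re}\,z-x)^2+(\operatorname{Im}\,z)^2)$ over $x\in\real$ gives the uniform bound $\frac{|x|}{|z-x|}\le\frac{|z|}{\operatorname{Im}\,z}\le\sqrt{1+\lambda^2}$ on the cone, while $\frac{|x|}{|z-x|}\le\frac{R}{\operatorname{Im}\,z}\le\frac{R}{M}$ whenever $|x|\le R$. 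Splitting the integral at $|x|=R$, choosing $R$ so that $\mu(\{|x|>R\})$ is small and then $M$ large, gives $|G_\mu(z)-z^{-1}|\le\varepsilon/|z|$ on $\Gamma_{\lambda,M}$. The estimate for $F_\mu$ then follows because $|zG_\mu(z)-1|\le\varepsilon$, so $F_\mu(z)-z=z\,(1-zG_\mu(z))/(zG_\mu(z))$ has modulus at most $|z|\,\varepsilon/(1-\varepsilon)$; shrinking $\varepsilon$ at the outset finishes it.

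Assertion (\ref{p4}) is the delicate one, and is the step I expect to be the main obstacle. I would put $h:=F_\mu-\operatorname{id}$ and apply (\ref{p3}) with a slightly wider aperture than $\lambda$ and a smallness parameter $\delta$ to be chosen, so that $|h(z)|\le\delta|z|$ on a wider truncated cone containing $\Gamma_{\lambda,M}$ once $M$ is large. Around each $z\in\Gamma_{\lambda,M}$ there is then a disc of radius comparable to $\operatorname{Im}\,z$ lying in that wider cone, so Cauchy's estimate makes $|F_\mu'(z)-1|=|h'(z)|$ as small as desired by shrinking $\delta$; hence $\operatorname{Re}F_\mu'>0$ on the convex set $\Gamma_{\lambda,M}$, and the Noshiro--Warschawski theorem gives univalence there. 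For the range inclusion I would fix $w\in\Gamma_{\lambda-\varepsilon,(1+\varepsilon)M}$ and run a contraction-mapping argument on $T(\zeta):=w-h(\zeta)$: the two constraints $|\operatorname{Re}\,w|<(\lambda-\varepsilon)\operatorname{Im}\,w$ and $\operatorname{Im}\,w>(1+\varepsilon)M$ keep $w$ at distance comparable to $|w|$ from $\comp^+\setminus\Gamma_{\lambda,M}$, so for $\delta$ small in terms of $\lambda$ and $\varepsilon$ the map $T$ carries the closed disc $\overline{B(w,\rho)}$, with $\rho$ comparable to $\delta|w|$, into itself and contracts it; its fixed point $z^{\ast}\in\Gamma_{\lambda,M}$ satisfies $F_\mu(z^{\ast})=w$. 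The real work throughout (\ref{p4}) is the bookkeeping of the truncation parameters --- how small the $\varepsilon$ of (\ref{p3}) must be, and how large $M$ must be, so that the shrunken cone $\Gamma_{\lambda-\varepsilon,(1+\varepsilon)M}$ genuinely fits inside $F_\mu(\Gamma_{\lambda,M})$ --- which is precisely what the estimates of \cite{BV93} supply.
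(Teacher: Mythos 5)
The paper gives no proof of this proposition at all—it simply cites \cite{M92} for (2) and \cite{BV93} for (3) and (4)—and your blind proof is correct, reproducing in substance the standard arguments from those references: the Cauchy--Schwarz computation for (2), the truncation of $\int |x|/|z-x|\,\mu(dx)$ at a level $R$ followed by the sharp bound $\sup_x |x|/|z-x| = |z|/\mathrm{Im}\,z$ for (3), and the Cauchy-estimate, Noshiro--Warschawski and contraction-mapping argument on nested cones for (4). Parts (1) and (5) are the routine computations from the integral representation, also handled correctly.
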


In addition, the following property is used in Section \ref{st}. 
\begin{lem}[\cite{BH}, Lemma 3.2]\label{lem03}
If a probability measure $\mu$ has a density $p(x)$ such that $p(x) = p(-x)$, $p'(x) \leq 0$ for a.e.\ $x > 0$ and $\lim_{x\to \infty}  p(x)\log x =0$, then $\text{\normalfont Re}\, G_\mu(x+yi)>0$ for $x,y>0$. 
 \end{lem}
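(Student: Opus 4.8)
Fix $z=x_0+iy_0$ with $x_0,y_0>0$, so that
$$
\mathrm{Re}\,G_\mu(z)=\int_{\mathbb R}\frac{x_0-x}{(x_0-x)^2+y_0^2}\,p(x)\,dx .
$$
The plan is to fold this integral onto $(0,\infty)$ using $p(-x)=p(x)$, which gives $\mathrm{Re}\,G_\mu(z)=\int_0^\infty K(x)\,p(x)\,dx$ with
$$
K(x):=\frac{x_0-x}{(x_0-x)^2+y_0^2}+\frac{x_0+x}{(x_0+x)^2+y_0^2}=\frac{2x_0\bigl(|z|^2-x^2\bigr)}{\bigl((x_0-x)^2+y_0^2\bigr)\bigl((x_0+x)^2+y_0^2\bigr)},
$$
the last equality being an elementary expansion. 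Two features of $K$ drive the proof. First, since $x_0>0$, the kernel is strictly positive on $(0,|z|)$, vanishes at $x=|z|$, and is strictly negative on $(|z|,\infty)$. Second, $K$ is continuous on $[0,\infty)$ (its denominator is bounded below by $y_0^4>0$) and $K(x)=O(x^{-2})$ as $x\to\infty$, so $K\in L^1(0,\infty)$; moreover the substitutions $u=x_0\mp x$ give $\int_0^\infty K(x)\,dx=\lim_{R\to\infty}\tfrac12\log\frac{(x_0+R)^2+y_0^2}{(x_0-R)^2+y_0^2}=0$.

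Now set $r:=|z|$. The density $p$ is non-increasing on $(0,\infty)$, so $p(x)-p(r)\ge 0$ on $(0,r)$ and $p(x)-p(r)\le 0$ on $(r,\infty)$; matching this against the sign of $K$ makes $K(x)\bigl(p(x)-p(r)\bigr)\ge 0$ throughout $(0,\infty)$. Since $\int_0^\infty K\,p$ converges absolutely ($K$ is bounded near $0$ with $p$ integrable there, and $|Kp|\lesssim p$ on $[1,\infty)$) and $\int_0^\infty K=0$, we may write
$$
\mathrm{Re}\,G_\mu(z)=\int_0^\infty K(x)\,p(x)\,dx=\int_0^\infty K(x)\bigl(p(x)-p(r)\bigr)\,dx\ \ge\ 0 .
$$
Equality would force $p(x)=p(r)$ for a.e.\ $x>0$, i.e.\ $p$ constant on $(0,\infty)$; but a positive constant is not integrable on a half-line, contradicting $\int_{\mathbb R}p=1$. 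Hence $\mathrm{Re}\,G_\mu(z)>0$. (This is consistent with Proposition \ref{prop08}(\ref{p5}): on the boundary $x_0=0$ one has $K\equiv 0$, so $\mathrm{Re}\,G_\mu$ vanishes there.)

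An alternative route invokes the hypothesis $\lim_{x\to\infty}p(x)\log x=0$ directly. Since $z-x\in\mathbb C^+$ for real $x$, the principal branch of $\log(z-x)$ is analytic with $\frac{d}{dx}\bigl(-\log(z-x)\bigr)=(z-x)^{-1}$, so integration by parts gives $G_\mu(z)=\int_{\mathbb R}p'(x)\log(z-x)\,dx$, the boundary terms at $\pm\infty$ vanishing precisely because $|p(x)\log(z-x)|\sim p(x)\log|x|\to 0$. Taking real parts and folding as before yields $\mathrm{Re}\,G_\mu(z)=\int_0^\infty p'(x)\log\frac{|z-x|}{|z+x|}\,dx$, which is $\ge 0$ because $|z+x|^2-|z-x|^2=4x_0x>0$ forces the logarithm to be negative while $p'\le 0$ a.e.; strictness follows as above. (In fact the $\log$ hypothesis is automatic here, since a non-increasing probability density on $(0,\infty)$ satisfies $xp(x)\to 0$.) In either version nothing is deep; the points that need care are the algebraic identity that exposes the single sign change of $K$, the fact that $\int_0^\infty K=0$ together with the integrability bookkeeping that legitimises the rearrangement, and---in the integration-by-parts variant---checking that the boundary contributions genuinely vanish even when $p$ blows up at the origin.
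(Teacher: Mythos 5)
Your proposal is correct. A remark on context first: the paper does not prove this lemma itself but cites it from \cite{BH}, and the presence of the hypothesis $\lim_{x\to\infty}p(x)\log x=0$ signals that the cited proof is the integration-by-parts one, i.e.\ essentially your second, ``alternative'' route: write $\mathrm{Re}\,G_\mu(z)=\int_0^\infty p'(x)\log\frac{|z-x|}{|z+x|}\,dx$ and use $|z-x|<|z+x|$ for $\mathrm{Re}\,z,x>0$ together with $p'\le 0$; the log hypothesis is exactly what kills the boundary term at infinity. Your primary argument --- folding onto $(0,\infty)$, the identity $K(x)=2x_0(|z|^2-x^2)/\bigl(((x_0-x)^2+y_0^2)((x_0+x)^2+y_0^2)\bigr)$ exposing the single sign change at $x=|z|$, the computation $\int_0^\infty K=0$, and the comparison of $p$ with $p(|z|)$ --- is a genuinely different and more elementary route. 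It buys two things: it needs only that $p$ is (a.e.\ equal to) a non-increasing function on $(0,\infty)$, with no absolute continuity or integration by parts, and it makes visible that the hypothesis $\lim_{x\to\infty}p(x)\log x=0$ is redundant, as you correctly note via $xp(x)\le 2\int_{x/2}^x p\to 0$. The integrability bookkeeping (absolute convergence of $\int_0^\infty Kp$ and of $\int_0^\infty K$, which legitimises subtracting $p(|z|)\int_0^\infty K=0$) and the strictness argument (equality would force $p$ constant a.e.\ on $(0,\infty)$, incompatible with $\int p=1$) are both handled correctly. The only caveat, which does not affect validity, is that in the integration-by-parts variant one should be explicit that ``$p'\le 0$ a.e.'' is being read as local absolute continuity of $p$ on $(0,\infty)$; your first argument avoids this issue entirely.
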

Note that some symmetric probability measures do not satisfy the property $\text{\normalfont Re}\, G_\mu(x+yi)>0$ for $x,y>0$. The Bernoulli law $\mathbf{b}=\frac{1}{2}(\delta_{-1}+\delta_1)$  has the Cauchy transform $G_\mathbf{b}(z)=\frac{z}{z^2-1}$ and so $G_\mathbf{b}(\frac{1}{2}e^{i \pi /4}) = \frac{\sqrt{2}}{17}(-3-4i).$ 

\vspace{15pt}

\textbf{2.\ Free convolution and freely infinitely divisible distributions.} 
If $X_{1},X_{2}$ are free random variables following probability distributions $\mu_1, \mu_2$ respectively, then the probability distribution of $X_1+X_2$ is denoted by $\mu_1 \boxplus \mu_2$ and is called the \emph{free additive convolution} of $\mu_1$ and $\mu_2$. Free additive convolution is characterized as follows \cite{BV93}. 
From Proposition \ref{prop08}(\ref{p4}), for any $\lambda>0$, there is $M>0$ such that the right compositional inverse map $F^{-1}_\mu$ exists in $\Gamma_{\lambda, M}$. 
Let $\phi_\mu(z)$ be the \emph{Voiculescu transform} of $\mu$ defined by 
\begin{equation}\label{eq44}
\phi_\mu(z) := F_\mu^{-1}(z)-z,~~z\in\Gamma_{\lambda,M}.  
\end{equation}
The free convolution $\mu \boxplus \nu$ is the unique probability measure such that 
$$
\phi_{\mu \boxplus \nu}(z) = \phi_\mu (z) + \phi_\nu(z)
$$
 in a common domain of the form $\Gamma_{\lambda', M'}$. 

Free convolution associates a basic class of probability measures, called \emph{freely infinitely divisible distributions} introduced in \cite{V86} for compactly supported probability measures and in \cite{BV93} for all probability measures. 
\begin{defi} A probability measure $\mu$ on $\real$ is said to be \textit{freely infinitely divisible} (or FID for short)  if for each $n\in \{1,2,3,\cdots\}$ there exists a probability measure $\mu_n$ such that
$$
\mu =\mu_n^{\boxplus n}:=\underbrace{\mu_n \boxplus \cdots \boxplus \mu_n.}_{\text{$n$ times}}
$$ 
The set of FID distributions is closed with respect to weak convergence \cite[Theorem 5.13]{BT06}.  FID distributions appear as the limits of infinitesimal arrays as in classical probability theory; see \cite{CG08}. 
\end{defi}
FID distributions are  characterized in terms of a complex analytic property of Voiculescu transforms. 
\begin{thm}[\cite{BV93}] \label{thm0}
For a probability measure $\mu$ on $\real$, the following are equivalent. 
\begin{enumerate}[\rm(1)] 
\item $\mu$ is FID. 
\item $-\phi_\mu$ extends to a Pick function, i.e.\ an analytic map from $\comp^+$ into $\comp^+ \cup \real$. 
%\item There exist $\gamma \in \real$ and a nonnegative finite measure $\tau$ such that   
%\begin{equation}\label{eq010}
%\phi_\mu(z)= \gamma + \int_{\real}\frac{1+xz}{z-x}\,\tau(dx),~~z \in \comp^+.  
%\end{equation}
\item For any $t>0$, there exists a probability measure $\mu^{\boxplus t}$ with the property $\phi_{\mu^{\boxplus t}}(z) = t\phi_\mu(z)$ in some $\Gamma_{\lambda,M}$. 
\end{enumerate}
\end{thm}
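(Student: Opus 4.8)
\medskip

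The plan is to run the cycle $(3)\Rightarrow(1)\Rightarrow(2)\Rightarrow(3)$, with $(1)\Rightarrow(2)$ carrying all the weight. The implication $(3)\Rightarrow(1)$ is immediate: taking $t=1/n$ gives a probability measure $\mu^{\boxplus 1/n}$ with $\phi_{(\mu^{\boxplus 1/n})^{\boxplus n}}=n\cdot\tfrac1n\phi_\mu=\phi_\mu$ on a common truncated cone, and since a probability measure is determined by its Voiculescu transform there, $(\mu^{\boxplus 1/n})^{\boxplus n}=\mu$.

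For $(1)\Rightarrow(2)$ I would derive the free L\'evy--Khintchine form of $\phi_\mu$. Suppose $\mu=\mu_n^{\boxplus n}$ for all $n$; after translating $\mu$ (which only shifts the argument of $\phi_\mu$ and preserves both (1) and (2)) we may assume the $\mu_n$ are suitably centered. The first step is that $\{\mu_n\}$ is infinitesimal, i.e.\ $\mu_n\to\delta_0$ weakly. The cleanest argument uses the subordination function of free convolution: $F_\mu=F_{\mu_n}\circ\omega_n$ on $\comp^+$ with $\omega_n=\tfrac1n\,\mathrm{Id}+\tfrac{n-1}{n}F_\mu$, so $\omega_n\to F_\mu$ and hence $F_{\mu_n}\to\mathrm{Id}$ on a cone whose size is uniform in $n$ (here one reuses the estimates of Proposition \ref{prop08}(\ref{p3})--(\ref{p4}), now applied to $\omega_n$), which forces $\mu_n\to\delta_0$ weakly since the Cauchy transforms converge to $1/z$. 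With infinitesimality in hand, apply the free limit theorem for infinitesimal triangular arrays to the array whose $n$-th row is $n$ copies of $\mu_n$: since $\mu_n^{\boxplus n}=\mu$ for all $n$, the measures $\tfrac{n x^2}{1+x^2}\,\mu_n(dx)$ converge weakly to a finite measure $\sigma$ and the centering constants converge to some $\gamma\in\real$, and
$$
\phi_\mu(z)=\gamma+\int_\real\frac{1+xz}{z-x}\,\sigma(dx)
$$
on a truncated cone. As the integrand has nonpositive imaginary part on $\comp^+$, this formula extends $-\phi_\mu$ to an analytic map $\comp^+\to\comp^+\cup\real$, which is (2).

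For $(2)\Rightarrow(3)$: by Proposition \ref{prop08}(\ref{p3})--(\ref{p4}), $\phi_\mu(z)=o(|z|)$ as $z\to\infty$ nontangentially, so the Pick extension $\Psi=-\phi_\mu$ satisfies $\Psi(iy)/(iy)\to0$; its Nevanlinna representation $\Psi(z)=a+bz+\int_\real\frac{1+xz}{x-z}\,\tau(dx)$ therefore has $b=0$, and rearranging recovers the L\'evy--Khintchine form $\phi_\mu(z)=\gamma+\int\frac{1+xz}{z-x}\,\tau(dx)$ with $\gamma=-a$. For $t>0$ the pair $(t\gamma,t\tau)$ again gives a function of this form, and it remains to realize $t\phi_\mu$ as the Voiculescu transform of a genuine probability measure $\mu^{\boxplus t}$; I would do this by a free compound-Poisson approximation: approximate $t\tau$ by finitely supported measures, realize each corresponding transform by an explicit finite free convolution of dilated free-Poisson laws and point masses, and pass to the weak limit using that the class of FID laws is weakly closed. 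This gives (3).

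The main obstacle is $(1)\Rightarrow(2)$, specifically the infinitesimality of $\{\mu_n\}$ and the free infinitesimal-array limit theorem behind the L\'evy--Khintchine representation: both demand uniform-in-$n$ analytic control of $F_{\mu_n}$ and $\phi_{\mu_n}$ on a non-shrinking cone, together with the tightness/compactness needed to read off the pair $(\gamma,\sigma)$ from the convergence of Cauchy transforms. The Nevanlinna bookkeeping in $(2)\Rightarrow(3)$, the compound-Poisson realization, and $(3)\Rightarrow(1)$ are comparatively routine.
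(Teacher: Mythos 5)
This theorem is quoted in the paper from \cite{BV93} and not proved there, so there is no in-paper argument to compare against; your outline has to be judged against the standard Bercovici--Voiculescu proof, and it matches it in all essentials: the cycle $(3)\Rightarrow(1)\Rightarrow(2)\Rightarrow(3)$, the free L\'evy--Khintchine representation as the bridge, the Nevanlinna representation with vanishing linear term for $(2)\Rightarrow(3)$, and the free compound Poisson approximation to realize $t\phi_\mu$. The one place where you genuinely deviate is the infinitesimality of the $n$-th roots: \cite{BV93} obtain it by showing that the domains of the $\phi_{\mu_n}$ contain a truncated cone $\Gamma_{\lambda,M}$ that can be chosen uniformly in $n$ (this is where they do delicate work), whereas you use the explicit subordination function $\omega_n=\tfrac1n\,\mathrm{Id}+\tfrac{n-1}{n}F_\mu$, which is globally defined on $\comp^+$ by an elementary formula and immediately forces $F_{\mu_n}\to\mathrm{Id}$ by normal families. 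That is a legitimately cleaner route (it uses a tool that postdates the original proof) and it sidesteps the uniform-cone issue entirely. The remaining weight sits exactly where you say it does: the quantitative approximation $\phi_\nu(z)\approx \gamma_\nu+\int\frac{1+xz}{z-x}\,\sigma_\nu(dx)$ for $\nu$ near $\delta_0$, uniformly on a fixed truncated cone, together with the tightness of $\{n\,\tfrac{x^2}{1+x^2}\mu_n(dx)\}$ --- this is the content of the infinitesimal-array limit theorem you invoke, and as long as you are willing to cite it (it is proved independently of the present theorem, e.g.\ in \cite{CG08}), your argument closes. I see no step that would fail.
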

Note that Pick functions are also crucial in the characterization of \emph{generalized gamma convolutions} (GGCs) in classical probability \cite{B92}.

\subsection{Sufficient conditions for free infinite divisibility}
When the Voiculescu transform does not have an explicit expression, the conditions in Theorem \ref{thm0} are difficult to check. In such a case, a subclass $\mathcal{UI}$ of FID measures has been exploited in the literature \cite{BBLS11,ABBL10,AB,AHa,AHb,BH,H}. We also introduce a variant of it. 
\begin{defi}\label{def1} \begin{enumerate}[\rm(1)]
\item A probability measure $\mu$ is said to be in class $\mathcal{UI}$ if  $F_\mu^{-1}$, defined in some $\Gamma_{\lambda,M}$, analytically extends to a univalent map in $\mathbb{C}^+$. $\mu \in \mathcal{UI}$ if and only if there is an open set $\Omega \subset \comp$, $\Omega \cap \Gamma_{\lambda,M} \neq \emptyset$ such that $F_\mu$ extends to an analytic bijection of $\Omega$ onto $\comp^+.$
\item A symmetric probability measure $\mu$ is said to be in class $\mathcal{UI}_s$ if: (a) there is $c \leq 0$ such that $F_\mu$ extends to a univalent map around $i(c,\infty)$ and maps $i(c,\infty)$ onto $i(0,\infty)$; (b) there is an open set $\widetilde{\Omega} \subset \comp^-\cup \mathbb{H}^+$ such that $\widetilde{\Omega} \cap \Gamma_{\lambda,M} \neq \emptyset$, where $ \Gamma_{\lambda,M}$ is the cone defined in the paragraph prior to (\ref{eq44}),  and that $F_\mu$ extends to an analytic bijection of $\widetilde{\Omega}$ onto $\comp^+ \cap \mathbb{H}^+$. 
\end{enumerate}
\end{defi}
\begin{rem}\label{rem56}
In \cite{AHb} we required $F_\mu$ to be univalent in $\comp^+$ in the definition of $\mu \in \mathcal{UI}$, but this automatically follows. If $F^{-1}_\mu$ is analytic in $\comp^+$, then $F^{-1}_\mu \circ F_\mu (z) =z$ for $z \in \comp^+$ by analyticity, so that $F_\mu$ should be univalent in $\comp^+$.    
\end{rem}
\begin{lem}\label{lem112}
If $\mu \in \mathcal{UI}$ or $\mu \in \mathcal{UI}_s$, then $\mu$ is FID. 
\end{lem}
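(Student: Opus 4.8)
The plan is to verify the characterization of free infinite divisibility in Theorem \ref{thm0}: it suffices to show that $-\phi_\mu$ extends to a Pick function, i.e.\ to an analytic map from $\comp^+$ into $\comp^+\cup\real$. Recall $\phi_\mu(z)=F_\mu^{-1}(z)-z$ on some $\Gamma_{\lambda,M}$, so $-\phi_\mu(z)=z-F_\mu^{-1}(z)$, and the content is to produce an analytic extension of $F_\mu^{-1}$ to all of $\comp^+$ with $\mathrm{Im}\,(z-F_\mu^{-1}(z))\ge 0$ there.

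First I would treat the case $\mu\in\mathcal{UI}$. By definition $F_\mu^{-1}$, initially defined on $\Gamma_{\lambda,M}$, extends to a univalent map $g$ on $\comp^+$; equivalently (as noted in Definition \ref{def1}(1)) there is an open $\Omega$ with $\Omega\cap\Gamma_{\lambda,M}\ne\emptyset$ on which $F_\mu$ restricts to an analytic bijection $\Omega\to\comp^+$, and $g=(F_\mu|_\Omega)^{-1}$. Since $F_\mu$ maps $\comp^+$ into $\comp^+$ (Proposition \ref{prop08}(\ref{p1})), we have $\Omega\subset\comp^+$, hence $g(\comp^+)=\Omega\subset\comp^+$. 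Now for $w\in\comp^+$ write $z=g(w)\in\Omega\subset\comp^+$, so $w=F_\mu(z)$, and Proposition \ref{prop08}(\ref{p2}) gives $\mathrm{Im}\,F_\mu(z)\ge\mathrm{Im}\,z$, i.e.\ $\mathrm{Im}\,w\ge\mathrm{Im}\,g(w)$, that is $\mathrm{Im}\,(w-g(w))\ge 0$. Since $g$ agrees with $F_\mu^{-1}$ on an open subset of $\Gamma_{\lambda,M}$, by analytic continuation $w-g(w)$ is the desired extension of $-\phi_\mu$ to a Pick function, and Theorem \ref{thm0} finishes this case.

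For $\mu\in\mathcal{UI}_s$ I would run essentially the same argument but patch together the two pieces in Definition \ref{def1}(2). Part (a) supplies, for some $c\le 0$, a univalent extension of $F_\mu$ on a neighborhood of the half-line $i(c,\infty)$ mapping it onto $i(0,\infty)$, hence a local inverse defined near $i(0,\infty)$; part (b) supplies an open $\widetilde\Omega\subset\comp^-\cup\mathbb{H}^+$ with $\widetilde\Omega\cap\Gamma_{\lambda,M}\ne\emptyset$ on which $F_\mu$ is an analytic bijection onto the quadrant $\comp^+\cap\mathbb{H}^+$. By symmetry of $\mu$, $G_\mu$ maps the positive imaginary axis into the negative imaginary axis (Proposition \ref{prop08}(\ref{p5})), so $F_\mu=1/G_\mu$ behaves symmetrically under $z\mapsto -\bar z$; using this reflection together with (a) and (b) one obtains an analytic inverse $g$ of $F_\mu$ defined on the closed right half-plane intersected with $\comp^+$ — namely on $\comp^+\cap\mathbb{H}^+$ from (b), on $i(0,\infty)$ from (a), and on $\comp^+\cap\mathbb{H}^-$ by the Schwarz-type reflection — consistently on overlaps since all branches agree with $F_\mu^{-1}$ on the common cone $\Gamma_{\lambda,M}$. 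This glued $g$ is analytic and univalent on $\comp^+$ with $g(\comp^+)\subset\comp^+$, and exactly as above Proposition \ref{prop08}(\ref{p2}) yields $\mathrm{Im}\,(w-g(w))\ge 0$ on $\comp^+$, so $-\phi_\mu$ extends to a Pick function and $\mu$ is FID.

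The main obstacle is the gluing in the $\mathcal{UI}_s$ case: one must check that the branch of $F_\mu^{-1}$ coming from the neighborhood of $i(c,\infty)$ in (a), the branch on the quadrant in (b), and its reflection across the imaginary axis all coincide on their overlaps and assemble into a single-valued analytic function on $\comp^+$. The key point making this work is that all these branches extend the same germ $F_\mu^{-1}$ on $\Gamma_{\lambda,M}$ (part (b) explicitly requires $\widetilde\Omega\cap\Gamma_{\lambda,M}\ne\emptyset$, and the cone is symmetric about the imaginary axis, which meets the domain in (a)), so by the identity theorem there is no monodromy; after that, the inequality $\mathrm{Im}\,F_\mu\ge\mathrm{Im}\,z$ is immediate and purely formal. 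The $\mathcal{UI}$ case has no such difficulty and is essentially a one-line consequence of Proposition \ref{prop08}(\ref{p1}) and (\ref{p2}).
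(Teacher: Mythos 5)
There is a genuine gap, and it occurs at the same spot in both cases: you assert that the image of the extended inverse lies in $\comp^+$ (``$\Omega\subset\comp^+$, hence $g(\comp^+)=\Omega\subset\comp^+$'' in the $\mathcal{UI}$ case, and ``$g(\comp^+)\subset\comp^+$'' in the $\mathcal{UI}_s$ case), and you derive this from the fact that $F_\mu$ maps $\comp^+$ into $\comp^+$. That inference is false. The set $\Omega$ on which the \emph{analytic extension} of $F_\mu$ is a bijection onto $\comp^+$ is in general strictly larger than the preimage of $\comp^+$ under $F_\mu|_{\comp^+}$, precisely because $F_\mu(\comp^+)$ is usually a proper subset of $\comp^+$; the extra points of $\Omega$ lie in $\comp^-\cup\real$. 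A concrete counterexample is the semicircle law: $F_{\mathbf{w}}^{-1}(z)=z+\frac{1}{z}$ sends $iy$ with $0<y<1$ to $i(y-\frac{1}{y})\in\comp^-$, so $\Omega=F_{\mathbf{w}}^{-1}(\comp^+)\not\subset\comp^+$. In the $\mathcal{UI}_s$ case the definition itself already tells you this: $\widetilde{\Omega}\subset\comp^-\cup\mathbb{H}^+$ explicitly permits points of $\comp^-$, and $i(c,\infty)$ with $c\le 0$ contains points outside $\comp^+$. (Your additional claim that the glued $g$ is univalent on $\comp^+$ in the $\mathcal{UI}_s$ case is also contradicted by the remark following the lemma --- it is only $2$-valent in general --- but univalence of $g$ is not needed for the Pick property, so that is a side issue.)

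Because of this, your application of Proposition \ref{prop08}(\ref{p2}) at the point $z=g(w)$ is not justified when $g(w)\notin\comp^+$: the inequality $\mathrm{Im}\,F_\mu(z)\ge\mathrm{Im}\,z$ is a statement about $z\in\comp^+$ only. The repair is exactly the case distinction the paper makes: for $w\in\comp^+$, either the preimage $z=g(w)$ lies in $\comp^+$, in which case $w=F_\mu(z)$ and Proposition \ref{prop08}(\ref{p2}) gives $\mathrm{Im}(z-w)\le 0$; or $z\in\comp^-\cup\real$ (this happens precisely when $w\notin F_\mu(\comp^+)$, using univalence of $F_\mu$ on $\comp^+$), in which case $\mathrm{Im}(z-w)\le 0$ holds trivially since $\mathrm{Im}\,z\le 0<\mathrm{Im}\,w$. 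The rest of your outline --- reducing to Theorem \ref{thm0}, and gluing the three branches in the $\mathcal{UI}_s$ case via their common agreement with $F_\mu^{-1}$ on $\Gamma_{\lambda,M}$ --- matches the paper and is fine.
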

\begin{proof}
The proof for $\mathcal{UI}$ is found in \cite{AHb,BBLS11}. Assume $\mu \in \mathcal{UI}_s$. We are able to define 
\begin{equation}\label{eq59}
F^{-1}_\mu(z):= 
\begin{cases}
F_\mu|_{\widetilde{\Omega}}^{-1}(z), & z \in \comp^+ \cap \mathbb{H}^+, \\
F_\mu|_{i(c,\infty)}^{-1}(z),&z \in i(0,\infty), \\
F_\mu|_{\widetilde{\Omega}^\ast}^{-1}(z), & z \in \comp^+ \cap \mathbb{H}^-,  \\
\end{cases}
\end{equation}
where $\widetilde{\Omega}^\ast:=\{-x+iy: x+iy \in \widetilde{\Omega} \}$ and $F_\mu|_A$ is the restriction of $F_\mu$ to a set $A$. This is well defined because each of $\widetilde{\Omega}$, $i(c,\infty)$ and $\widetilde{\Omega}^\ast$ has nonempty intersection with $\Gamma_{\lambda,M}$, and so each of $F_\mu|_{\widetilde{\Omega}} ^{-1}(z), F_\mu|_{i(c,\infty)}^{-1}(z)$ and $F_\mu|_{\widetilde{\Omega}^\ast}^{-1}(z)$ coincides with the original inverse (\ref{eq44}) in the common domain. Note that, as explained in Remark \ref{rem56}, $F_\mu$ is univalent in $\comp^+$. 

The remaining proof is similar to the case $\mu \in \mathcal{UI}$. Take $z \in \comp^+ \cap \mathbb{H}^+$. If $z \in F_\mu(\comp^+)$, then taking the preimage $w \in \comp^+$ of $z$ and we see 
$\text{Im}\,\phi_\mu(z)=\text{Im}(F_\mu^{-1}(z) -z) =\text{Im}\,w - \text{Im}\,F_\mu(w)$ is not positive. If $z \notin F_\mu(\comp^+)$, the preimage $w \in \widetilde{\Omega}$ must be in $\comp^- \cup \real$ since $F_\mu|_{\comp^+}$ is univalent, so that  $\text{Im}\,\phi_\mu(z)=\text{Im}(w-z) \leq 0$. 
Therefore, $\phi_\mu$ maps $\comp^+$ into $\comp^- \cup \real$. The other two  cases $z \in i(0,\infty)$ and $z \in \comp^+ \cap \mathbb{H}^-$ are similar. 
\end{proof}
\begin{rem}
 It holds that $\mathcal{UI} \cap\{\mu: \text{symmetric}\} \subset \mathcal{UI}_s$.  For general $\mu \in \mathcal{UI}_s$, the map $F_\mu^{-1}$ may not be univalent in $\comp^+$, but 
 it is 2-valent, i.e., for each $z \in \comp^+$, $\sharp\{w\in\comp^+: F_\mu^{-1}(w)=F_\mu^{-1}(z) \} = 1$ or $2$. 
\end{rem}

The following conditions on a Cauchy transform are quite useful to prove the free infinite divisibility of a probability measure. 
\begin{enumerate}[(A)]
\item\label{A} There is a connected open set $\comp^+ \subset\mathcal{D} \subset \comp$ such that: 
\begin{enumerate}[\rm(\text{A}1)]
\item\label{merA} $G_\mu$ extends to a meromorphic function in $\mathcal{D}$; 

\item\label{diffA} If $G_\mu(z) \in \comp^-$ and $z \in \mathcal{D}$, then $G_\mu'(z)\neq 0$;   

\item\label{barrierA} If a sequence $(z_n)_{n \geq 1} \subset \mathcal{D}$ converges to a point of $\partial \mathcal{D} \cup \{\infty\}$, then the limit
$\lim_{n\to \infty} G_\mu(z_n)$ exists in $\comp^{+} \cup \real  \cup \{\infty\}$.    
\end{enumerate}
\end{enumerate}
Condition (\ref{diffA}) is useful to define an inverse map $F_\mu^{-1}$ in $\comp^+$. This condition was crucial in the proof of free infinite divisibility of the normal distribution \cite{BBLS11}. 
Condition (\ref{barrierA}) is used to show the map $F_\mu^{-1}$ is univalent in $\comp^+$. (\ref{barrierA}) is important  as well as (\ref{merA}) and (\ref{diffA}) because the exponential distribution satisfies (\ref{merA}) and (\ref{diffA}) for $\mathcal{D}=\comp\setminus(-\infty,0]$, but does not satisfy (\ref{barrierA}). 
It is known that the exponential distribution is not FID, see Section \ref{Hankel}.

For symmetric distributions, the following variant can be more useful. 
\begin{enumerate}[(A)]
\setcounter{enumi}{1}
\item\label{B} There is $c \leq 0$ such that $G_\mu$ extends to a univalent map around $i(c,\infty)$ and maps $i(c,\infty)$ onto $i(-\infty,0)$. 
Moreover, there is a connected open set $\comp^+ \cap \mathbb{H}^+ \subset \mathcal{D} \subset \comp^- \cup \mathbb{H}^+$ such that:  
\begin{enumerate}[\rm(\text{B}1)] 
\item\label{merB} $G_\mu$ extends to a meromorphic function in $\mathcal{D}$; 
\item\label{diffB} If $G_\mu(z) \in \comp^-$ and $z \in \mathcal{D}$, then $G_\mu'(z)\neq 0$;   
\item\label{barrierB} If a  sequence $(z_n)_{n \geq 1} \subset \mathcal{D}$ converges to a point of $\partial \mathcal{D}  \cup \{\infty\}$, then the limit  $\lim_{n\to \infty} G_{\mu}(z_n)$ exists in $\overline{\mathbb{H}^- \cup \comp^+}  \cup \{\infty\}$. 
\end{enumerate}
\end{enumerate}

\begin{prop}\label{prop91}\begin{enumerate}[\rm(1)]
\item
If the Cauchy transform $G_\mu$ of a probability measure $\mu$ satisfies (\ref{A}),  then $\mu \in \mathcal{UI}$. 
\item If the Cauchy transform of a symmetric probability measure $\mu$ satisfies (\ref{B}),  then $\mu \in \mathcal{UI}_s$. If, moreover, the domain $\mathcal{D}$ can be taken as a subset of $\mathbb{H}^+$, then $\mu \in \mathcal{UI}$. 
\end{enumerate}
\end{prop}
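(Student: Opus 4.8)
The plan is to pass to the reciprocal $F_\mu=1/G_\mu$ and to reduce both assertions to the single claim that the right compositional inverse $F_\mu^{-1}$ — defined and analytic on some cone $\Gamma_{\lambda,M}$ by Proposition~\ref{prop08}(\ref{p4}) — continues analytically to a single-valued \emph{univalent} function on the relevant simply connected region: $\comp^+$ in case (1), the open first quadrant $\comp^+\cap\mathbb{H}^+$ in case (2). Its image will then be the open set $\Omega$ (resp.\ $\widetilde\Omega$) required by Definition~\ref{def1}. Two observations fuel the continuation. First, at any point $z$ already reached by $F_\mu^{-1}$ one has $F_\mu(z)\in\comp^+$, hence $G_\mu(z)\in\comp^-$, so condition (\ref{diffA}) (resp.\ (\ref{diffB})) gives $G_\mu'(z)\ne 0$, hence $F_\mu'(z)=-G_\mu'(z)/G_\mu(z)^2\ne 0$; the holomorphic inverse-function theorem then continues $F_\mu^{-1}$ a little further, staying inside $\mathcal{D}$, on which (\ref{merA}) (resp.\ (\ref{merB})) keeps $G_\mu$ — and hence $F_\mu$ — single-valued meromorphic. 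Second, a continuation along a path $\gamma$ in the target can stop only if the moving preimage $z(t)=F_\mu^{-1}(\gamma(t))$ accumulates on $\partial\mathcal{D}\cup\{\infty\}$, and the barrier conditions (\ref{barrierA})/(\ref{barrierB}) are designed precisely to forbid this.

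For (1): continue $F_\mu^{-1}$ from a point of $\Gamma_{\lambda,M}$ along an arbitrary path $\gamma$ in $\comp^+$. If the continuation stopped at some parameter $t_\ast$, then $z(t)$ would have to leave every compact subset of $\mathcal{D}$ (an interior accumulation point $z_\ast$ would satisfy $F_\mu(z_\ast)=\gamma(t_\ast)\in\comp^+$, hence $G_\mu'(z_\ast)\ne0$, allowing us to continue past $t_\ast$); thus $z(t)$ accumulates on $\partial\mathcal{D}\cup\{\infty\}$, so by (\ref{barrierA}) (applied along subsequences, $\infty$ being admissible) every accumulation value of $G_\mu(z(t))$ lies in $\comp^+\cup\real\cup\{\infty\}$, whence every accumulation value of $F_\mu(z(t))=\gamma(t)$ lies in $\comp^-\cup\real\cup\{\infty\}$ — contradicting $\gamma(t)\to\gamma(t_\ast)\in\comp^+$. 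Hence $F_\mu^{-1}$ continues along every path in the simply connected domain $\comp^+$; by the monodromy theorem it is single-valued analytic there, by the identity theorem $F_\mu\circ F_\mu^{-1}=\mathrm{id}$ on $\comp^+$, so $F_\mu^{-1}$ is univalent, $\Omega:=F_\mu^{-1}(\comp^+)$ is open, $F_\mu$ maps $\Omega$ analytically bijectively onto $\comp^+$, and $\comp^+=F_\mu^{-1}(F_\mu(\comp^+))\subset\Omega$ gives $\Omega\cap\Gamma_{\lambda,M}\ne\emptyset$. By Definition~\ref{def1}(1), $\mu\in\mathcal{UI}$.

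For (2): condition (a) of Definition~\ref{def1}(2) is immediate from the first clause of (\ref{B}) — after shrinking the neighbourhood of $i(c,\infty)$ so that $G_\mu\ne0$ there, $F_\mu=1/G_\mu$ is univalent and $F_\mu(i(c,\infty))=1/i(-\infty,0)=i(0,\infty)$. For (b) I would repeat the argument of (1) with $\comp^+\cap\mathbb{H}^+$ in place of $\comp^+$, starting from the part of the cone in $\mathbb{H}^+$: the stopping is again impossible, since (\ref{barrierB}) forces the accumulation values of $G_\mu(z(t))$ into $\overline{\mathbb{H}^-\cup\comp^+}\cup\{\infty\}$, whose image under $w\mapsto 1/w$ omits the open first quadrant (the reciprocal of the open fourth quadrant is the open first quadrant), contradicting $F_\mu(z(t))\to\gamma(t_\ast)\in\comp^+\cap\mathbb{H}^+$. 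As $\comp^+\cap\mathbb{H}^+$ is simply connected, monodromy gives a single-valued univalent $F_\mu^{-1}$ on it, $\widetilde\Omega:=F_\mu^{-1}(\comp^+\cap\mathbb{H}^+)\subset\mathcal{D}\subset\comp^-\cup\mathbb{H}^+$ meets $\Gamma_{\lambda,M}$ by Proposition~\ref{prop08}(\ref{p3}),(\ref{p4}), and with (a) this yields $\mu\in\mathcal{UI}_s$. If moreover $\mathcal{D}\subset\mathbb{H}^+$, then $\widetilde\Omega\subset\mathbb{H}^+$; by the symmetry of $G_\mu$ (Proposition~\ref{prop08}(\ref{p5})), $G_\mu(-\bar z)=-\overline{G_\mu(z)}$, the set $\widetilde{\Omega}^\ast=\{-\bar z:z\in\widetilde\Omega\}\subset\mathbb{H}^-$ is carried by $F_\mu$ biholomorphically onto the open second quadrant, and gluing $\widetilde\Omega$, $\widetilde{\Omega}^\ast$ and a thin neighbourhood of $i(c,\infty)$ on which $F_\mu$ is univalent produces an open $\Omega$ with $F_\mu\colon\Omega\to\comp^+$ an analytic bijection, so that $\mu\in\mathcal{UI}$ by Definition~\ref{def1}(1).

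I expect the main obstacle to be the stopping step of the continuation: one must check carefully that whenever $z(t)$ leaves every compact subset of $\mathcal{D}$ — possibly oscillating along $\partial\mathcal{D}$, or drifting toward a point of $\partial\mathcal{D}$ where $G_\mu$ blows up — the conclusion of (\ref{barrierA})/(\ref{barrierB}) still keeps all accumulation values of $G_\mu(z(t))$ off $\comp^-$, which is where meromorphy, the exact shape of the target sets in those conditions, and the reciprocal relation $F_\mu=1/G_\mu$ must be used together. For the ``moreover'' clause the delicate point is instead the gluing: one must verify that the continuation of $F_\mu^{-1}$ coming from $\comp^+\cap\mathbb{H}^+$, the univalent germ of $F_\mu^{-1}$ along $i(c,\infty)$, and the reflected continuation from $\comp^+\cap\mathbb{H}^-$ agree on overlaps, so that no two-valued inverse forms — and it is exactly the hypothesis $\mathcal{D}\subset\mathbb{H}^+$, which keeps $\widetilde\Omega$ strictly to the right of the imaginary axis, that makes this work.
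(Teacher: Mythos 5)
Your proof is correct and follows essentially the same route as the paper: conditions (\ref{diffA})/(\ref{diffB}) supply local invertibility of $F_\mu$ wherever its value lies in the target, and (\ref{barrierA})/(\ref{barrierB}) act as the barrier that rules out escape of the lifted path to $\partial\mathcal{D}\cup\{\infty\}$, after which univalence of the extended inverse follows from $F_\mu\circ F_\mu^{-1}=\mathrm{id}$. The only difference is packaging: you invoke the monodromy theorem on the simply connected target, whereas the paper lifts the explicit foliation of $\comp^+$ by the curves $c_t=\{x+iy:\ ty=|x|+1\}$ one curve at a time — nothing of substance changes.
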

\begin{proof}
(1)\,\,\,Let $c_t \subset \comp^+$ be the curve defined by  
$$
c_t:= \{x+yi: t y= |x|+1,~x \in \real \},~~t>0. 
$$
Note that $\bigcup_{t>0}c_t = \comp^+$. 
From Proposition \ref{prop08}(\ref{p2}), for each $t>0$, if we take a large $R>0$,  there exists a simple curve $\gamma_t^R$ such that 
$F_\mu(\gamma_t^R) = c_t \cap\{z \in \comp^+: \text{Re}\,z >R\}$ and $F_\mu$ maps a neighborhood of $\gamma_t^R$ onto a neighborhood of $c_t \cap\{z \in \comp^+: \text{Re}\,z >R\}$ bijectively. Take a sequence $z_n \in \gamma_t^R$ converging to the edge of $\gamma_t^R$ which we denote by $z^R$, then $F_\mu(z_n)$ converges to $F_\mu(z^R) \in c_t$. Condition (\ref{diffA}) implies that $F_\mu'(z^R) \neq 0$, so that there is an open neighborhood $V^R$ of $z^R$ such that $F_\mu$ maps $V^R$ bijectively onto a neighborhood of $F_\mu(z^R)$. Hence we obtain a curve $\gamma_t^{R-\varepsilon} \supset \gamma_t^R$ for some $\varepsilon>0$ such that $F_\mu(\gamma_t^{R-\varepsilon})= c_t \cap\{z \in \comp^+: \text{Re}\,z >R-\varepsilon\}$. Repeating this argument, we can prolong $\gamma_t^R$ to obtain a maximal curve $\gamma_t \subset \mathcal{D}$ such that $F_\mu$ maps $\gamma_t$ into $c_t$. We show that $F_\mu(\gamma_t)=c_t$, and for this purpose we assume $F_\mu(\gamma_t) $ is a proper subset of $c_t$. 
 Let $x_0:= \inf\{x \in \real: x+\frac{i}{t}(|x|+1) \in F_\mu(\gamma_t)\} \in \real$ and $z_0:= x_0 + \frac{i}{t}(|x_0|+1) \in c_t.$ For a point $z\in c_t, \text{Re}\, z > x_0$, let $w \in \gamma_t$ denote the preimage of $z$. 
 The following cases are possible: 
\begin{enumerate}[\rm(i)] 
\item\label{case1} When $z$ converges to $z_0$, the preimages $w$ have an accumulative point $w_0$ in $\mathcal{D}$; 

\item\label{case2}  When $z$ converges to $z_0$, the preimages $w$ have an accumulative point $w_1$ in $\partial \mathcal{D} \cup \{\infty \}$.  
\end{enumerate}
In the case (\ref{case1}), we can still extend the curve $\gamma_t$ more because of condition (\ref{diffA}) and the obvious fact $F_\mu(w_0)=z_0$; a contradiction to the maximality of $\gamma_t$. The point $w_0$ might be a pole of $F_\mu$, but in that case $z_0$ has to be infinity, which is again a contradiction. In the case (\ref{case2}), condition (\ref{barrierA})  implies $z_0= \lim_{w\to w_1, w\in \mathcal{D}}F_\mu(w) \in \comp^-\cup \real \cup\{\infty\}$, while $z_0 \in c_t \subset \comp^+$, again a contradiction. Thus we conclude that $F_\mu(\gamma_t)=c_t$. Note that  $F_\mu$ maps an open neighborhood $U_t$ of $\gamma_t$ onto a neighborhood of $c_t$ bijectively. Hence the set $\Omega:=\bigcup_{t>0}U_t \subset \mathcal{D}$ is open and $F_\mu$ maps $\Omega$ bijectively onto $\comp^+$. This implies that $F_\mu|_{\Omega}^{-1}$ exists as a univalent map in $\comp^+$. Since $\Omega$ has intersection with the original domain $\Gamma_{\lambda,M}$ of the right inverse $F_\mu^{-1}$, the map $F|_{\Omega}^{-1}$ extends $F_\mu^{-1}$ analytically, and hence $\mu \in \mathcal{UI}$.

(2)\,\,\, The proof is quite similar. Let $\widetilde{c}_t:= c_t \cap \mathbb{H}^+$. One can prolong the above $\gamma_t^R$, to obtain $\widetilde{\gamma}_t \subset \mathcal{D}$ such that $F_\mu(\widetilde{\gamma}_t) =\widetilde{c}_t.$ Denoting by $\widetilde{U}_t$ an open neighborhood of $\widetilde{\gamma}_t$ where $F_\mu$ is univalent, $F_\mu$ maps $\widetilde{\Omega}:=\bigcup_{t>0}\widetilde{U}_t \subset \mathcal{D}$ bijectively onto $\comp^+ \cap \mathbb{H}^+$, so that $\mu\in\mathcal{UI}_s$. Moreover, if $\mathcal{D} \subset \mathbb{H}^+$, then the map $F_\mu|_{\widetilde{\Omega}}^{-1}$ defined in (\ref{eq59}) is univalent in $\comp^+$. 
\end{proof}
\begin{rem}
Condition (\ref{diffA}) enables us to construct the curve $\gamma_t$, but $\gamma_t$  can enter another Riemannian sheet of $F_\mu$ beyond $\partial \mathcal{D}$. Condition (\ref{barrierA}) becomes a ``barrier'' which prevents such a phenomenon. If $F_\mu$ is a rational function in $\comp$ as in the case of Student distributions for $q$ integers, 
there is no other branch of $F_\mu$ and we can take $\mathcal{D}=\comp$ and condition (\ref{barrierA}) is easily verified.  This will give a simple proof of free infinite divisibility of Gaussian (see Section \ref{st}).  
\end{rem}

\section{Cauchy transforms of beta, beta prime and Student t-distributions}\label{sec2}
Let $F(a,b;c; z)$ be the \emph{Gauss hypergeometric series}:  
$$
F(a,b;c; z) = \sum_{n=0}^\infty \frac{(a)_n (b)_n}{( c)_n}\frac{z^n}{n!},~~c \notin \{0, -1,-2,-3,\cdots\}
$$
with the conventional notation $(a)_n:=a(a+1)\cdots(a+n-1)$, $(a)_0:=1$. This series is absolutely convergent for $|z|<1$. 
There is an integral representation 
\begin{equation}\label{int}
F(a,b;c; z)=\frac{1}{B(c-b,b)}\int_{0}^1 x^{b-1}(1-x)^{c-b-1}(1-zx)^{-a}dx,  \quad \text{Re}(c ) >\text{Re}(b )>0,  
\end{equation}
which continues $F(a,b;c; z)$ analytically to $\comp \setminus [1, \infty)$. The normalizing constant $B(p,q)$ is the beta function which is related to the gamma function as $B(p,q)= \frac{\Gamma(p{})\Gamma({}q)}{\Gamma(p+q)}.$  

We note some formulas required in this paper \cite[Chapter 15]{AS70}. 
\begin{align}
c(1-z) F(&a,b;c; z) - cF(a-1, b;c; z) + (c-b)zF(a,b;c+1; z)=0, \label{15.2.20}\\   %15.2.20
F(a,b;c; z) &= (1-z)^{c-a-b} F(c-a,c-b;c; z)~~~~~(|\arg(1-z)|<\pi),\label{15.3.3}%15.3.3
\end{align}
\vspace{-22pt}
\begin{align}
F(a,b;c; z) &= \frac{\Gamma({}c)\Gamma(b-a)}{\Gamma(b)\Gamma(c-a)}(-z)^{-a}F\left(a,1-c+a;1-b+a; \frac{1}{z}\right)  \notag \\
&~~~+\frac{\Gamma({}c)\Gamma(a-b)}{\Gamma(a)\Gamma(c-b)}(-z)^{-b}F\left(b,1-c+b;1-a+b; \frac{1}{z}\right) \notag \\ 
&~~~~~~~~~~~~~~~~~~~~~~~~~~~~~~~~~~~~~~~~~~~~~~ (b-a \notin\mathbb{Z},~|\arg (-z)|<\pi), \label{15.3.7} %15.3.7
\end{align}
\vspace{-22pt}
\begin{align}
F(a,b;c; z) &= \frac{\Gamma({}c)\Gamma(c-a-b)}{\Gamma(c-a)\Gamma(c-b)}z^{-a}F\left(a,a-c+1;a+b-c+1; 1-\frac{1}{z}\right)  \notag \\
&~~~+\frac{\Gamma({}c)\Gamma(a+b-c)}{\Gamma(a)\Gamma(b)}(1-z)^{c-a-b}z^{a-c}F\left(c-a,1-a;c-a-b+1;1- \frac{1}{z}\right) \notag \\ 
&~~~~~~~~~~~~~~~~~~~~~~~~~~~~~ (a+b-c\notin\mathbb{Z},~|\arg z|, |\arg(1-z)|<\pi). \label{15.3.9} %15.3.7
\end{align}
The branch of every $z^{p}$ is the principal value. When $b-a \in \mathbb{Z}$, all terms in (\ref{15.3.7}) diverge, but an alternative formula is available \cite[15.3.14]{AS70}. 
The formula (\ref{15.3.7}), however, is sufficient for our purpose. Similarly, we do not use an alternative formula for (\ref{15.3.9}). 

The following properties are useful for calculating the Cauchy transforms of beta prime and t-distributions. 
\begin{lem} \label{lem2}
\begin{enumerate}[\rm(1)]
\item\label{inverse} Let $X$ be a $\real$-valued random variable such that $X\neq 0$ a.s. Then 
$$
\widetilde{G}_{1/X}(z) = \frac{1}{z}-\frac{1}{z^2}\widetilde{G}_X\left(\frac{1}{z}\right), ~~z \in \comp\setminus \real. 
$$

\item\label{affine} Let $X$ be a $\mathbb{R}$-valued random variable. Then, for $a \neq 0$ and $b \in \real$,  
$$
\widetilde{G}_{a X +b}(z) = \frac{1}{a}\widetilde{G}_{X}\left(\frac{z-b}{a}\right),~~z \in \comp\setminus\real. 
$$

\item\label{square} If $X$ is a $\real$-valued symmetric random variable, then 
$$
G_{X}(z) = z\widetilde{G}_{X^2}(z^2),~~z\in \comp^+. 
$$
\end{enumerate}
\end{lem}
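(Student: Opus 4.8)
All three identities will be verified by direct manipulation of the integral defining the Cauchy transform; the only points needing care are checking that the arguments occurring on the right-hand sides lie in $\comp\setminus\real$ (respectively avoid the support of the relevant measure) and that the integrands are bounded, so that linearity of the integral applies without issue. No genuine obstacle is anticipated.

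For part (\ref{affine}) I would write $\widetilde{G}_{aX+b}(z)=\mathbb{E}\!\left[(z-aX-b)^{-1}\right]=\frac1a\,\mathbb{E}\!\left[\left(\tfrac{z-b}{a}-X\right)^{-1}\right]$, and note that $\operatorname{Im}\!\left(\tfrac{z-b}{a}\right)=\tfrac{\operatorname{Im}z}{a}\neq0$ for $z\in\comp\setminus\real$, so $\tfrac{z-b}{a}\in\comp\setminus\real$ and the last expression is $\frac1a\widetilde{G}_X\!\left(\tfrac{z-b}{a}\right)$; the integrand is bounded because its denominator stays off $\real$.

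For part (\ref{inverse}) I would first observe $\tfrac1z\in\comp\setminus\real$ whenever $z\in\comp\setminus\real$ (since $\operatorname{Im}(1/z)=-\operatorname{Im}z/|z|^2$), so the right-hand side is well defined. Because $X\neq0$ almost surely, the pointwise identity $\tfrac{1}{z-1/X}=\tfrac{X}{zX-1}$ holds, and $\tfrac{X}{zX-1}$ is bounded (its denominator vanishes only at $X=1/z\notin\real$), hence integrable. Expanding $\tfrac1z-\tfrac{1}{z^2}\widetilde{G}_X(1/z)=\tfrac1z-\tfrac{1}{z^2}\mathbb{E}\!\left[(1/z-X)^{-1}\right]$ and using $(1/z-X)^{-1}=z(1-zX)^{-1}$, elementary algebra reduces the right-hand side to $\mathbb{E}\!\left[\tfrac{X}{zX-1}\right]=\widetilde{G}_{1/X}(z)$.

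For part (\ref{square}) I would use that the law of $X$ is symmetric to get, for $z\in\comp^+$, $G_X(z)=\tfrac12\,\mathbb{E}\!\left[(z-X)^{-1}+(z+X)^{-1}\right]=z\,\mathbb{E}\!\left[(z^2-X^2)^{-1}\right]$. It then remains to check that $z^2$ lies in the domain of $\widetilde{G}_{X^2}$: a nonnegative real square would force $z$ real or purely imaginary, and a purely imaginary $z$ with positive imaginary part has $z^2<0$, so $z^2\notin[0,\infty)\supset\operatorname{supp}(X^2)$; hence $\mathbb{E}\!\left[(z^2-X^2)^{-1}\right]$ converges and equals $\widetilde{G}_{X^2}(z^2)$, where for purely imaginary $z$ one uses the continuous (analytic) extension of $\widetilde{G}_{X^2}$ to $(-\infty,0)$ afforded by $\operatorname{supp}(X^2)\subset[0,\infty)$. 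The only step requiring attention is this domain bookkeeping in parts (\ref{inverse}) and (\ref{square}); everything else is routine.
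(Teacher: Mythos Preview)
Your proposal is correct and follows essentially the same route as the paper: direct algebraic manipulation of the integrand in each case, with the symmetry of $X$ used in part~(\ref{square}) to pair $(z-X)^{-1}$ with $(z+X)^{-1}$. If anything, you are more careful than the paper about the domain bookkeeping---the paper writes $\widetilde{G}_{X^2}(z^2)$ without commenting on the purely imaginary case, whereas you note that one needs the analytic extension of $\widetilde{G}_{X^2}$ across $(-\infty,0)$ there.
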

\begin{proof}
Let $\mu$ be the distribution of $X$. \\

(1)\,\,\,
$
\displaystyle 
\widetilde{G}_{1/X}(z)= \int_{\real} \frac{1}{z-1/x}\, \mu(dx) = \frac{1}{z}\int_{\real} \frac{x-1/z+1/z}{x-1/z}\, \mu(dx) \\[5pt]
~~~~~~~~~~~~~~~~~~~~~\,= \frac{1}{z} - \frac{1}{z^2}\int_{\real} \frac{1}{1/z-x}\, \mu(dx) =\frac{1}{z}-\frac{1}{z^2}\widetilde{G}_X\left(\frac{1}{z}\right).
$
\\

(2) is easy to prove. \\

(3)\,\,\,
$
\displaystyle 
G_{X}(z)= \int_{0}^\infty \frac{1}{z-x}\, \mu(dx) +  \int_{-\infty}^0 \frac{1}{z-x}\, \mu(dx)= \int_{0}^\infty\left(\frac{1}{z-x}+ \frac{1}{z+x}\right)\mu(dx) \\[5pt]
~~~~~~~~~~~~~~~~~~~= \int_{0}^\infty \frac{2z}{z^2-x^2}\, \mu(dx) =z\widetilde{G}_{X^2}(z^2). 
$
\end{proof}
Now we are going to compute the Cauchy transforms of $\bm{\beta}_{p,q}, \bm{\beta}'_{p,q}$ and $\mathbf{t}_q$ in terms of hypergeometric series. 
\begin{prop}\label{prop113}
\begin{enumerate}[\rm(1)]
\item $
\displaystyle G_{\bm{\beta}_{p,q}}(z)=\frac{1}{z}  F(1,p;p+q; z^{-1}) 
$
for $z \in \comp^+$.  

\item 
$\displaystyle
\widetilde{G}_{\bm{\beta}'_{p,q}} (z) 
= \frac{1}{z+1}+\frac{1}{(z+1)^2}\widetilde{G}_{\bm{\beta}_{p,q}}\left(\frac{z}{z+1}\right)\\[5pt]
\,\,\,\,\,\,\,\,\,\,\,\,\,\,\,\,\,\,\,\,
 =\frac{q}{(p+q)z} F\left(1,p;1+p+q; 1+\frac{1}{z}\right), ~~ z\in \comp\setminus[0,\infty). 
 $

\item $\displaystyle G_{\mathbf{t}_q}(z) =\frac{q-\frac{1}{2}}{q}\frac{1}{z}F\left(1,\frac{1}{2};1+q; 1+\frac{1}{z^2}\right)$ for $z \in \comp^+$. 
\end{enumerate}
\end{prop}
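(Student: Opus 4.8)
The plan is to derive all three Cauchy transforms from a single computation, namely part (1), and then obtain (2) and (3) from it via the elementary transformation rules collected in Lemma \ref{lem2}. For part (1), I would start directly from the definition of the Cauchy transform and the density of $\bm{\beta}_{p,q}$:
\[
G_{\bm{\beta}_{p,q}}(z) = \frac{1}{B(p,q)}\int_0^1 \frac{x^{p-1}(1-x)^{q-1}}{z-x}\,dx.
\]
The idea is to factor out $z$ from the denominator, writing $\frac{1}{z-x} = \frac{1}{z}\cdot\frac{1}{1-x/z}$, and then recognize the resulting integral as the Euler integral representation (\ref{int}) of a Gauss hypergeometric function. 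Matching parameters: with $b = p$, $c - b - 1 = q - 1$ so $c = p+q$, and the factor $(1-zx)^{-a}$ in (\ref{int}) corresponds here to $(1 - x/z)^{-1}$, i.e. $a = 1$ and the hypergeometric argument is $1/z$. Since $B(c-b,b) = B(q,p) = B(p,q)$, the normalizing constants cancel exactly, yielding $G_{\bm{\beta}_{p,q}}(z) = \frac{1}{z}F(1,p;p+q;z^{-1})$. One should note that (\ref{int}) requires $\operatorname{Re}(c) > \operatorname{Re}(b) > 0$, i.e. $p > 0$ and $p+q > p$, both automatic here, and that for $z \in \comp^+$ we have $1/z \in \comp \setminus [1,\infty)$, so the representation and the analytic continuation it provides are valid on all of $\comp^+$.

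For part (2), the first equality is immediate from Lemma \ref{lem2}: by relation (1) in the list of distributional identities, if $X \sim \bm{\beta}_{p,q}$ then $\frac{X}{1-X} \sim \bm{\beta}'_{p,q}$, equivalently $\bm{\beta}'_{p,q}$ is the law of $\frac{1}{1/X} - 1$-type expression; more directly one writes $\frac{X}{1-X} = \frac{1}{1-X} - 1$, applies Lemma \ref{lem2}(\ref{affine}) with the shift by $-1$, then Lemma \ref{lem2}(\ref{inverse}) to pass from $1-X$ to $\frac{1}{1-X}$, and Lemma \ref{lem2}(\ref{affine}) again for $1-X$ in terms of $X$. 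Chaining these substitutions gives $\widetilde{G}_{\bm{\beta}'_{p,q}}(z) = \frac{1}{z+1} + \frac{1}{(z+1)^2}\widetilde{G}_{\bm{\beta}_{p,q}}\!\left(\frac{z}{z+1}\right)$, valid for $z \in \comp \setminus [0,\infty)$ since then $\frac{z}{z+1} \in \comp \setminus \real$. The second equality is then a matter of substituting the formula from part (1), so that the argument of $F$ becomes $1 + 1/z$, and applying a contiguous-type relation to shift the third parameter from $p+q$ to $1+p+q$; I expect the three-term relation (\ref{15.2.20}) (together with the $B$-function identity $\Gamma(c+1)/\Gamma(c) = c$) to produce the prefactor $\frac{q}{(p+q)z}$ after simplification.

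For part (3), I would use Lemma \ref{lem2}(\ref{square}): since $\mathbf{t}_q$ is symmetric and $X^2 \sim \bm{\beta}'_{1/2,\,q-1/2}$ by relation (6) in the distributional identities, we get $G_{\mathbf{t}_q}(z) = z\,\widetilde{G}_{\bm{\beta}'_{1/2,\,q-1/2}}(z^2)$, and plugging in part (2) with $p = 1/2$, $q \rightsquigarrow q - 1/2$ gives the stated formula after collecting constants. The main obstacle I anticipate is purely bookkeeping: getting the hypergeometric parameter shifts and the accompanying constant factors exactly right, since the contiguous relations (\ref{15.2.20})--(\ref{15.3.9}) must be applied in the correct direction and the gamma-function normalizations tracked carefully. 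There is no conceptual difficulty — the only analytic point worth a remark is checking that each substitution keeps the hypergeometric argument inside the domain $\comp \setminus [1,\infty)$ where the Euler representation and its analytic continuation are valid, which one verifies by noting that the Möbius maps $z \mapsto 1/z$, $z \mapsto z/(z+1)$, and $z \mapsto 1 + 1/z$ all send the relevant cut planes into $\comp \setminus [1,\infty)$.
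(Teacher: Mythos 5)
Your proposal is correct and follows essentially the same route as the paper: part (1) reads off directly from the Euler integral representation (\ref{int}) with $a=1$, $b=p$, $c=p+q$, part (2) combines Lemma \ref{lem2}(\ref{inverse}), (\ref{affine}) with the contiguous relation (\ref{15.2.20}) to raise the third parameter from $p+q$ to $1+p+q$, and part (3) is Lemma \ref{lem2}(\ref{square}) applied to $X^2\sim\bm{\beta}'_{1/2,q-1/2}$. (The only negligible slip is the remark that $z\in\comp\setminus[0,\infty)$ forces $\tfrac{z}{z+1}\in\comp\setminus\real$ --- for negative real $z$ this point lies in $\real\setminus[0,1]$ --- but both sides are analytic there, so the identity extends without issue.)
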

\begin{proof}
(1)\,\,\, This is easy from the integral representation (\ref{int}) of the hypergeometric series.

(2)\,\,\, If $X \sim\bm{\beta}_{p,q}$, then $\frac{X}{1-X}\sim\bm{\beta}'_{p,q}$, so that $\displaystyle G_{\bm{\beta}'_{p,q}}(z)=\frac{1}{z+1}+\frac{1}{(z+1)^2}G_{\bm{\beta}_{p,q}}\left(\frac{z}{z+1}\right)$ from Lemma \ref{lem2}(\ref{inverse}), (\ref{affine}). 
 Hence 
\begin{equation}\label{eq0021}
G_{\bm{\beta}'_{p,q}}(z) = \frac{1}{1+z} + \frac{1}{z(1+z)}  F\left(1,p;p+q; 1+\frac{1}{z}\right). 
\end{equation}
The formula (\ref{15.2.20}) can increase the parameter $p+q$ by 1: 
$$
F\left(1,p;p+q; 1+\frac{1}{z}\right)= \frac{q}{p+q}(1+z)F\left(1,p;1+p+q; 1+\frac{1}{z}\right) -z. 
$$
This, together with (\ref{eq0021}), leads to the conclusion. 

(3)\,\,\,We can use Lemma \ref{lem2}(\ref{square}) because $X \sim \mathbf{t}_q$ implies $X^2 \sim \bm{\beta}'_{1/2, q-1/2}$. 
\end{proof}

\begin{exa}
Some hypergeometric functions and hence the corresponding $G_{\bm{\beta}_{p,q}}, G_{\bm{\beta}'_{p,q}}$ have explicit forms. Examples are presented here. 
\begin{enumerate}[\rm(1)]
\item $F(1,a;1;z) = (1-z)^{-a}$ from the formula (\ref{15.3.3}), and hence 
$$
G_{\bm{\beta}_{a,1-a}}(z) = \frac{1}{z}\left(1-\frac{1}{z}\right)^{-a},~~ 0<a<1,~~|\arg (-z)| <\pi. 
$$
\item From (\ref{15.2.20}), we have $(1-z)F(1,a;1;z)-1+(1-a)zF(1,a;2;z)=0$, and hence $zF(1,a;2;z)=\frac{1-(1-z)^{1-a}}{1-a}.$ The Cauchy transform of $\bm{\beta}_{1-a, 1+a}$ is given by 
$$
G_{\bm{\beta}_{1-a,1+a}}(z) = \frac{1}{a}\left(1-\left(1-\frac{1}{z}\right)^a\right),~~ -1<a<1, ~~|\arg (-z)| <\pi. 
$$
\item Similarly, we can calculate $zF(1,a;3;z) =\frac{2\left( (2-a)z-1+(1-z)^{2-a}\right)}{(2-a)(1-a)z}$ and hence 
$$
G_{\bm{\beta}_{2-a,1+a}}(z)=\frac{2\left(a-z+z(1-\frac{1}{z})^a\right)}{a(a-1)},~~ -1<a<2,~~|\arg (-z)| <\pi.
$$
\end{enumerate}
For beta prime distributions, the formula $G_{\bm{\beta}'_{q,p}}(z)=\frac{1}{z+1}-\frac{1}{(z+1)^2}G_{\bm{\beta}_{p,q}}(\frac{1}{z+1})$ holds because of Lemma \ref{lem2} and of the fact that $X \sim \bm{\beta}_{p,q}$ implies $\frac{1}{X}-1\sim \bm{\beta}'_{q,p}$. Explicit formulas are therefore easy to calculate.  
\begin{enumerate}[\rm(1)]
\setcounter{enumi}{3}
\item $\displaystyle G_{\bm{\beta}'_{1-a,a}}(z) = \frac{1-(-z)^{-a}}{1+z}$, ~~$0<a<1$, ~~$|\arg (-z)| <\pi.$ 

\item $\displaystyle G_{\bm{\beta}'_{1+a,1-a}}(z) = \frac{1}{1+z} -\frac{1-(-z)^a}{a(1+z)^2}$,~~ $-1<a<1$,~~$|\arg (-z)| <\pi.$

\item $\displaystyle G_{\bm{\beta}'_{1+a,2-a}}(z)=\frac{1}{1+z}-\frac{2\left(az +a-1 +(-z)^a\right)}{a(a-1)(1+z)^3},$ ~~$-1<a<2$,~~$|\arg (-z)| <\pi.$
\end{enumerate}

\textbf{Note.} The measure $\bm{\beta}_{1-a,1+a}$ appeared in \cite{AHb} and $\bm{\beta}_{a,1-a}$ appeared in \cite{M10}. 
Demni computed explicitly \emph{generalized Cauchy-Stieltjes transforms} of beta distributions \cite{D09}. 
\end{exa}

\begin{rem}  
There is a relation (\ref{eq001}) involving monotone convolution and beta distributions. We will find more, replacing monotone convolution by multiplicative monotone convolution. 
The \emph{multiplicative monotone convolution} $\mu \dismrhd \nu$ of probability measures $\mu,\nu$ on $[0,\infty)$ is the distribution of $\sqrt{X}Y\sqrt{X}$, where $X,Y$ are positive random variables, respectively following the distributions $\mu,\nu$, and
$X-1$ and $Y-1$ are monotonically independent \cite{B05,F09}. It is characterized by 
$$
\eta_{\mu \submrhd \nu}(z)=\eta_{\mu}(\eta_{\nu}(z)),~~~z\in (-\infty,0), 
$$
where 
$\eta_\mu(z):= 1-z F_\mu(\frac{1}{z})$ is called the $\eta$-transform. 

Let $\widetilde{\bm{\beta}}_{a}:=\bm{\beta}_{a,1-a}$ and $\widetilde{\bm{\beta}}'_{a}(dx):=\bm{\beta}'_{1-a,a}(dx-1)$. Then $G_{\widetilde{\bm{\beta}}'_{a}}(z) =  \frac{1-(1-z)^{-a}}{z}$ and  
$$
\eta_{\widetilde{\bm{\beta}}_{a}}(z)=1-(1-z)^a,~~\eta_{\widetilde{\bm{\beta}}'_{a}}(z)=\frac{(-z)^a}{(-z)^a-(1-z)^a}, ~~z<0,  
$$
which entail 
$$
\eta_{\widetilde{\bm{\beta}}_{a}} \circ \eta_{\widetilde{\bm{\beta}}_{b}} = \eta_{\widetilde{\bm{\beta}}_{ab}},~~\eta_{\widetilde{\bm{\beta}}'_{a}}\circ \eta_{\widetilde{\bm{\beta}}'_{b}} =\eta_{\widetilde{\bm{\beta}}'_{ab}},
$$
or equivalently 
$$
\widetilde{\bm{\beta}}_{a} \dismrhd \widetilde{\bm{\beta}}_{b} =\widetilde{\bm{\beta}}_{ab},~~\widetilde{\bm{\beta}}'_{a} \dismrhd \widetilde{\bm{\beta}}'_{b}=\widetilde{\bm{\beta}}'_{ab}
$$
for $0<a,b<1$. Hence the measures $\widehat{\bm{\beta}}_t:=\bm{\beta}_{e^{-t},1-e^{-t}}$ and $\widehat{\bm{\beta}}'_t(dx):=\bm{\beta}'_{1-e^{-t},e^{-t}}(dx-1)$ both form $\dismrhd$-convolution semigroups 
with initial measure $\delta_1$ at $t=0.$ 
\end{rem}

\section{Free infinite divisibility for beta and beta prime distributions}\label{sec3}

In order to find a good domain $\mathcal{D}$ such that condition (\ref{A}) holds, the following alternative condition is useful.  
\begin{enumerate}[(A)]
\setcounter{enumi}{2}
\item\label{C} There is a connected open set $\comp^+ \subset\mathcal{E} \subset \comp$ such that: 
\begin{enumerate}[\rm(\text{C}1)]
\item\label{merC} $G_\mu$ extends to an \emph{analytic function} in $\mathcal{E}$; 
\item\label{diffC} If $G_\mu(z) \in \real$ and $z \in \mathcal{E}$, then $G_\mu'(z)\neq 0$.  
\end{enumerate}
\end{enumerate} 
The usage of this condition becomes clear in Theorem \ref{thm11}, \ref{thm12}. Remark \ref{rem09} also explains why this condition is important.  

We are going to prove conditions (\ref{A}) and (\ref{C}) for beta and beta prime distributions. 
The following result shows conditions (\ref{merA}) and (\ref{merC}), and moreover explicit formulas of the analytic continuation of Cauchy transforms. 
\begin{prop}\label{prop78}\begin{enumerate}[\rm(1)]
\item The Cauchy transform $G_{\bm{\beta}_{p,q}}$ analytically extends to $\mathcal{D}^b=\mathcal{E}^b:=\comp \setminus\left( (-\infty,0] \cup [1,\infty)\right)$. Denoting the analytic continuation by the same symbol  $G_{\bm{\beta}_{p,q}}$, we obtain
\begin{align}
&G_{\bm{\beta}_{p,q}}(z)
=  \widetilde{G}_{\bm{\beta}_{p,q}}(z) - \frac{2\pi i }{B(p,q)}z^{p-1}(1-z)^{q-1}, ~~z \in \comp^{-}. \label{eqbeta} 
\end{align}

\item\label{betapp} The Cauchy transform $G_{\bm{\beta}'_{p,q}}$ analytically extends to $\mathcal{D}^{bp}=\mathcal{E}^{bp}:=\comp \setminus (-\infty,0]$, and we denote the analytic continuation by the same symbol $G_{\bm{\beta}'_{p,q}}$. 
Then \begin{align}
&G_{\bm{\beta}'_{p,q}}(z)
=  \widetilde{G}_{\bm{\beta}'_{p,q}}(z) - \frac{2\pi i }{B(p,q)}\frac{z^{p-1}}{(1+z)^{p+q}}, ~~z \in \comp^{-}. \label{eqbetap} 
\end{align}
\end{enumerate}
All the powers $w \mapsto w^r$ are the principal values in the above statements. 
\end{prop}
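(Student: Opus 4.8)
The plan is to establish the analytic continuation of the Cauchy transforms of $\bm{\beta}_{p,q}$ and $\bm{\beta}'_{p,q}$ directly from the integral representations, rather than from the hypergeometric series. For part (1), I start from $\widetilde{G}_{\bm{\beta}_{p,q}}(z) = \frac{1}{B(p,q)}\int_0^1 \frac{x^{p-1}(1-x)^{q-1}}{z-x}\,dx$, valid for $z \in \comp \setminus \real$. The integrand is, for fixed $x \in (0,1)$, analytic in $z$ away from the point $x$; the whole integral thus defines an analytic function on $\comp \setminus [0,1]$. This already shows that $\widetilde{G}_{\bm{\beta}_{p,q}}$ — and hence $G_{\bm{\beta}_{p,q}}$, which agrees with it on $\comp^+$ — extends analytically across $(-\infty,0)$ and across $(1,\infty)$, giving the domain $\mathcal{D}^b = \mathcal{E}^b = \comp \setminus ((-\infty,0] \cup [1,\infty))$. (One checks that genuine singularities survive at $0$ and at $1$ when $p$, $q$ are not integers, so the domain cannot be enlarged in general; this is a side remark and not needed for the statement.)

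The substantive point is formula \eqref{eqbeta}: identifying the analytic continuation of $G_{\bm{\beta}_{p,q}}$ from $\comp^+$ to $\comp^-$. Here I would argue by the principle of analytic continuation through the slit $(0,1)$. The key input is the jump of $\widetilde{G}_{\bm{\beta}_{p,q}}$ across $(0,1)$: by the Stieltjes inversion formula \eqref{eqtau} (or just Plemelj's formula), for $x_0 \in (0,1)$,
\begin{equation*}
\lim_{y \searrow 0} \widetilde{G}_{\bm{\beta}_{p,q}}(x_0+iy) - \lim_{y \searrow 0}\widetilde{G}_{\bm{\beta}_{p,q}}(x_0-iy) = -2\pi i \cdot \frac{1}{B(p,q)} x_0^{p-1}(1-x_0)^{q-1}.
\end{equation*}
Now define $H(z) := \widetilde{G}_{\bm{\beta}_{p,q}}(z) - \frac{2\pi i}{B(p,q)} z^{p-1}(1-z)^{q-1}$ on $\comp^-$, using the principal branches of the powers (which are analytic on $\comp \setminus ((-\infty,0] \cup [1,\infty))$, in particular on $\comp^-$). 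The boundary value of the subtracted term as $z \to x_0$ from $\comp^-$ picks up precisely the factor needed to cancel the jump: since $\arg z \to 0^-$ and $\arg(1-z) \to 0^-$ there, the principal branch gives $\lim_{y\searrow 0} z^{p-1}(1-z)^{q-1}|_{z=x_0-iy} = x_0^{p-1}(1-x_0)^{q-1}$, while from $\comp^+$ one instead reaches the same real value. Comparing boundary values on $(0,1)$, the function equal to $G_{\bm{\beta}_{p,q}} = \widetilde{G}_{\bm{\beta}_{p,q}}$ on $\comp^+$ and to $H$ on $\comp^-$ has matching (continuous) boundary values from both sides on the open interval $(0,1)$, and both pieces are analytic; hence by a Morera/edge-of-the-wedge argument they glue to a single analytic function on $\mathcal{D}^b$. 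This is exactly \eqref{eqbeta}. I would double-check the sign and the branch bookkeeping carefully, as this is where errors creep in: the convention $|\arg(-z)| < \pi$ etc. must be reconciled with writing $z^{p-1}$, $(1-z)^{q-1}$ as principal values on $\comp^-$.

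For part (2), the strategy is identical in spirit, now starting from $\widetilde{G}_{\bm{\beta}'_{p,q}}(z) = \frac{1}{B(p,q)}\int_0^\infty \frac{x^{p-1}}{(1+x)^{p+q}(z-x)}\,dx$. The same reasoning shows analytic continuation to $\comp \setminus [0,\infty)$; but one must also examine whether the cut $[0,\infty)$ can be partially removed near $\infty$ or at $0$, and here the relevant fact is that the density $\frac{1}{B(p,q)}\frac{x^{p-1}}{(1+x)^{p+q}}$ is supported on all of $[0,\infty)$ with no vanishing, so the continuation domain is exactly $\mathcal{D}^{bp} = \mathcal{E}^{bp} = \comp \setminus (-\infty,0]$. (Alternatively, one can transport part (1) through the Möbius change of variables $z \mapsto \frac{z}{z+1}$ using the relation $G_{\bm{\beta}'_{p,q}}(z) = \frac{1}{z+1} + \frac{1}{(z+1)^2}G_{\bm{\beta}_{p,q}}\left(\frac{z}{z+1}\right)$ from Proposition~\ref{prop113}(2): the map $z \mapsto \frac{z}{z+1}$ sends $\comp \setminus (-\infty,0]$ onto $\comp \setminus ((-\infty,0]\cup[1,\infty))$, and carefully tracking how $z^{p-1}$, $(1-z)^{q-1}$ pull back to $z^{p-1}$, $(1+z)^{-p-q}$ up to the Jacobian factors recovers \eqref{eqbetap}.) The jump of $\widetilde{G}_{\bm{\beta}'_{p,q}}$ across $(0,\infty)$ is $-2\pi i\cdot \frac{1}{B(p,q)}\frac{x^{p-1}}{(1+x)^{p+q}}$ by Plemelj again, and subtracting $\frac{2\pi i}{B(p,q)}\frac{z^{p-1}}{(1+z)^{p+q}}$ (principal branches, analytic on $\comp^-$) cancels it, yielding \eqref{eqbetap} by the same edge-of-the-wedge gluing. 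The main obstacle throughout is not conceptual but bookkeeping: getting the branch cuts of the subtracted power functions to line up consistently with the half-plane on which the formula is asserted, and verifying that those power functions are genuinely analytic on the claimed domain (which forces the statement's final sentence that all powers are principal values). Everything else — absolute convergence of the integrals for $p,q>0$, analyticity under the integral sign, the Plemelj jump formula — is routine.
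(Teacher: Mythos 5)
Your proof is correct, but it takes a genuinely different route from the paper's. The paper obtains the continuation by deforming the contour of integration into an arc $\gamma\subset\comp^-$ joining $0$ and $1$ (using that the density extends analytically to $\mathcal{D}^b$), and then derives \eqref{eqbeta} from the residue theorem applied to the closed contour $\gamma\cup[0,1]$: the residue of $w\mapsto \frac{1}{z-w}\,w^{p-1}(1-w)^{q-1}$ at $w=z$ produces the correction term in one stroke. You instead compute the Sokhotski--Plemelj jump of $\widetilde{G}_{\bm{\beta}_{p,q}}$ across the open support, verify that the principal-branch power term reproduces exactly that jump when approached from $\comp^-$, and glue $G_{\bm{\beta}_{p,q}}|_{\comp^+}$ to $H$ by a Morera argument. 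The two arguments rest on the same fact (analyticity of the density on a complex neighbourhood of the open support), but yours additionally needs existence and continuity of the boundary values of $\widetilde{G}$ on the slit (Plemelj--Privalov; guaranteed here since the density is locally Lipschitz on $(0,1)$, resp.\ $(0,\infty)$), whereas the contour-deformation argument bypasses boundary values entirely; in exchange, your route makes the branch bookkeeping on $\comp^-$ fully explicit. One slip worth correcting: in the opening of each part you claim that the trivial extension of $\widetilde{G}$ across $(-\infty,0)$ and $(1,\infty)$ ``gives the domain $\mathcal{D}^b$''. It does not --- it gives $\comp\setminus[0,1]$ (resp.\ $\comp\setminus[0,\infty)$), which is a different domain and a different continuation from the one the proposition asserts. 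The continuation to $\mathcal{D}^b$ (resp.\ $\mathcal{D}^{bp}$) is precisely the continuation \emph{through} $(0,1)$ (resp.\ $(0,\infty)$), and that is exactly what your Plemelj/Morera step supplies; since $\mathcal{D}^b=\comp^+\cup(0,1)\cup\comp^-$, the gluing alone already yields the full claimed domain, so the opening observation can simply be deleted. Likewise, the non-vanishing of the beta-prime density on $(0,\infty)$ bears only on maximality of the domain, which is not claimed.
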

\begin{proof} 
(1)\,\,\, Note that the density function $\frac{1}{B(p,q)}w^{p-1}(1-w)^{q-1}$ extends analytically to $\mathcal{D}^b$. Therefore 
for any $z \in\comp^+$, the Cauchy transform $G_{\bm{\beta}_{p,q}}$ can be written as 
$$
 \frac{1}{B(p,q)}\int_{\gamma}\frac{1}{z-w} \,w^{p-1}(1-w)^{q-1}\,dw, 
$$
where $\gamma$ is any simple arc contained in $\comp^-$ except its endpoints $0,1$. This gives the analytic continuation of $G_{\bm{\beta}_{p,q}}$ to the domain containing $\comp^+$, surrounded by $\gamma$ and $(-\infty,0] \cup [1,\infty)$. Since $\gamma$ is arbitrary, we obtain the analytic continuation to the domain $\mathcal{D}^b$. 

For any $z \in \comp^-$, take a simple arc $\gamma$ contained in $\comp^-$ with endpoints 0,1 such that the simple closed curve $\widetilde{\gamma}:=\gamma \cup [0,1]$ surrounds $z$. Then from the residue theorem, we have 
$$
 \frac{1}{B(p,q)}\int_{\widetilde{\gamma}}\frac{1}{z-w}w^{p-1}(1-w)^{q-1}\,dw= \frac{2\pi i}{B(p,q)}z^{p-1}(1-z)^{q-1}, 
$$
showing (\ref{eqbeta}) since the left hand side is equal to $\widetilde{G}_{\bm{\beta}_{p,q}}(z)-G_{\bm{\beta}_{p,q}}(z)$. 
The proof of (2) is similar. 
\end{proof}

Differential equations for Cauchy transforms are crucial to show (\ref{diffA}) and (\ref{diffC}). 
\begin{lem}\label{dif}
 The Cauchy transforms $\widetilde{G}_{\bm{\beta}_{p,q}}, \widetilde{G}_{\bm{\beta}'_{p,q}}$ satisfy the following differential equations: 
\begin{align}
 &\frac{d}{dz}\widetilde{G}_{\bm{\beta}_{p,q}}(z)=\left( \frac{p-1}{z}+\frac{q-1}{z-1}\right) \widetilde{G}_{\bm{\beta}_{p,q}}(z)-\frac{p+q-1}{z(z-1)}, \quad z\in \comp^+, \label{beta}\\
&\frac{d}{dz}\widetilde{G}_{\bm{\beta}'_{p,q}}(z)= \left( \frac{p-1}{z}-\frac{p+q}{z+1}\right) \widetilde{G}_{\bm{\beta}'_{p,q}}(z)+\frac{q}{z(z+1)} \label{betap} \\ 
&~~~~~~~~~~~~\,~=\frac{q(q+1)}{(p+q)z}\left( \left( -z+\frac{p-1}{q+1} \right) \widetilde{G}_{\bm{\beta}'_{p,q+1}}(z)+1 \right), \quad z\in\comp^+. \label{betap2}
 \end{align}
\end{lem}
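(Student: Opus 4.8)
The plan is to derive these differential equations directly from the explicit hypergeometric representations obtained in Proposition \ref{prop113}, combined with the integral representation (\ref{int}) and the contiguous-relation machinery of Gauss hypergeometric functions. There are two natural routes, and I would probably present the more elementary one.

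\textbf{Route via the integral representation.} Recall from Proposition \ref{prop113}(1) that $\widetilde{G}_{\bm{\beta}_{p,q}}(z)=\frac{1}{z}F(1,p;p+q;z^{-1})$, or more transparently one simply has, for $z\in\comp^+$,
\[
\widetilde{G}_{\bm{\beta}_{p,q}}(z)=\frac{1}{B(p,q)}\int_0^1\frac{x^{p-1}(1-x)^{q-1}}{z-x}\,dx.
\]
First I would differentiate under the integral sign to get $\frac{d}{dz}\widetilde{G}_{\bm{\beta}_{p,q}}(z)=-\frac{1}{B(p,q)}\int_0^1\frac{x^{p-1}(1-x)^{q-1}}{(z-x)^2}\,dx$. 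The key algebraic step is to find constants (depending on $z$) so that $\frac{d}{dx}\!\left[\frac{x^p(1-x)^q}{z-x}\right]$ expands into a linear combination of $\frac{x^{p-1}(1-x)^{q-1}}{z-x}$, $\frac{x^{p-1}(1-x)^{q-1}}{(z-x)^2}$ and $x^{p-1}(1-x)^{q-1}$; integrating this exact derivative over $[0,1]$ kills the boundary term (since $p,q>0$ guarantee $x^p(1-x)^q$ vanishes at both endpoints) and produces a linear relation among $\widetilde{G}_{\bm{\beta}_{p,q}}$, its derivative, and the constant $1=\int x^{p-1}(1-x)^{q-1}/B(p,q)$. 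Matching the partial-fraction decomposition $\frac{1}{(z-x)^2}=\frac{1}{(z-x)}\cdot\frac{1}{z-x}$ and $\frac{x}{z-x}=\frac{z}{z-x}-1$, $\frac{1-x}{z-x}=\frac{1-z}{z-x}+1$ should reproduce exactly (\ref{beta}) after clearing the factor $z(z-1)$. The beta prime equation (\ref{betap}) is handled identically starting from $\widetilde{G}_{\bm{\beta}'_{p,q}}(z)=\frac{1}{B(p,q)}\int_0^\infty\frac{x^{p-1}(1+x)^{-p-q}}{z-x}\,dx$ and differentiating $\frac{d}{dx}\!\left[\frac{x^p(1+x)^{-p-q+1}}{z-x}\right]$; here I should double-check the decay at $x\to\infty$ (the integrand of the boundary term behaves like $x^{p}\cdot x^{-p-q+1}/(z-x)\sim x^{-q}$, which tends to $0$ since $q>0$) and the behaviour at $x=0$ (fine since $p>0$).

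\textbf{The contiguous relation (\ref{betap2}).} The second equality in the statement, re-expressing the derivative of $\widetilde{G}_{\bm{\beta}'_{p,q}}$ in terms of $\widetilde{G}_{\bm{\beta}'_{p,q+1}}$, I would obtain by substituting the hypergeometric form $\widetilde{G}_{\bm{\beta}'_{p,q}}(z)=\frac{q}{(p+q)z}F(1,p;1+p+q;1+\tfrac1z)$ from Proposition \ref{prop113}(2) into the already-proved first equality (\ref{betap}), and then invoking the Gauss relation (\ref{15.2.20}) (with $a=1$, $b=p$, $c=1+p+q$, argument $1+\tfrac1z$) to raise the lower parameter from $1+p+q$ to $2+p+q$, which is exactly what converts $F(\cdots;1+p+q;\cdot)$-type data into $\widetilde{G}_{\bm{\beta}'_{p,q+1}}$. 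This is pure bookkeeping with the contiguous relations collected in Section \ref{sec2}; alternatively one can verify it by a direct integration-by-parts identity linking $\int_0^\infty\frac{x^{p-1}(1+x)^{-p-q}}{z-x}dx$ and $\int_0^\infty\frac{x^{p-1}(1+x)^{-p-q-1}}{z-x}dx$.

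\textbf{Main obstacle.} None of the steps is deep; the real friction is purely computational — getting the right constants in the partial-fraction/exact-derivative identity and not dropping a sign when clearing denominators, especially in the beta prime case where the exponent $-p-q+1$ must be tracked carefully. I would organise the computation around the ansatz "write $\frac{d}{dx}\big[x^{p-1}(1-x)^{q-1}\cdot\frac{x(1-x)}{z-x}\big]$, integrate, use $\int_0^1(\cdots)=0$" so that the boundary terms are manifestly zero and only the algebra of rational functions in $z$ remains. The hypergeometric identities needed for (\ref{betap2}) are all listed in the excerpt, so that part reduces to citing (\ref{15.2.20}) with the correct parameter substitution.
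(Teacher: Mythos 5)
Your proposal is correct and follows essentially the same route as the paper: differentiate under the integral sign in $z$, integrate by parts in $x$, and reduce the resulting rational factors by the partial-fraction substitutions $x=z-(z-x)$, $1\mp x=(1\mp z)\pm(z-x)$; the second identity (\ref{betap2}) is in both cases obtained by substituting a contiguity recursion between $\widetilde{G}_{\bm{\beta}'_{p,q}}$ and $\widetilde{G}_{\bm{\beta}'_{p,q+1}}$ into (\ref{betap}). The one genuine (and mildly advantageous) difference is your organization of the integration by parts: you integrate the exact derivative $\frac{d}{dx}\bigl[x^{p}(1-x)^{q}/(z-x)\bigr]$, whose boundary terms vanish for all $p,q>0$, whereas the paper moves the $x$-derivative onto the density $x^{p-1}(1-x)^{q-1}$, which forces it first to assume $p,q>1$ (resp.\ $p>1$ for $\bm{\beta}'_{p,q}$) and then to remove this restriction by the analytic dependence of $\widetilde{G}$ on the parameters; your variant makes that extra continuation step unnecessary. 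For (\ref{betap2}) the paper proves the recursion $\widetilde{G}_{\bm{\beta}'_{p,q}}=\frac{q}{p+q}\bigl((1+z)\widetilde{G}_{\bm{\beta}'_{p,q+1}}-1\bigr)$ by a direct manipulation of the integrand ($1+z=(1+x)+(z-x)$), while you derive the same recursion from the Gauss contiguous relation (\ref{15.2.20}) applied to the hypergeometric form of Proposition \ref{prop113}(2) — both are routine and equivalent, and you correctly note the integral identity as a fallback.
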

\begin{proof}
Suppose first $p,q>1$. Then, by integration by parts,  
\[
\begin{split}
\frac{d}{dz}\widetilde{G}_{\bm{\beta}_{p,q}}(z)
 &= \frac{1}{B(p,q)}\int_{0}^1  \frac{-1}{(z-x)^2}x^{p-1}(1-x)^{q-1}dx \\
&=  \frac{1}{B(p,q)}\int_{0}^1  \frac{1}{z-x}\left( \frac{p-1}{x}  -\frac{q-1}{1-x}\right)x^{p-1}(1-x)^{q-1}dx. 
\end{split}
\]
By using the identities $\frac{1}{(z-x)x}  =\frac{1}{z}(\frac{1}{z-x} +\frac{1}{x})$ and $\frac{1}{(z-x)(1-x)}  =\frac{1}{1-z}(\frac{1}{z-x} -\frac{1}{1-x})$, we have 
\[
\begin{split}
\frac{d}{dz}\widetilde{G}_{\bm{\beta}_{p,q}}(z)
&=  \frac{1}{B(p,q)} \frac{p-1}{z}\int_{0}^1  \left( \frac{1}{z-x}  +\frac{1}{x}\right)x^{p-1}(1-x)^{q-1}dx \\
&~~~~~+\frac{1}{B(p,q)} \frac{q-1}{z-1}\int_{0}^1 \left( \frac{1}{z-x}  -\frac{1}{1-x}\right)x^{p-1}(1-x)^{q-1}dx \\
&=\left( \frac{p-1}{z}+\frac{q-1}{z-1}\right) \widetilde{G}_{\bm{\beta}_{p,q}}(z)+\frac{(p-1)B(p-1,q)}{B(p,q)}\frac{1}{z}-\frac{(q-1)B(p,q-1)}{B(p,q)}\frac{1}{z-1}\\
&=\left( \frac{p-1}{z}+\frac{q-1}{z-1}\right) \widetilde{G}_{\bm{\beta}_{p,q}}(z)-\frac{p+q-1}{z(z-1)}. 
\end{split}
\]
Since $\widetilde{G}_{\bm{\beta}_{p,q}}$ and its derivative depend analytically on $p,q>0$, the above differential equation holds for any $p,q>0$.

A similar argument is possible for $\bm{\beta}_{p,q}'$. Suppose first that $p>1$ and then we have   
\[
\begin{split}
\frac{d}{dz}\widetilde{G}_{\bm{\beta}'_{p,q}}(z)
&=  \frac{1}{B(p,q)} \frac{p-1}{z}\int_{0}^\infty  \left( \frac{1}{z-x}  +\frac{1}{x}\right)x^{p-1}(1+x)^{-p-q}dx \\
&~~~~~-\frac{1}{B(p,q)} \frac{p+q}{z+1}\int_{0}^\infty \left( \frac{1}{z-x}  +\frac{1}{1+x}\right)x^{p-1}(1+x)^{-p-q}dx \\
&= \left( \frac{p-1}{z}-\frac{p+q}{z+1}\right) \widetilde{G}_{\bm{\beta}'_{p,q}}(z)+\frac{(p-1)B(p-1,q+1)}{zB(p,q)}-\frac{(p+q)B(p,q+1)}{(z+1)B(p,q)}\\
&= \left( \frac{p-1}{z}-\frac{p+q}{z+1}\right) \widetilde{G}_{\bm{\beta}'_{p,q}}(z)+\frac{q}{z(z+1)}. 
\end{split}
\]
The above equation holds for $p,q>0$ too because of the analytic dependence on $p>0$. 

The second equality (\ref{betap2}) follows from the recursive relation
\begin{equation}\label{rec}
\widetilde{G}_{\bm{\beta}'_{p,q}}(z)
 =  \frac{q}{p+q}\left( (1+z)\widetilde{G}_{\bm{\beta}'_{p,q+1}}(z)-1 \right), 
\end{equation}
which is justified by the following calculation:  
\[
\begin{split}
\widetilde{G}_{\bm{\beta}'_{p,q}}(z)
 &= \frac{1}{B(p,q)}\int_{0}^\infty  \frac{1+z+x-z}{z-x}\frac{x^{p-1}}{(1+x)^{p+q+1}}dx \\ 
 &=  (1+z)\cdot \frac{B(p,q+1)}{B(p,q)}\cdot\frac{1}{B(p,q+1)}\int_{0}^\infty  \frac{1}{z-x}\frac{x^{p-1}}{(1+x)^{p+q+1}}dx \\
 &~~~~- \frac{B(p,q+1)}{B(p,q)}\cdot\frac{1}{B(p,q+1)}\int_{0}^\infty\frac{x^{p-1}}{(1+x)^{p+q+1}}dx \\ 
 &=  \frac{q}{p+q}\left( (1+z)\widetilde{G}_{\bm{\beta}'_{p,q+1}}(z)-1 \right). 
\end{split}
\]
%Note that $\frac{B(p,q+1)}{B(p,q)} = \frac{q}{p+q}$. 
\end{proof}

\begin{lem}\label{lem3}
\begin{enumerate}[\rm(1)]
\item The Cauchy transform of $\bm{\beta}_{p,q}$ satisfies conditions (\ref{diffA}) and (\ref{diffC}) for the domain $\mathcal{D}^b=\mathcal{E}^b$ if $p+q > 2$.\footnote{Condition (\ref{diffC}) holds under the weaker assumption $p+q \neq 1,2$, but we do not need this result.}

 \item The Cauchy transform of $\bm{\beta}'_{p,q}$ satisfies conditions (\ref{diffA}) and (\ref{diffC}) for the domain $\mathcal{D}^{bp}=\mathcal{E}^{bp}$ for any $p,q>0$.  
 \end{enumerate}
\end{lem}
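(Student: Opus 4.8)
The plan is to use the explicit first-order differential equations for $\widetilde G_{\bm\beta_{p,q}}$ and $\widetilde G_{\bm\beta'_{p,q}}$ from Lemma \ref{dif}, combined with the analytic-continuation formulas from Proposition \ref{prop78}, to pin down the zeros of the derivative of the Cauchy transform. The crucial observation is that $\widetilde G$ and its analytic continuation $G$ satisfy the \emph{same} differential equation throughout $\mathcal D^b$ (resp.\ $\mathcal D^{bp}$), because the right-hand sides of \eqref{beta} and \eqref{betap} are rational in $z$ and hence analytic on the whole slit plane; the continuation was defined precisely by analytically continuing a function that solves the ODE on $\comp^+$, so uniqueness of analytic continuation forces the ODE to persist. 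Concretely, I would first record that on all of $\mathcal D^b$ (resp.\ $\mathcal D^{bp}$) we have, writing $G=G_\mu$ for brevity,
\[
G'(z)=\Bigl(\tfrac{p-1}{z}+\tfrac{q-1}{z-1}\Bigr)G(z)-\tfrac{p+q-1}{z(z-1)},\qquad
G'(z)=\Bigl(\tfrac{p-1}{z}-\tfrac{p+q}{z+1}\Bigr)G(z)+\tfrac{q}{z(z+1)}.
\]

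Next I would argue the key point: if $z\in\mathcal D^b$ (resp.\ $\mathcal D^{bp}$) is a point where $G'(z)=0$, then the ODE forces an \emph{algebraic} relation between $G(z)$ and $z$, namely $G(z)=\frac{p+q-1}{(p-1)(z-1)+(q-1)z}$ in the beta case and $G(z)=\frac{-q}{(p-1)(z+1)-(p+q)z}=\frac{-q}{(p-1)-qz}$ in the beta prime case. For condition (\ref{diffC}) one assumes additionally $G(z)\in\real$, and for (\ref{diffA}) that $G(z)\in\comp^-$; in either case I want to derive a contradiction. The strategy is to feed the algebraic value of $G(z)$ back into the situation. For a real point $z\in(-\infty,0)\cup(1,\infty)$ on the boundary this would be immediate, but $z$ ranges over the open slit plane, so the cleaner route is: the differential equation has an explicit integrating factor, so $G$ can be written globally on the slit plane as $G(z)=z^{1-p}(1-z)^{1-q}\bigl(C-\int^z (p+q-1)w^{p-2}(1-w)^{q-2}\,dw\bigr)$ for a suitable constant $C$ and a suitable basepoint/branch, and similarly for beta prime. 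From this one reads off that a zero of $G'$ is equivalent to a zero of the bracketed antiderivative expression together with the prefactor being finite and nonzero; then the constraint $G(z)\in\real$ or $G(z)\in\comp^-$ can be tested against the known behavior of $G$ near the boundary slits, where the imaginary part is controlled by the density formulas \eqref{eqbeta}, \eqref{eqbetap}.

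Actually the slicker and more robust approach, which I would adopt, avoids integrating factors: suppose $G'(z_0)=0$ with $z_0\in\mathcal D$. Differentiate the ODE once more to get $G''(z_0)$ in terms of $G(z_0)$ and $z_0$ only (since $G'(z_0)=0$ kills the cross term), so
$G''(z_0)=\bigl(\tfrac{p-1}{z_0}+\tfrac{q-1}{z_0-1}\bigr)'G(z_0)-\bigl(\tfrac{p+q-1}{z_0(z_0-1)}\bigr)'$. More usefully: if $G(z_0)$ is real and $z_0$ is genuinely complex (say $z_0\in\comp^+$, the case $z_0\in\comp^-$ being symmetric via Schwarz reflection since the density is real), apply the open mapping / argument principle to $G$ near $z_0$: because $G'(z_0)=0$, $G$ is locally $k$-to-one with $k\ge 2$, so $G$ takes values in all four quadrants near $z_0$, in particular values in $\comp^+$ arbitrarily close to $z_0$ — but $z_0$ is an interior point of the domain and $G$ on $\mathcal D$ takes values in $\comp^-\cup\real\cup\comp^+$ freely, so that alone is not a contradiction; the contradiction instead comes from combining $G'(z_0)=0$ with the algebraic relation, which shows $z_0$ must satisfy $G(z_0)=R(z_0)$ for an explicit rational $R$, and then $G-R$ has a multiple zero or a forced sign pattern; the real verification that there is no solution in the slit plane reduces to checking the discriminant/monotonicity of a low-degree polynomial arising from eliminating $G(z_0)$ between the two relations $G'=0$ and the ODE differentiated once. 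I expect the main obstacle to be exactly this elimination step: showing that the resulting polynomial condition on $z_0$ has no root in $\comp\setminus((-\infty,0]\cup[1,\infty))$ (resp.\ $\comp\setminus(-\infty,0]$) for the stated parameter ranges $p+q>2$ (resp.\ all $p,q>0$) — this is where the precise hypotheses enter and where a careful but elementary case analysis, likely organized around the real axis and the use of Proposition \ref{prop08}(\ref{p2}) and the boundary density formulas, will be needed. The footnote's remark that (\ref{diffC}) needs only $p+q\neq1,2$ suggests the beta computation is essentially the vanishing analysis of $w^{p-2}(1-w)^{q-2}$'s antiderivative being $z$-dependent, with the degenerate exponents $p+q=1,2$ excluded, so I would structure the beta case to make that transparent and the beta prime case should then go through for all parameters because the analogous degeneracy $q=0$ is excluded a priori.
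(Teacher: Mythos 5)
Your setup coincides with the paper's: the ODEs of Lemma \ref{dif} persist on $\mathcal{D}^b$ and $\mathcal{D}^{bp}$ by analytic continuation, and a critical point $z_0$ of $G$ forces the pointwise algebraic relation $G(z_0)=R(z_0)$ for an explicit real M\"obius map $R$ (your beta formula is correct; in the beta prime case the denominator is $(p-1)-(q+1)z$, not $(p-1)-qz$). From there, however, the proposal does not close. The routes you sketch --- the integrating factor, differentiating the ODE once more to ``eliminate $G(z_0)$'', the open mapping digression --- are either left unexecuted or cannot work: differentiating the ODE at $z_0$ merely expresses $G''(z_0)$ in terms of $G(z_0)$ and $z_0$; it yields no second independent relation, so there is nothing to eliminate and no polynomial in $z_0$ alone to analyze. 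You flag exactly this step as ``the main obstacle'', and indeed it is not the right mechanism.

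The paper's contradiction is far more elementary and splits by the location of $z_0$. If $z_0\in\comp^+$, invert the forced relation: $F_\mu(z_0)=1/R(z_0)$ is an explicit \emph{affine} expression in $z_0$, to be tested against $\text{Im}\,F_\mu(z)\geq\text{Im}\,z$ (Proposition \ref{prop08}(\ref{p2})). For beta this gives $F(z_0)=\frac{p+q-2}{p+q-1}z_0-\frac{p-1}{p+q-1}$, whose imaginary part is $\frac{p+q-2}{p+q-1}\,\text{Im}\,z_0<\text{Im}\,z_0$ precisely because $p+q>2$ --- this is where the hypothesis enters, and your write-up never locates it. For beta prime the same test applied to $\bm{\beta}'_{p,q}$ itself yields $\text{Im}\,F=\frac{q+1}{q}\text{Im}\,z_0>\text{Im}\,z_0$, i.e.\ \emph{no} contradiction; the paper circumvents this by using the second form (\ref{betap2}) of the ODE, which turns $G'_{\bm{\beta}'_{p,q}}(z_0)=0$ into $F_{\bm{\beta}'_{p,q+1}}(z_0)=z_0-\frac{p-1}{q+1}$, the equality case of Proposition \ref{prop08}(\ref{p2}), forcing $\bm{\beta}'_{p,q+1}$ to be a Dirac mass --- absurd. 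Your sketch, which uses only the single equation (\ref{betap}), would therefore stall on the beta prime case in $\comp^+$. If instead $z_0\in\comp^-$ or $z_0$ lies in the real part of the domain, no inversion is needed: for $p+q>2$ the real M\"obius map $R$ sends $\comp^-$ into $\comp^+$ and real points into $\real\cup\{\infty\}$, which contradicts $G(z_0)\in\comp^-$ for condition (\ref{diffA}); for condition (\ref{diffC}) one notes that the relation keeps $G(z_0)$ off $\real$ when $z_0\in\comp^\pm$, while for $z_0$ in the support interval it would make $G(z_0)$ real although the Stieltjes inversion formula (\ref{eqtau}) places it in $\comp^-$. These three short observations constitute the entire proof, and none of them is carried out in your proposal.
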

\begin{proof} 
(1)\,\,\, By analyticity, the differential equation (\ref{beta}) holds for $G_{\bm{\beta}_{p,q}}$ in $\mathcal{D}^b$. 
Assume that $z \in \mathcal{D}^b$, $G_{\bm{\beta}_{p,q}}'(z)=0$ and $G_{\bm{\beta}_{p,q}}(z)\in\comp^-$ at the same time. 
The differential equation in Lemma \ref{dif} implies 
\begin{equation} \label{eq980}
G_{\bm{\beta}_{p,q}}(z) = \frac{p+q-1}{(p+q-2)z -p+1}.
\end{equation} 
If $z \in \comp^+$, then $ F_{\bm{\beta}_{p,q}}(z) = \frac{p+q-2}{p+q-1}z -\frac{p-1}{p+q-1}$, which contradicts Proposition \ref{prop08}(\ref{p2}). If $z\in \comp^- \cup (0,1)$, then $\displaystyle G_{\bm{\beta}_{p,q}}(z)\in \comp^+ \cup \real \cup \{\infty\}$ from (\ref{eq980}), a contradiction to the assumption. 
This argument verifies condition (\ref{diffA}). 

Condition (\ref{diffC}) is similar. Assume that $z\in\mathcal{E}^b$ and $\frac{d}{dz}G_{\bm{\beta}_{p,q}}(z)=0$. (i) If $z\in(0,1)$, then $G_{\bm{\beta}_{p,q}}(z) \in \comp^-$ from the Stieltjes inversion formula (\ref{eqtau}). This contradicts (\ref{eq980}), so $z\in(0,1)$ never happens.  
(ii) If $z \in \comp^+$, then $G_{\bm{\beta}_{p,q}}(z) \in \comp^-$.  
(iii) If $z\in\comp^-$, then $G_{\bm{\beta}_{p,q}}(z)$ belongs to $\comp^+$ from (\ref{eq980}). Therefore the assumption $\frac{d}{dz}G_{\bm{\beta}_{p,q}}(z)=0, z \in \mathcal{E}^b$ implies that $G_{\bm{\beta}_{p,q}}(z) \notin \real$.

(2)\,\,\, A similar reasoning applies to $G_{\bm{\beta}'_{p,q}}$. Now assume that $z\in\mathcal{D}^{bp}$, $G_{\bm{\beta}'_{p,q}}(z)\in\comp^-$ and $\frac{d}{dz}G_{\bm{\beta}'_{p,q}}(z)=0$. It follows that 
$$
G_{\bm{\beta}'_{p,q}}(z) = \frac{q}{(q+1)z -p+1},~~~G_{\bm{\beta}'_{p,q+1}}(z) = \frac{q+1}{(q+1)z -p+1}.
$$ 
If $z \in \comp^+$, then $F_{\bm{\beta}'_{p,q+1}}(z) = z -\frac{p-1}{q+1}$, a contradiction to Proposition \ref{prop08}(\ref{p2}), since $\bm{\beta}'_{p,q+1}$ is not a delta measure. 
If $z \in \comp^- \cup (0,\infty)$, then $ G_{\bm{\beta}'_{p,q}}(z) = \frac{q}{(q+1)z -p+1} \in \comp^+ \cup \real\cup\{\infty\}$ which again contradicts the assumption. 

Condition (\ref{diffC}) is also similar. 
\end{proof}

Theorem \ref{thm1}(\ref{thm1-1}) follows from the following stronger fact. 
\begin{thm}\label{thm11}
The beta distribution $\bm{\beta}_{p,q}$ is in $\mathcal{UI}$ in the following cases: (i) $p, q \geq \frac{3}{2}$; (ii) $0<p\leq \frac{1}{2},~p+q \geq 2$; (iii) $0<q\leq \frac{1}{2},~p+q \geq 2$. 
\end{thm}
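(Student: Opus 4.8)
\textbf{Proof proposal for Theorem \ref{thm11}.}

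The plan is to verify condition (\ref{A}) for $\mu = \bm{\beta}_{p,q}$ with the domain $\mathcal{D}^b = \comp \setminus\left( (-\infty,0] \cup [1,\infty)\right)$, since then Proposition \ref{prop91}(1) immediately gives $\bm{\beta}_{p,q} \in \mathcal{UI}$. Condition (\ref{merA}) for this $\mathcal{D}^b$ is already supplied by Proposition \ref{prop78}(1) (in fact $G_{\bm{\beta}_{p,q}}$ extends analytically, not merely meromorphically), and condition (\ref{diffA}) is supplied by Lemma \ref{lem3}(1), whose hypothesis $p+q>2$ is satisfied in all three cases (i)--(iii) except possibly on the boundary $p+q=2$; I would handle the boundary line either by noting that $\mathcal{UI}$ is closed under weak limits and $\bm{\beta}_{p,q}$ depends weakly continuously on $(p,q)$, approaching the line from $p+q>2$, or by checking directly that the excluded values $p+q=1,2$ cause no zero of $G'$ in $\comp^-$ via (\ref{eq980}). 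So the real work is condition (\ref{barrierA}): I must show that whenever a sequence $(z_n) \subset \mathcal{D}^b$ tends to a point of $\partial \mathcal{D}^b \cup \{\infty\} = (-\infty,0] \cup [1,\infty) \cup \{\infty\}$, the limit of $G_{\bm{\beta}_{p,q}}(z_n)$ lies in $\comp^+ \cup \real \cup \{\infty\}$.

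First I would dispose of $z_n \to \infty$: by Proposition \ref{prop08}(\ref{p3}), $G_\mu(z_n) \to 0 \in \real$. Next, for $z_n$ approaching an interior point $x_0 \in (0,1)$, the density is continuous there, so the boundary values from $\comp^+$ and from $\comp^-$ are $\widetilde{G}_{\bm{\beta}_{p,q}}(x_0) \mp \pi i \frac{1}{B(p,q)} x_0^{p-1}(1-x_0)^{q-1}$ by the Stieltjes inversion formula and (\ref{eqbeta}); the upper one lies in $\comp^+ \cup \real$, and the lower one lies in $\comp^-$ --- so I must be slightly more careful and observe that a sequence in $\mathcal{D}^b$ converging to $x_0 \in (0,1)$ need not approach from one side, but $\mathcal{D}^b$ near $x_0$ is a neighborhood of $x_0$ minus the real segment, and since $G_{\bm{\beta}_{p,q}}$ is analytic in all of $\mathcal{D}^b$ it is actually continuous at $x_0$ from within $\mathcal{D}^b$ with the single value $G_{\bm{\beta}_{p,q}}(x_0)$; the apparent two-sidedness is illusory because the continuation across $(0,1)$ from above and from below genuinely agree there (the jump formula (\ref{eqbeta}) holds only in $\comp^-$, and on $(0,1)$ itself the extension is the one single-valued analytic function on the slit-complement). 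Hence the limit is simply $G_{\bm{\beta}_{p,q}}(x_0)$, which is finite and real since the density cancellation makes the principal-value integral real --- wait, it is real precisely because for $x_0 \in (0,1)$ the continued function restricted to the real axis takes real values, which one reads off from (\ref{eqbeta}) by symmetry of the two limits, or directly: the analytic continuation to $\mathcal{D}^b$ is real on $(0,1)$ because the reflection $z \mapsto \bar z$ maps $\mathcal{D}^b$ to itself fixing $(0,1)$ and $\overline{G_{\bm{\beta}_{p,q}}(\bar z)} = G_{\bm{\beta}_{p,q}}(z)$ by Schwarz reflection, forcing real values on the real segment.

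The heart of the matter is then the behavior at the endpoints $x_0 \in \{0,1\}$ and along the cuts $(-\infty,0)$ and $(1,\infty)$, where the power function $z^{p-1}(1-z)^{q-1}$ develops branch-point singularities. Near $z=0$, $G_{\bm{\beta}_{p,q}}(z) \sim \text{(const)}\, z^{p-1}$ plus lower-order terms when $p<1$ (so $G \to \infty$), while for $p \geq 1$ one gets a finite or logarithmic contribution; in case (i) $p \geq 3/2 > 1$ so $G \to$ finite value, in cases (ii),(iii) one of $p,q$ is $\leq 1/2$ and one needs $G \to \infty$, which is permitted. For $z_n \to x_0 \in (-\infty,0)$ from $\comp^+$ versus $\comp^-$, the two one-sided limits differ by the jump $-\frac{2\pi i}{B(p,q)}z^{p-1}(1-z)^{q-1}$; I must check that on $(-\infty,0)$ the real and imaginary parts of the boundary value land in the closed upper half-plane or real axis, which amounts to a sign/argument computation for the principal branch of $z^{p-1}(1-z)^{q-1}$ when $\arg z = \pi$ (i.e.\ $z$ negative, approached from above) and when $\arg z = -\pi$ (from below) --- and analogously on $(1,\infty)$ using $1-z$ negative. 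The argument constraints $p \geq 3/2, q \geq 3/2$ in case (i) and $p \leq 1/2$ (resp.\ $q \leq 1/2$) with $p+q\geq 2$ in cases (ii),(iii) are exactly what force the relevant arguments into the correct range: this is the delicate bookkeeping step, and I expect it to be \emph{the main obstacle}, requiring one to use the explicit hypergeometric representation of $G_{\bm{\beta}_{p,q}}$ together with the connection formulas (\ref{15.3.7}), (\ref{15.3.9}) to read off the precise asymptotics of the continued Cauchy transform near each branch point and along each ray, and then to combine the principal-value (real, by the Schwarz-reflection remark above applied to the slit complement) part with the explicitly signed branch-cut jump to conclude the limit lies in $\comp^+ \cup \real \cup \{\infty\}$.
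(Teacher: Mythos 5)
Your overall architecture (verify condition (\ref{A}) and invoke Proposition \ref{prop91}(1)) matches the paper, and your identification of (\ref{merA}) with Proposition \ref{prop78} and of (\ref{diffA}) with Lemma \ref{lem3} is correct. But the step you defer as ``delicate bookkeeping'' is exactly where the plan breaks: condition (\ref{barrierA}) is \emph{false} for the domain $\mathcal{D}^b$ over most of the parameter range of the theorem. By (\ref{eq006}), a sequence approaching a point $x<0$ from within $\comp^-\subset\mathcal{D}^b$ gives $\lim_n G_{\bm{\beta}_{p,q}}(z_n)=G_{\bm{\beta}_{p,q}}(x-i0)$ with imaginary part $\frac{2\pi}{B(p,q)}\cos(\pi p)\,|x|^{p-1}(1-x)^{q-1}$, and similarly with $\cos(\pi q)$ on $(1,\infty)$. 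For $p=3$ or $q=3$, say, this is strictly negative, so the limit lies in $\comp^-$ and (\ref{barrierA}) fails; the hypotheses $p,q\ge\frac{3}{2}$ (or $p\le\frac{1}{2}$, $p+q\ge2$) do \emph{not} force these boundary values into $\comp^+\cup\real\cup\{\infty\}$. The paper records this obstruction explicitly in Remark \ref{rem09}: $\mathcal{D}^b$ satisfies (\ref{barrierA}) only when $(p,q)$ lies in $\bigl(\bigl(0,\tfrac12\bigr)\cup\bigcup_{n\ge1}\bigl(2n-\tfrac12,2n+\tfrac12\bigr)\bigr)^2$, which misses large portions of all three cases (i)--(iii).

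The missing idea is that one must \emph{shrink the domain}. Using the auxiliary condition (\ref{C}) (that $G'\ne0$ wherever $G$ is real, Lemma \ref{lem3}) together with the local asymptotics (\ref{eq124}) near $z=0$ and $z=1$ obtained from the connection formulas (\ref{15.3.7}), (\ref{15.3.9}), the paper constructs simple curves in $\comp^-$ emanating from $0$ and $1$ on which $G_{\bm{\beta}_{p,q}}$ takes real values, and takes $\mathcal{D}$ to be the Jordan domain $D(C)$ bounded by $(-\infty,0]$, these curves, and $[1,\infty)$. On that boundary $G$ is real or $\infty$ by construction, so (\ref{barrierA}) holds; showing the curves terminate correctly (at $\infty$, or at the other branch point) is precisely what requires the preparatory facts (\ref{eq12})--(\ref{eq15}). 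Without this construction your argument cannot close. A secondary error: your Schwarz-reflection claim that the continuation of $G_{\bm{\beta}_{p,q}}$ is real on $(0,1)$ is false --- $(0,1)$ is interior to $\mathcal{D}^b$ (so irrelevant to (\ref{barrierA})), and there the continuation coincides with $\widetilde{G}_{\bm{\beta}_{p,q}}(x+i0)$, whose imaginary part is $-\pi$ times the strictly positive density; the identity $\overline{G(\bar z)}=G(z)$ holds for $\widetilde{G}$ on $\comp\setminus\real$ but not for the continued $G$ on $\mathcal{D}^b$, so reflection cannot be used to control boundary values on the cuts either.
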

\begin{proof}
 Assume moreover that $p,q \notin \mathbb{Z}$, $p,q \notin\{\frac{1}{2}, \frac{3}{2}, \frac{5}{2}, \cdots\}$ and $p+q>2$, because these assumptions simplify the proof. We can recover these exceptions by using the fact that the class $\mathcal{UI}$ is closed with respect to the weak convergence \cite{AHb}. 

\emph{Step 1}. In order to construct a domain $\mathcal{D}$ satisfying condition (\ref{A}), we show the following as preparation: 
\begin{align} 
& G_{\bm{\beta}_{p,q}}(x - i0) \notin \real,~~ x \in\real\setminus\{0,1\},\label{eq12} \\ 
& \lim_{z\to 0, z \in \mathcal{E}^b}G_{\bm{\beta}_{p,q}}(z)
=\begin{cases}\label{eq13} 
\infty, &0<p < \frac{1}{2},\\
-\frac{p+q-1}{p-1},& p > \frac{3}{2}, 
\end{cases}\\
& \lim_{z\to 1, z \in \mathcal{E}^b}G_{\bm{\beta}_{p,q}}(z)
=\begin{cases}\label{eq14} 
\infty, &0<q < \frac{1}{2},\\
\frac{p+q-1}{q-1},& q > \frac{3}{2}, 
\end{cases} \\
& \lim_{z \to \infty, z\in\comp^{-}}G_{\bm{\beta}_{p,q}}(z)=\infty. \label{eq15}
\end{align}

(\ref{eq12})\,\,\, From the Stieltjes inversion formula (\ref{eqtau}), we have $\text{Im}\, G_{\bm{\beta}_{p,q}}(x-i0) <0$ for $x\in (0,1)$. 
From Proposition \ref{prop78} and the Stieltjes inversion formula, it follows that 
\begin{equation}\label{eq006}
\text{Im}\, G_{\bm{\beta}_{p,q}}(x-i0)
= 
\begin{cases}
\frac{2\pi}{B(p,q)}\cos(\pi p) |x|^{p-1}(1-x)^{q-1},&x<0, \\[5pt]
\frac{2\pi}{B(p,q)}\cos(\pi q) x^{p-1}(x-1)^{q-1},&x>1. 
\end{cases}
\end{equation}
Hence $\text{Im}\, G_{\bm{\beta}_{p,q}}(x-i0)\neq 0$ for $x\in(-\infty, 0)\cup(1,\infty)$ since we have assumed $p,q \notin\{\frac{1}{2}, \frac{3}{2}, \frac{5}{2}, \cdots\}$. 

(\ref{eq13})\,\,\, From (\ref{15.3.7}) and (\ref{15.3.3}) one has
%\footnote{When $p \in \mathbb{Z}$, the following identity requires modification because $F(a,b;c;z)$ is not defined for $c \in\{0,-1,-2,\cdots\}$, but we avoid this case.}
\begin{equation}\label{eq123}
\begin{split}
G_{\bm{\beta}_{p,q}}(z)
&=\frac{1}{z}  F(1,p;p+q; z^{-1}) \\
&= \frac{1}{z}\left(\frac{\Gamma(p+q)\Gamma(p-1)}{\Gamma( p)\Gamma(p+q-1)} \left(-\frac{1}{z}\right)^{-1} F(1,2-p-q;2-p;z) \right. \\
&~~~~~~~~+\left.\frac{\Gamma(p+q)\Gamma(1-p)}{\Gamma(1)\Gamma( q)}  \left(-\frac{1}{z}\right)^{-p} F(p,1-q;p;z) \right)\\
&= -\frac{p+q-1}{p-1}F(1,2-p-q;2-p;z) -\frac{\pi}{B(p,q)\sin\pi p}   (-z)^{p-1}(1-z)^{q-1},  
\end{split}
\end{equation}
which is valid in $\comp^+$. By using the integral representation (\ref{int}), the RHS of (\ref{eq123}) is analytic in $\mathcal{E}^b$ and hence   gives the analytic continuation of $G_{\bm{\beta}_{p,q}}$ as claimed in Proposition \ref{prop78}. 
Eq.\ (\ref{eq13}) easily follows from (\ref{eq123}).

(\ref{eq14})\,\,\, We now use formula (\ref{15.3.9}) to obtain 
\begin{equation}\label{eq1230}
\begin{split}
G_{\bm{\beta}_{p,q}}(z)
&= \frac{p+q-1}{q-1}F(1,2-p-q;2-q;1-z) +\frac{\pi}{B(p,q)\sin\pi q} z^{p-1}(z-1)^{q-1},   
\end{split}
\end{equation}
which is valid in $\comp^+$. Since the RHS is analytic in $\mathcal{E}^b$, it gives  the analytic continuation of $G_{\bm{\beta}_{p,q}}$ as claimed in Proposition \ref{prop78}. Eq.\ (\ref{eq14}) follows from (\ref{eq1230}). 

(\ref{eq15})\,\,\, Finally, (\ref{eq15}) follows from Proposition \ref{prop78}. Note that $\lim_{z\to\infty, z\in\comp^-}\widetilde{G}_{\bm{\beta}_{p,q}}(z)=0$ because $\bm{\beta}_{p,q}$ is compactly supported. 

From (\ref{eq123}), the asymptotics of $G_{\bm{\beta}_{p,q}}$ as $z\to0$ is as follows:  
\begin{equation}\label{eq124}
G_{\bm{\beta}_{p,q}}(z)
= 
\begin{cases}
- \frac{\pi}{B(p,q) \sin\pi p} (-z)^{p-1}+o(|z|^{p-1}), & 0<p<\frac{1}{2},\\[5pt]
-\frac{p+q-1}{p-1} - \frac{\pi}{B(p,q) \sin\pi p} (-z)^{p-1} +o(|z|^{p-1}),&\frac{3}{2}<p<2,  \\[5pt]
-\frac{p+q-1}{p-1} -\frac{(p+q-1)(p+q-2)}{(p-1)(p-2)}z +o(|z|),&p>2.
\end{cases}
\end{equation}

\emph{Step 2.} 
We are going to find a simple curve $C \subset \real \cup \comp^-$ such that $G_{\bm{\beta}_{p,q}}$ maps $C \cup \{\infty\}$ into $\real\cup\{\infty\}$. 
If such a curve exists, then the Jordan domain $\comp^+ \subset D(C{}) \subset \mathcal{E}^b$ surrounded by $C\cup\{\infty\}$ satisfies condition (\ref{A}), so that we can use Proposition \ref{prop91}.

\textbf{Case $p,q >\frac{3}{2}$.} From elementary calculus, $G_{\bm{\beta}_{p,q}}(x+i0)$ is injective on $(-\infty,0] \cup [1,\infty)$, taking real values, and we have $G_{\bm{\beta}_{p,q}}((-\infty,0]+i0)=[-\frac{p+q-1}{p-1},0)$ and $G_{\bm{\beta}_{p,q}}([1,\infty)+i0)=(0,\frac{p+q-1}{q-1}]$. 

First we consider the case $\frac{3}{2}<p<2$. Let $\mathcal{S}_{J}(R{})$ be the sector $\{z\in \comp\setminus\{0\}: \arg z \in J,|z|<R\}$ and $\mathcal{S}_{J}$ be $\{z\in \comp\setminus\{0\}: \arg z \in J\}$ for $J \subset (-\pi, \pi)$.\footnote{This definition will be generalized in Section \ref{subsec12}.} 
The function $-\frac{\pi}{B(p,q) \sin\pi p} (-z)^{p-1}$ maps the sector $\mathcal{S}_{\left(-\frac{2-p}{p-1}\pi, \pi\right)}$ bijectively onto $\comp^-$, mapping the half line $e^{-i\frac{2-p}{p-1}\pi}(0,\infty)$ onto $(-\infty,0).$ 
Even under the perturbation $o(|z|^{p-1})$, for any small $\eta >0$ satisfying $-\pi<-\frac{2-p}{p-1}\pi -\eta<-\frac{2-p}{p-1}\pi +\eta <0$, we can find $\varepsilon, \delta>0$ such that $G_{\bm{\beta}_{p,q}}$ maps the boundary of  $\mathcal{S}_{\left(-\frac{2-p}{p-1}\pi -\eta, -\frac{2-p}{p-1}\pi +\eta\right)}(\varepsilon)$ to a curve surrounding each point of $(-\frac{p+q-1}{p-1}-\delta,-\frac{p+q-1}{p-1})$ just once. Hence, we can find a curve $c_1^\delta \subset \mathcal{S}_{\left(-\frac{2-p}{p-1}\pi -\eta, -\frac{2-p}{p-1}\pi +\eta\right)}(\varepsilon)$ such that $G_{\bm{\beta}_{p,q}}(c_1^\delta)=(-\frac{p+q-1}{p-1}-\delta,-\frac{p+q-1}{p-1})$. 
By letting $\varepsilon,\delta$ smaller, we know that an endpoint of $c_1^\delta$ is $0$. 
%This curve is tangent to the half line $\{re^{-i\frac{2-p}{p-1}\pi }: r\geq 0\}$ at 0. 
For $p>2$, we can find a curve $c_1^\delta$ similarly.

As in Proposition \ref{prop91}, we can prolong the curve $c_1^\delta$ by using property (\ref{diffC}), to obtain a maximal curve $c_1$ that is mapped into $(-\infty,-\frac{p+q-1}{p-1})$ injectively by $G_{\bm{\beta}_{p,q}}$. Suppose $G_{\bm{\beta}_{p,q}}(c_1)=(x_0,-\frac{p+q-1}{p-1})$ for some $x_0\in (-\infty,-\frac{p+q-1}{p-1})$. For each point $x\in (x_0,-\frac{p+q-1}{p-1})$, let $u \in c_1$ denote the preimage of $x$. 
 The following cases are possible: 
\begin{enumerate}[\rm(i)] 
\item\label{casei} When $x$ converges to $x_0$, the preimages $u$ have an accumulative point $u_0$ in $\mathcal{E}^b$; 

\item\label{caseii}  When $x$ converges to $x_0$, the preimages $u$ have an accumulative point $u_1$ in $\partial \mathcal{E}^b \cup \{\infty \}$.  
\end{enumerate}
In the case (\ref{casei}), we can extend the curve $c_1$ more because of condition (\ref{diffC}) and the obvious fact $G_{\bm{\beta}_{p,q}}(u_0)=x_0$; a contradiction to the maximality of $c_1$.

In the case (\ref{caseii}), property (\ref{eq12}) implies that $c_1$ cannot approach $\real\setminus\{0,1\}$ and so $u_1\notin (-\infty,0) \cup (1,\infty)$. Moreover, from (\ref{eq13}) and (\ref{eq14}), 
 $u_1$ never be $0$ or $1$ because $G_{\bm{\beta}_{p,q}}$ is injective in $c_1$.  Thus we conclude that $u_1=\infty$ and so $x_0 = -\infty$, a contradiction to the assumption $x_0 >-\infty$. Consequently, $G_{\bm{\beta}_{p,q}}(c_1)$ coincides with $(-\infty,-\frac{p+q-1}{p-1})$ and $c_1$ connects $0$ and $\infty$.\footnote{Here we also need the fact that $G_{\bm{\beta}_{p,q}}$ is analytic in $\mathcal{E}^b$, not only meromorphic.}

Similarly, starting from $1$, we get a curve $c_2$ connecting $1$ and $\infty$ such that $G_{\bm{\beta}_{p,q}}$ maps $c_2$ bijectively onto $(\frac{p+q-1}{q-1}, \infty)$. Therefore, $G_{\bm{\beta}_{p,q}}$ maps  $C:=((-\infty,0]+i0) \cup c_1 \cup c_2 \cup ([1,\infty)+i0)\cup\{\infty\}$ bijectively onto $\real\cup \{\infty\}$ and so $\bm{\beta}_{p,q} \in \mathcal{UI}$. 

\begin{figure}[htpb]
\begin{minipage}{0.5\hsize}
\begin{center}
\includegraphics[width=0.82\linewidth,clip]{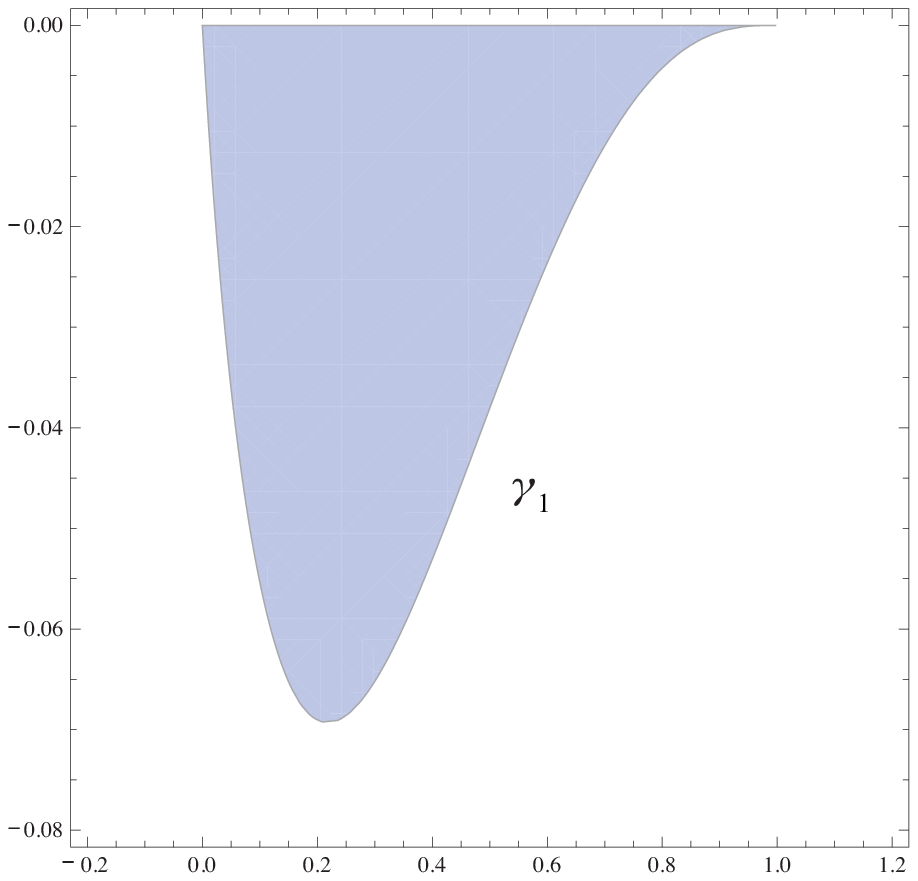}
\caption{A curve for $\bm{\beta}_{0.2,4}$. Figures are drawn by Mathematica 8.0 and Illustrator CS6.}\label{b1}
\end{center}
\end{minipage}
\begin{minipage}{0.5\hsize}
\begin{center}
\includegraphics[width=0.86\linewidth,clip]{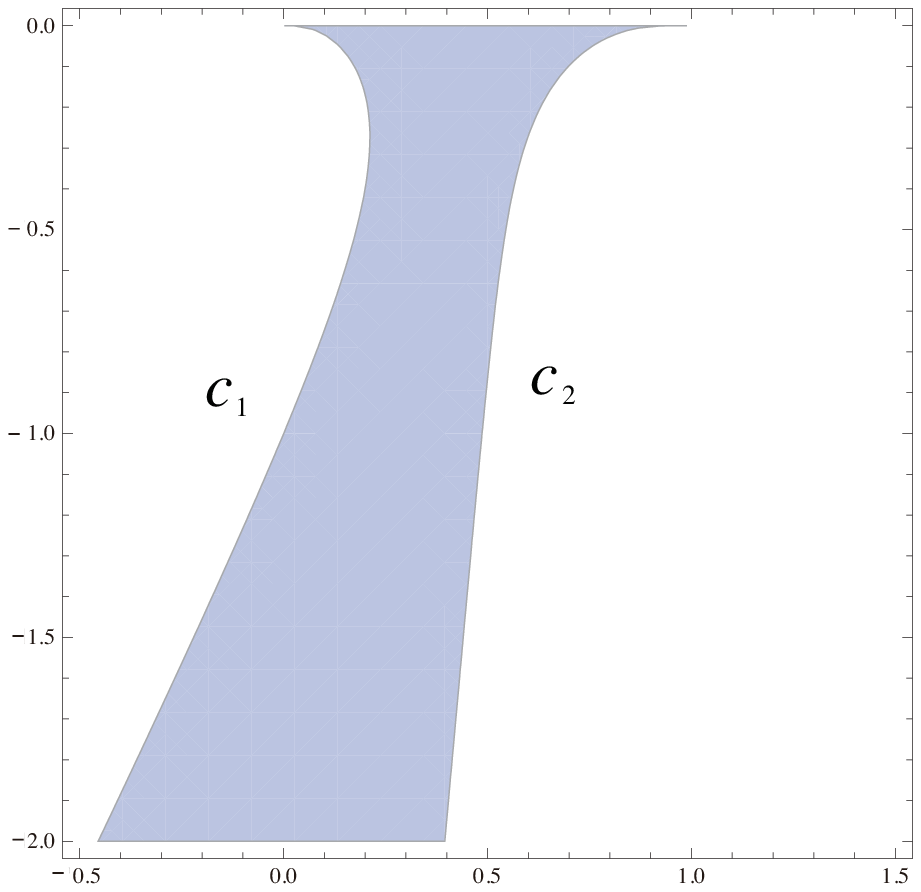}
\caption{Curves for $\bm{\beta}_{4.5,6}$}\label{b2}
\end{center}
\end{minipage}
\end{figure}

\textbf{Case $0<p<\frac{1}{2},~p+q>2$.}  From elementary calculus, the map $x\mapsto G_{\bm{\beta}_{p,q}}(x+i0)$ is injective on $(-\infty,0] \cup [1,\infty)$, taking real values, and we have $G_{\bm{\beta}_{p,q}}((-\infty,0]+i0)=(-\infty, 0]$, $G_{\bm{\beta}_{p,q}}([1,\infty)+i0)=(0,\frac{p+q-1}{q-1}]$. 

The function $-\frac{\pi}{B(p,q) \sin\pi p} (-z)^{p-1}$ now maps the sector $\mathcal{S}_{\left(-\frac{p}{1-p}\pi, \pi\right)}$ bijectively onto $\comp^-$, mapping the half line $e^{-\frac{p\pi i}{1-p}}(0,\infty)$ onto $(0,\infty).$ 
Similarly to $c_1^\delta$, for any $\eta>0$ such that $-\pi < -\frac{p}{1-p}\pi -\eta< -\frac{p}{1-p}\pi +\eta <0$, there exist $\varepsilon, R>0$ and a curve $\gamma_1^R \subset \mathcal{S}_{\left(-\frac{p}{1-p}\pi -\eta, -\frac{p}{1-p}\pi +\eta\right)}(\varepsilon)$, starting from $0$, such that $G_{\bm{\beta}_{p,q}}(\gamma_1^R)=(R,\infty)$. We can prolong the curve $\gamma_1^R$ by using property (\ref{diffC}), to obtain a maximal curve $\gamma_1$ which is mapped by $G_{\bm{\beta}_{p,q}}$ into $(-\infty,\infty)$ injectively, and it has an endpoint $0$. The other endpoint, denoted by $v_1$, is $0$, $1$ or $\infty$. 

The case $v_1=0$ means $\gamma_1$ starts from 0 and goes back to $0$ again. This never happens because the dominating term $-\frac{\pi}{B(p,q) \sin\pi p} (-z)^{p-1}$ defined in $\mathcal{E}^b$ takes real values only on the half line $e^{-\frac{p\pi i}{1-p}}(0,\infty)$. 

If $v_1=1$, then the curve $C :=((-\infty,0]+i0) \cup \gamma_1 \cup( [1,\infty)+i0)$ is simple and $G_{\bm{\beta}_{p,q}}$ bijectively maps $C\cup\{\infty\}$ onto $\real\cup\{\infty\}$ as in Fig.\ \ref{b1}.

If $v_1=\infty$,\footnote{Numerical computation indicates that this case never happens (see Fig.\ \ref{b1}).}  
then $G_{\bm{\beta}_{p,q}}(\gamma_1) = (-\infty,\infty)$ from (\ref{eq15}). In this case, we will construct another curve $\gamma_2$ starting from $1$ such that $G_{\bm{\beta}_{p,q}}(\gamma_2) \subset (\frac{p+q-1}{q-1},\infty)$. 
If the other endpoint of $\gamma_2$ is $0$, then the curve $((-\infty,0]+i0) \cup \gamma_2 \cup ([1,\infty)+i0)$ is enough for our purpose. If the other endpoint is $\infty$, then $G_{\bm{\beta}_{p,q}}(\gamma_2)=(\frac{p+q-1}{q-1},\infty)$ and $G_{\bm{\beta}_{p,q}}$ is not injective on $\gamma_1 \cup \gamma_2$. Let $\gamma:=\gamma_1 \cup \gamma_2$ if $\gamma_1$ and $\gamma_2$ are disjoint. If $\gamma_1$ and $\gamma_2$ cross, then we define a simple curve $\gamma$ starting from $0$, going along $\gamma_1$ until the first crossing point, and then along $\gamma_2$, to arrive at $1$. Now the Jordan domain surrounded by $\Gamma:=((-\infty,0]+i0) \cup \gamma \cup( [1,\infty)+i0)$ satisfies condition (\ref{A}), and hence $\bm{\beta}_{p,q} \in \mathcal{UI}$ from Proposition \ref{prop91}.  

The case $0<q<\frac{1}{2}, p+q>2$ is similar by symmetry. 
\end{proof}

\begin{rem}
Let $\Gamma$ be a Jordan closed curve in $\comp$ and $f$ be analytic in $d(\Gamma)$, the bounded Jordan domain surrounded by $\Gamma$, and suppose $f$ is continuous on $\overline{d(\Gamma)}$. If $f$ is injective on $\Gamma$, then $f$ maps $\overline{d(\Gamma)}$ bijectively onto the image $f(\overline{d(\Gamma)})$ and $f(d(\Gamma))$ is the domain surrounded by the Jordan curve $f(\Gamma)$; see \cite{B79}, p.\ 309, 310. 
However, this result does not hold for unbounded domains. Take $f(z):=z^2$ for instance. Now $f$ maps $i(-\infty,0]\cup[0,\infty) \cup\{\infty\}$ bijectively onto $\real \cup \{\infty\}$, but $f$ does not map $\{z\in\comp: \arg z \in(0,\frac{3\pi}{2})\}$ onto $\comp^+$ bijectively. 
In case $p,q >\frac{3}{2}$, we have constructed simple curves $c_1, c_2$, starting from $0$ and $1$ respectively, such that $G_{\bm{\beta}_{p,q}}$ maps the curve(s) $C=(-\infty,0] \cup c_1 \cup c_2 \cup [1,\infty)\cup\{\infty\}$ bijectively onto $\real\cup \{\infty\}$. This however is not sufficient to conclude that $G_{\bm{\beta}_{p,q}}$ maps $D(C{})$ bijectively onto $\comp^+$. So we need to use Proposition \ref{prop91}. 
\end{rem}
\begin{rem}\label{rem09}
The computation (\ref{eq006}) shows that the domain $\mathcal{D}^b$ satisfies condition (\ref{barrierA}) if $(p,q)\in \mathcal{R}:=\left(\left(0,\frac{1}{2}\right) \cup \bigcup_{n=1}^\infty \left(2n- \frac{1}{2}, 2n+\frac{1}{2}\right)\right)^2$. Hence, if $(p,q)$ is in the closure of $\mathcal{R}\cap\{(x,y): x+y>2\}$, the proof is much shorter because one does not need Step 2. However, if $(p,q) \notin\mathcal{R}$, the domain $\mathcal{D}^b$ does not satisfy (\ref{barrierA}). In this case, we need condition (\ref{C}) to find an alternative domain $\mathcal{D}$ for condition (\ref{A}). Such a domain $\mathcal{D}$ was realized as $D({C})$. 
\end{rem}

A similar proof applies for beta prime distributions too. 

\begin{thm}\label{thm12} 
The beta prime distribution $\bm{\beta}'_{p,q}$ belongs to class $\mathcal{UI}$ if  $p\in (0,\frac{1}{2}] \cup [\frac{3}{2}, \infty)$. 
\end{thm}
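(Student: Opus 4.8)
The plan is to mimic the proof of Theorem \ref{thm11}: exhibit a domain $\mathcal{D}$ with $\comp^+\subset\mathcal{D}\subset\comp$ on which $G_{\bm{\beta}'_{p,q}}$ satisfies condition (\ref{A}), and conclude $\bm{\beta}'_{p,q}\in\mathcal{UI}$ by Proposition \ref{prop91}(1). First I would reduce to generic $p$, say $p\notin\{\tfrac12,\tfrac32,\tfrac52,\dots\}$, using that $\bm{\beta}'_{p,q}$ depends weakly continuously on $p$ and that $\mathcal{UI}$ is closed under weak convergence. By Proposition \ref{prop78}(\ref{betapp}), $G_{\bm{\beta}'_{p,q}}$ already extends analytically to $\mathcal{E}^{bp}=\comp\setminus(-\infty,0]$, and by Lemma \ref{lem3}(2) conditions (\ref{diffA}) and (\ref{diffC}) hold on $\mathcal{E}^{bp}$ for \emph{all} $p,q>0$ (which is why no hypothesis on $q$ appears); so the only remaining ingredient is a subdomain of $\mathcal{E}^{bp}$ containing $\comp^+$ on which the barrier condition (\ref{barrierA}) holds, and this will be carved out using (\ref{diffC}).

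\emph{Step 1: boundary behaviour.} In parallel with (\ref{eq12})--(\ref{eq15}) I would record: (a) from (\ref{eqbetap}) and the Stieltjes inversion formula, that $\mathrm{Im}\,G_{\bm{\beta}'_{p,q}}(x-i0)$ is a nonzero multiple of $\cos(\pi p)$ on $(-1,0)$ and of a trigonometric factor involving $p+q$ on $(-\infty,-1)$, hence $G_{\bm{\beta}'_{p,q}}(x-i0)\notin\real$ there since $p\notin\{\tfrac12,\tfrac32,\dots\}$; (b) via the connection formula (\ref{15.3.7}) applied to $F(1,p;1+p+q;1+\tfrac1z)$, the near-origin limit
\[
\lim_{z\to0,\,z\in\mathcal{E}^{bp}}G_{\bm{\beta}'_{p,q}}(z)=
\begin{cases}\infty,& 0<p<\tfrac12,\\[2pt] -\tfrac{q}{p-1},& p>\tfrac32,\end{cases}
\]
together with the sharper expansion $G_{\bm{\beta}'_{p,q}}(z)=-\tfrac{q}{p-1}+c(-z)^{p-1}+o(|z|^{p-1})$ when $\tfrac32<p<2$ (and the evident $p\ge 2$ analogue), which is the counterpart of (\ref{eq124}); (c) $\lim_{z\to\infty,\,z\in\comp^-}G_{\bm{\beta}'_{p,q}}(z)=0$, since $\widetilde G_{\bm{\beta}'_{p,q}}(z)\to0$ while the extra density term in (\ref{eqbetap}) is $\sim z^{-1-q}\to0$; and (d) $G_{\bm{\beta}'_{p,q}}(z)\to\infty$ as $z\to-1$ from within $\comp^-$, because there the density term in (\ref{eqbetap}) blows up while the $\comp^+$-side limit $\widetilde G_{\bm{\beta}'_{p,q}}(-1)$ stays finite.

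\emph{Step 2: the curve.} The goal is a set $C\subset\real\cup\comp^-$ with $G_{\bm{\beta}'_{p,q}}(C\cup\{\infty\})=\real\cup\{\infty\}$ bijectively; then the Jordan domain $\comp^+\subset D(C)\subset\mathcal{E}^{bp}$ bounded by $C$ satisfies (\ref{A}), and Proposition \ref{prop91}(1) finishes the proof. Elementary calculus shows $x\mapsto G_{\bm{\beta}'_{p,q}}(x+i0)$ is real and strictly monotone on $(-\infty,0)$, with image $(-\infty,0)$ if $0<p<\tfrac12$ and image $\bigl(-\tfrac{q}{p-1},0\bigr)$ if $p>\tfrac32$. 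When $0<p<\tfrac12$ the negative real axis already exhausts $(-\infty,0)$, so it remains to produce a single curve $c_1$ emanating from $0$ into $\comp^-$ with $G_{\bm{\beta}'_{p,q}}(c_1)=(0,\infty)$: start it inside a thin sector at $0$ where the dominant term $c(-z)^{p-1}$ controls the image (as $c_1^\delta,\gamma_1^R$ were built in Theorem \ref{thm11}), prolong it maximally using (\ref{diffC}), and run the accumulation-point analysis — interior accumulation is excluded by maximality, accumulation onto $(-\infty,0)$ by Step 1(a),(d) together with injectivity of the prolonged curve, and the far endpoint is then forced to be $\infty$ by the limits in Step 1 — to conclude $c_1$ joins $0$ to $\infty$ injectively; then $C=((-\infty,0]+i0)\cup c_1\cup\{\infty\}$ works. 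When $p>\tfrac32$ the negative axis covers only $\bigl(-\tfrac{q}{p-1},0\bigr)$, so besides covering $(0,\infty)$ by a curve $c_2$ running to $\infty$ one needs a curve $c_1$ from $0$ (where $G=-\tfrac{q}{p-1}$) on which $G$ decreases; by Step 1(d) this $c_1$ can only terminate by approaching $-1$, giving $G_{\bm{\beta}'_{p,q}}(c_1)=\bigl(-\infty,-\tfrac{q}{p-1}\bigr]$, with $c_2$ likewise approaching $-1$ from the other side in $\comp^-$. Assembling $C=((-\infty,0]+i0)\cup c_1\cup c_2\cup\{\infty\}$, with $c_1,c_2$ pinching off the $\comp^-$-neighbourhood of $-1$, yields $D(C)$ touching $-1$ only from the $\comp^+$ side, so (\ref{barrierA}) holds there with the finite real limit $\widetilde G_{\bm{\beta}'_{p,q}}(-1)$, and Proposition \ref{prop91}(1) again applies.

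\emph{Main obstacle.} As in Theorem \ref{thm11} the real work is Step 2 — showing the maximally prolonged curves reach the intended endpoints ($0$, $-1$, $\infty$) and are globally injective with the prescribed images. The extra subtlety relative to the beta case is the boundary point $-1$, a genuine singular point of $G_{\bm{\beta}'_{p,q}}$ (finite limit from $\comp^+$, infinite from $\comp^-$); arranging the two curves to close up cleanly around $-1$ and verifying condition (\ref{barrierA}) for the resulting domain is what needs the most care.
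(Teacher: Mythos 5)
You follow the paper's strategy essentially verbatim --- extend $G_{\bm{\beta}'_{p,q}}$ to $\mathcal{E}^{bp}$, invoke Lemma \ref{lem3}(2) for (\ref{diffA}) and (\ref{diffC}), record the boundary limits at $0$, $-1$, $\infty$ and on $(-\infty,0)$, and assemble a curve $C\subset\real\cup\comp^-$ mapped bijectively onto $\real\cup\{\infty\}$ so that Proposition \ref{prop91}(1) applies to $D(C)$ --- and your treatment of $p>\frac32$ matches the paper's. The case $0<p<\frac12$, however, contains a genuine gap: you claim the maximally prolonged curve $c_1$ emanating from $0$ is ``forced'' to terminate at $\infty$. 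The admissible termini are $0$, $-1$ and $\infty$, and only $0$ is actually excluded. Your Step 1(d) does not rule out $-1$: since $G_{\bm{\beta}'_{p,q}}\to\infty$ there from within $\comp^-$, terminating at $-1$ is entirely consistent with $G(c_1)$ being an unbounded real interval --- it would give $G(c_1)=(-\infty,\infty)$, in which case $C=((-\infty,0]+i0)\cup c_1$ is no longer injectively mapped, because the negative half-line already covers $(-\infty,0)$. Note the internal tension: in your $p>\frac32$ argument you use the very same limit at $-1$ to conclude that $c_1$ \emph{does} end there. The paper leaves both termini open and repairs the bad case by also constructing a second curve from $\infty$ onto $(0,\delta)$, then either using whichever curve joins $0$ to $\infty$, or, if both end at $-1$, splicing them at their first crossing exactly as in the last part of the proof of Theorem \ref{thm11}. (Numerics suggest the bad case never occurs, but that is not a proof.)

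Two smaller repairs. First, your genericity reduction excludes only half-integer $p$, but the boundary computation on $(-\infty,-1)$ produces the factor $\cos(\pi q)$ (from $z^{p-1}(1+z)^{-p-q}$ with both arguments tending to $-\pi$), not a factor controlled by $p$ alone; you must also take $q\notin\{\frac12,\frac32,\dots\}$ (and $p\notin\mathbb{Z}$ if you want to use (\ref{15.3.7})) and recover the exceptional parameters by weak continuity, as the paper does. Second, the curve covering $(0,\infty)$ in the case $p>\frac32$ (and its analogue for $0<p<\frac12$) has to be seeded somewhere; the natural seed is at $\infty$, and for that you need the uniform sectorial asymptotic $G_{\bm{\beta}'_{p,q}}(z)=\frac1z(1+o(1))$ --- the paper's (\ref{eq23}), obtained by rotating the contour --- and not merely the statement that the limit at $\infty$ equals $0$.
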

\begin{proof}
Assume moreover that $p\notin\mathbb{Z}$, $p,q \notin\{\frac{1}{2}, \frac{3}{2}, \frac{5}{2}, \cdots\}$ and
take $\mathcal{E}^{bp}=\comp\setminus (-\infty,0]$. As in the proof for beta distributions, we will construct a good domain $\mathcal{D}$ to apply Proposition \ref{prop91}(1). 

\emph{Step 1}. We are going to show that 
\begin{align}
& G_{\bm{\beta}'_{p,q}}(x - i0)\in\comp\setminus \real,~~ x \in \real\setminus\{-1,0\}, \label{eq21}\\
& \lim_{z\to-1,z\in\comp^-}G_{\bm{\beta}'_{p,q}}(z)=\infty, \label{eq210} \\
& \lim_{z\to 0, z \in\mathcal{E}^{bp}}G_{\bm{\beta}'_{p,q}}(z)
=\begin{cases}\label{eq22} 
\infty, &0<p < \frac{1}{2},\\
-\frac{q}{p-1},& p > \frac{3}{2}, 
\end{cases}\\
&G_{\bm{\beta}'_{p,q}}(z) = \frac{1}{z}(1+o(1)), \label{eq23}
\end{align}
where $o(1)$ means that, for any $\varepsilon>0$, there exists $R>0$ such that $|o(1)| \leq \varepsilon$ for $z\in (\comp\setminus (-\infty,0]) \cap\{z: |z|>R\}$. 

(\ref{eq21})\,\,\, The first identity in Proposition \ref{prop113}(2)  leads to 
\begin{equation}\label{eq987}
\text{Im}\,G_{\bm{\beta}'_{p,q}}(x-i0)
=\frac{1}{(1+x)^2}\text{Im}\, G_{\bm{\beta}_{p,q}}\left(\frac{x}{x+1}\right), 
\end{equation}
showing the claim because of the previous computation (\ref{eq12}). 

(\ref{eq210}) follows from Proposition \ref{prop78}(\ref{betapp}).  

(\ref{eq22}) follows from (\ref{eq13}) and Proposition \ref{prop113}(2) with elementary calculus. 
  
(\ref{eq23})\,\,\, The contour $(0, \infty)$ of the integral $\widetilde{G}_{\bm{\beta}'_{p,q}}(z)$ can be displaced to $e^{i\theta_0}(0,\infty)$ for any $\theta_0 \in (0,\pi)$: 
$$
\widetilde{G}_{\bm{\beta}'_{p,q}}(z)  = \int_{e^{i\theta_0}(0,\infty)} \frac{1}{z-x}\,\bm{\beta}'_{p,q}(dx),~~z\in\comp^-. 
$$  
Hence $\widetilde{G}_{\bm{\beta}'_{p,q}}(z) =\frac{1}{z}(1+o(1))$ uniformly as $z \to \infty, ~\arg z \in (-\pi,\frac{1}{2}\theta_0)$, and so $G_{\bm{\beta}'_{p,q}}(z) =\frac{1}{z}(1+o(1))$ as $z\to \infty,~\arg z \in (-\pi,\frac{1}{2}\theta_0)$  
from Proposition \ref{prop78}(\ref{betapp}). The estimate in $\comp^+$ is similar. 

\emph{Step 2}. 
From (\ref{eq124}) and Proposition \ref{prop113}(2), we can write 
\begin{equation}\label{eq25}
G_{\bm{\beta}'_{p,q}}(z) 
= 
\begin{cases}
- \frac{\pi}{B(p,q) \sin\pi p} (-z)^{p-1}+o(|z|^{p-1}), & 0<p<\frac{1}{2},\\[5pt]
-\frac{q}{p-1} - \frac{\pi}{B(p,q) \sin\pi p} (-z)^{p-1} +o(|z|^{p-1}),&\frac{3}{2}<p<2,  \\[5pt]
-\frac{q}{p-1} - \frac{q(q+1)}{(p-1)(p-2)}z +o(|z|), &p>2.
\end{cases}
\end{equation}

\textbf{Case $p>\frac{3}{2}$.} 
For small $\varepsilon>0$, we can find a curve $c_1^{\varepsilon}$ starting from $0$ as in the proof of Theorem \ref{thm11}, such that $G_{\bm{\beta}'_{p,q}}$ maps $c_1^\varepsilon$ into  $(-\frac{q}{p-1}-\varepsilon, -\frac{q}{p-1})$ injectively. The curve $c_1^\varepsilon$ extends to a maximal curve $c_1$ such that $G_{\bm{\beta}'_{p,q}}(c_1) \subset (-\infty, -\frac{q}{p-1})$. An endpoint of $c_1$ is $0$. The other endpoint cannot be in $(-\infty,0)$ because of (\ref{eq21}),  nor be in $\mathcal{E}^{bp}$ because $c_1$ is maximal and $G_{\bm{\beta}'_{p,q}}$ is analytic in $\mathcal{E}^{bp}$, 
and so the other endpoint is $0$, $-1$ or $\infty$. Since $G_{\bm{\beta}'_{p,q}}$ is analytic in $\mathcal{E}^{bp}$ and injective on $c_1$, we conclude that the other endpoint must be $-1$. Hence $G_{\bm{\beta}'_{p,q}}(c_1) = (-\infty, -\frac{q}{p-1})$ from (\ref{eq210}). 

The point of the next step is to find a curve starting from $\infty$. 
From (\ref{eq23}), there exists $\delta>0$ such that $|G_{\bm{\beta}'_{p,q}}(z)-\frac{1}{z}| \leq \frac{1}{4|z|}$ for $z\in\mathcal{S}_{\left(-\frac{\pi}{4}, \frac{\pi}{4}\right)} \cap \{z:|z|>\frac{1}{2\delta}\}$. Hence $G_{\bm{\beta}'_{p,q}}$ maps the boundary of $\mathcal{S}_{\left(-\frac{\pi}{4}, \frac{\pi}{4}\right)} \cap \{z:|z|>\frac{1}{2\delta}\}$ to a curve surrounding each point of $(0,\delta)$ just once, and so there is a curve $c_2^{\delta}\subset \mathcal{S}_{\left(-\frac{\pi}{4}, \frac{\pi}{4}\right)} \cap \{z:|z|>\frac{1}{2\delta}\}$ such that $G_{\bm{\beta}'_{p,q}}(c_2^\delta) = (0,\delta)$. Following the argument of Theorem \ref{thm11}, we can extend $c_2^\delta$ to a curve $c_2$ so that $G_{\bm{\beta}'_{p,q}}$ maps $c_2$ into  $(0,\infty)$ injectively, thanks to property (\ref{C}). The endpoint of $c_2$ is in $(-\infty,0] \cup \{\infty\}$, but it cannot be in $(-\infty,-1) \cup (-1,0)$ because of (\ref{eq21}), nor be $0$ or $\infty$ since $G_{\bm{\beta}'_{p,q}}$ is analytic and injective in $c_2$. Therefore, $c_2$ connects $\infty$ and $-1$, and hence $G_{\bm{\beta}'_{p,q}}(c_2)=(0,\infty)$. 

From (\ref{eq25}), it follows that $G_{\bm{\beta}'_{p,q}}((-\infty,0)+i0)= (-\frac{q}{p-1},0)$. 
Now we know that $G_{\bm{\beta}'_{p,q}}$ maps $C:=((-\infty,0]+i0)\cup c_1\cup c_2\cup \{-1,\infty\}$ bijectively onto $\real\cup \{\infty\}$ and it is analytic in the Jordan domain surrounded by $C$. 
We can now use Proposition \ref{prop91}. 
\begin{figure}[htpb]
\begin{minipage}{0.5\hsize}
\begin{center}
\includegraphics[width=70mm,clip]{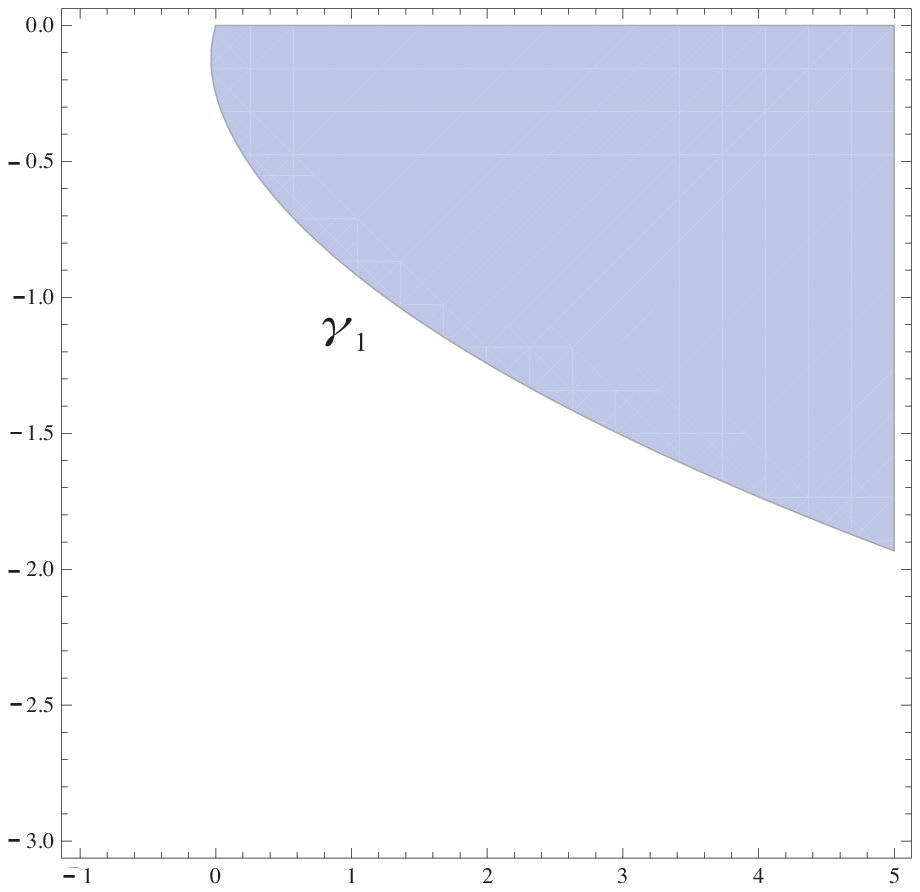}
\caption{A curve for $\bm{\beta}'_{0.4,0.5}$}\label{bp1}
\end{center}
  \end{minipage}
\begin{minipage}{0.5\hsize}
\begin{center}
\includegraphics[width=66.8mm,clip]{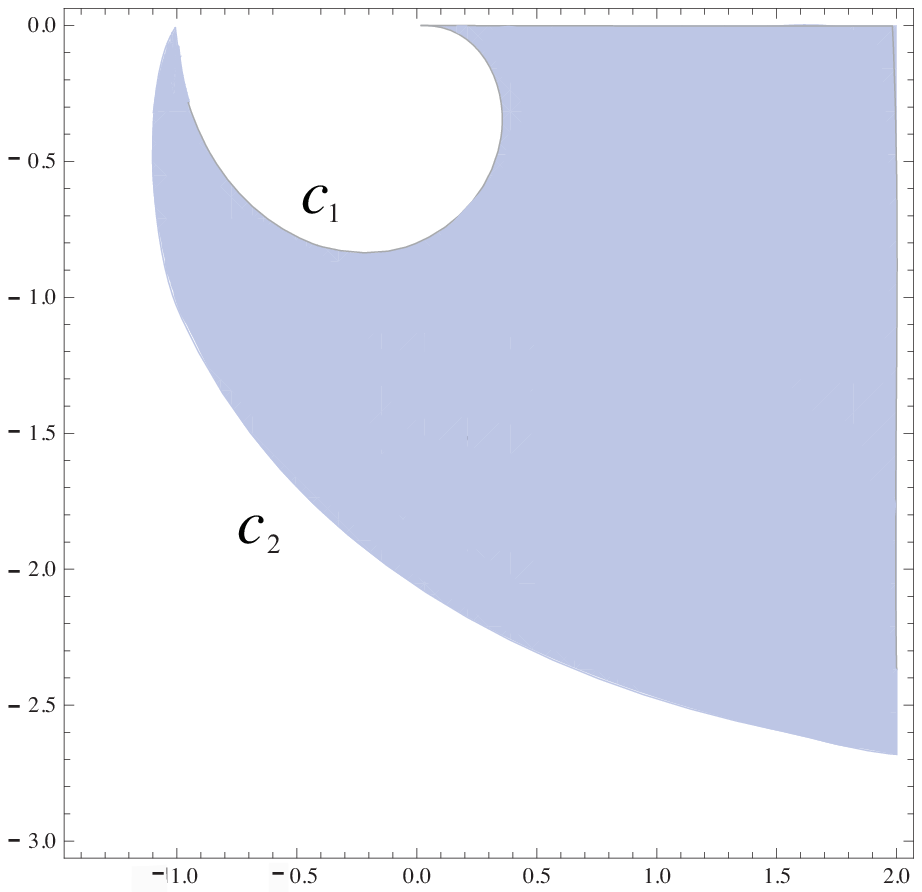}
\caption{Curves for $\bm{\beta}'_{5,2}$}\label{bp2}
\end{center}
\end{minipage}
\end{figure}

\textbf{Case $0<p<\frac{1}{2}$.}  For large $R>0$ and small $\delta>0$, we can construct curves $\gamma_1^R$ and $\gamma_2^\delta$ respectively starting from $0$ and $\infty$, such that $G_{\bm{\beta}'_{p,q}}$ bijectively maps $\gamma_1^R$ onto $(R, \infty)$ and $\gamma_2^\delta$ onto $(0,\delta)$; the construction of $\gamma_1^R$ is similar to the case $0<p<\frac{1}{2},~p+q >2$ in Theorem \ref{thm11}, and the  construction of $\gamma_2^\delta$ is the same as that of $c_2^\delta$ in the case $p>\frac{3}{2}$.  The extension $\gamma_1$ of $\gamma_1^R$ starts at $0$ and the other endpoint is $-1$, $0$ or $\infty$, but $0$ is impossible from the same reasoning as in Theorem \ref{thm11}, 
 and so $G_{\bm{\beta}'_{p,q}}(\gamma_1)=(-\infty,\infty)$ or $G_{\bm{\beta}'_{p,q}}(\gamma_1)=(0,\infty)$. The extension $\gamma_2$ of $\gamma_2^\delta$ starts from $\infty$ and the other endpoint is $-1$ or $0$. If $\gamma_1$ connects $0, \infty$ or if $\gamma_2$ connects $\infty, 0$, one can take $C:=((-\infty,0]+i0)\cup \gamma_i \cup\{0,\infty\}$
 by choosing the corresponding $\gamma_i$, to apply Proposition \ref{prop91}(1) for the domain $D(C{})$. Otherwise, one can still apply Proposition \ref{prop91}(1) similarly to the last part of the proof of Theorem \ref{thm11}.\footnote{Numerical computation indicates that $\gamma_1$ always connects $0$ and $\infty$, and $\gamma_1=\gamma_2$ as shown in Fig.\ \ref{bp1}.} 
\end{proof}

\section{Criteria for non free infinite divisibility} \label{subsec12}
We have seen that beta and beta prime distributions are FID if the parameters $(p,q)$ belong to specific regions. 
By contrast, many distributions outside those regions are not FID as shown in this section. 

\subsection{Method based on a local property of probability density function}
Given a FID measure $\mu$, the following properties are known thanks to Belinschi and Bercovici: the absolutely continuous part of $\mu$ is real analytic wherever it is strictly positive \cite[Theorem 3.4]{BB04}; 
$\mu$ has no singular continuous part \cite[Theorem 3.4]{BB04}; $\mu$ has at most one atom \cite[Theorem 3.1]{BB04}. 

Now, we will deeply study the real analyticity of the density function of a FID measure.   
Basic concepts and notations are defined below. Let $\mathcal{S}^{(n)}$ $(n\in \mathbb{Z})$ denote the open set $\comp\setminus [0,\infty)$ whose element $z$ is endowed with the argument $\arg z \in (2n\pi, 2n\pi +2\pi)$. By identifying the slit $\lim_{y\searrow 0}([0,\infty) +iy)$ of $\mathcal{S}^{(n)}$ and the slit $\lim_{y\searrow 0}([0,\infty) -iy)$ of $\mathcal{S}^{(n-1)}$ for each $n$, we define 
a helix-like Riemannian surface $\mathcal{S}$. We express an element $z\in \mathcal{S}$ uniquely by $z=|z|e^{i\theta}=re^{i\theta}$, $|z|=r>0, \theta \in \real$.  The functions 
$z^\alpha=r^\alpha e^{i\alpha \theta}$ $(\alpha \in \comp)$ and $\log z=\log r + i\theta$ can be regarded as analytic maps in $\mathcal{S}$. 
Let $\mathcal{S}_{J}(R{})$ denote the subset $\{z\in \mathcal{S}: \arg z \in J, ~0<|z|<R\}$ for $J \subset \real$, and 
also  $\mathcal{S}_{J}:= \cup_{R>0}\mathcal{S}_J(R{}).$ We understand that $\comp^+ = \mathcal{S}_{(0,\pi)}$ and $(0,\infty)$ is the half line corresponding to $\arg z =0$.

We are ready to state the main theorem of this section, which contributes to Theorem \ref{thm1}.  
\begin{thm}\label{thm90} 
Let $\mu$ be a probability measure on $\real$ whose restriction to an interval $(x_0-\delta, x_0+\delta)$ has a local density function $p(x)$  
 of the form 
\begin{equation}\label{eq910}
p(x)=
\begin{cases}
c(x-x_0)^{\alpha-1}(1+f(x)), & x_0<x< x_0 + \delta, \\
0, &  x_0 - \delta < x <x_0, 
\end{cases}
\end{equation}
where $c,\delta>0$ and $x_0\in\real$. Let $\theta(\alpha):= \left(\frac{1}{|\alpha-1|}-1\right)\pi$, and assume the following:   
\begin{enumerate}[\rm(i)]
\item \label{z1} $\alpha \in \mathcal{I}=\left(\bigcup_{n=1}^\infty(\frac{2n-1}{2n}, \frac{2n}{2n+1}) \right) \cup \left( \bigcup_{n=1}^\infty(\frac{2n+2}{2n+1}, \frac{2n+1}{2n})\right) \subset (\frac{1}{2},\frac{3}{2})$;  
\item\label{z2} $f(\cdot+x_0)$ is real analytic in $(0,\delta)$, and it extends to an analytic map in $\mathcal{S}_{(-\theta(\alpha)-\theta_0,\theta_0)}(\delta)$ for some $\theta_0\in(0,\frac{\pi}{4})$; 
 %\item\label{z3} $f$ takes real values in $(x_0,x_0+\delta)$; 
 \item\label{z4} there is a continuous function $g :[0,\delta) \to [0,\infty)$ such that $g(0)=0$ and $|f(x_0+re^{i\theta})|\leq g(r{})$ for $r\in(0,\delta)$, $\theta \in (-\theta(\alpha)-\theta_0,\theta_0)$.  
 \end{enumerate}
 Then $\mu$ is not FID.  
 \end{thm}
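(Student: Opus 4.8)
The plan is to argue by contradiction: assume $\mu$ is FID and derive an incompatibility between the shape of the density at $x_0$ and the Pick property of $-\phi_\mu$ given by Theorem \ref{thm0}. After translating so that $x_0=0$, the first task is a precise asymptotic expansion of $G_\mu$ together with an analytic continuation onto a large sector of the helicoidal surface $\mathcal S$. Writing $G_\mu(z)=\int_0^\delta\frac{c\,x^{\alpha-1}(1+f(x))}{z-x}\,dx+R(z)$ with $R$ holomorphic on a genuine disc around $0$ (since $\mu$ puts no mass on $(-\delta,0)$), I would deform the contour $(0,\delta)$ inside the sector where $f$ is analytic (hypothesis (ii)) and repeatedly cross the slit using the jump $2\pi i\,c\,z^{\alpha-1}(1+f(z))$, continuing $G_\mu$ \emph{downward} from $\comp^+$. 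Combining the standard evaluation $\int_0^\infty\frac{x^{\alpha-1}}{z-x}\,dx=-\frac{\pi}{\sin\pi\alpha}(-z)^{\alpha-1}$ with the bound in (iii), this yields, on the sector $\Sigma=\{z:\arg(-z)\in(-\theta(\alpha)-\theta_0-\pi,\,0)\}$ near $0$, a \emph{uniform} expansion $G_\mu(z)=a+b(-z)^{\alpha-1}(1+o(1))$ with $a\in\real$ and $b=\frac{c\pi}{\sin(\pi(\alpha-1))}\in\real\setminus\{0\}$; note that the opening of $\Sigma$ equals $\gamma\pi+\theta_0$, where $\gamma:=\frac1{|\alpha-1|}>2$, hence exceeds $\gamma\pi$.

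The second step passes to $F_\mu=1/G_\mu$ and localizes the Voiculescu inverse. Except in the degenerate case $G_\mu(0-)=0$ (where $F_\mu$ blows up at $0$; this is treated separately by running the same analysis near $\infty$, or removed by a small affine perturbation preserving FID and the local structure), one obtains $F_\mu(z)=\beta+b''(-z)^{|\alpha-1|}(1+o(1))$ on $\Sigma$ with $\beta\in\real$ and $b''<0$ (so $\arg b''=\pi$), with $\beta=0$ when $\alpha<1$. Let $U$ be the connected component of $\{z\in\Sigma:\operatorname{Im}F_\mu(z)>0\}$ containing $\comp^+\cap B(0,\varepsilon)$; an argument computation gives $U=\{z:\arg(-z)\in(-\gamma\pi,0)\}\cap B(0,\varepsilon)$, which lies in $\Sigma$ precisely because of the extra room $\theta_0$. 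A Rouch\'e/argument-principle estimate — here the uniform bound $|f|\le g(r)\to0$ from (iii) is indispensable — shows that $F_\mu$ maps $U$ biholomorphically onto $\comp^+\cap B(\beta,\varepsilon')$. Since $\mu$ is FID, $F_\mu^{-1}=\mathrm{id}+\phi_\mu$ extends holomorphically to all of $\comp^+$, and $F_\mu^{-1}\circ F_\mu=\mathrm{id}$ on $\comp^+$ by the identity theorem (starting from $\Gamma_{\lambda,M}$, using Proposition \ref{prop08}(\ref{p4})); propagating this identity along the connected set $U$ gives $F_\mu^{-1}|_{\comp^+\cap B(\beta,\varepsilon')}=(F_\mu|_U)^{-1}$. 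In particular $F_\mu^{-1}$ extends continuously up to the real segment near $\beta$, with $F_\mu^{-1}((\beta,\beta+\varepsilon'))$ lying on the ray $\{z:\arg(-z)=-\gamma\pi\}$; explicitly $\operatorname{Im}F_\mu^{-1}(u+i0)=\bigl(\tfrac{u-\beta}{|b''|}\bigr)^{\gamma}\sin(\pi\gamma)\,(1+o(1))$ for $u\in(\beta,\beta+\varepsilon')$.

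The final step is the contradiction. Because $-\phi_\mu=\mathrm{id}-F_\mu^{-1}$ is a Pick function, $\operatorname{Im}(-\phi_\mu)\ge0$ on $\comp^+$, and by the continuous extension established in Step 2 the same inequality holds at the boundary points just above $\beta$; hence $\operatorname{Im}F_\mu^{-1}(u+i0)\le0$ for $u\in(\beta,\beta+\varepsilon')$. Comparing with the formula above forces $\sin(\pi\gamma)\le0$, i.e.\ $\lfloor\gamma\rfloor$ odd. But the definition of $\mathcal I$ says exactly that $\gamma=\frac1{|\alpha-1|}$ lies in some interval $(2n,2n+1)$ with $n\ge1$; then $\lfloor\gamma\rfloor$ is even and, $\gamma$ being non-integer, $\sin(\pi\gamma)>0$ — a contradiction. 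Therefore $\mu$ cannot be FID.

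I expect the main obstacle to be the bookkeeping in Steps 1--2: keeping track of the branch of $(-z)^{\alpha-1}$ across the correct sheets of $\mathcal S$, verifying that the continuation sector $\Sigma$ genuinely contains $U$ (this is why the angular slack $\theta_0$ appears in (ii) and, presumably, why $\theta_0<\frac\pi4$ is imposed), and upgrading the pointwise asymptotics to a uniform statement strong enough to make the argument-principle step yield that $F_\mu|_U$ is a biholomorphism onto a half-disc. The degenerate case $G_\mu(0-)=0$ and the exceptional values where $\alpha$ or $\gamma$ is a (half-)integer need separate, but easier, handling or a limiting argument.
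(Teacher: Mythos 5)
Your proposal follows essentially the same route as the paper's proof: decompose $\mu$ into the pure power part plus perturbations, continue $G_\mu$ across the slit onto a sector of the helix $\mathcal{S}$ of opening slightly larger than $\pi/|\alpha-1|$ with the uniform asymptotic $a+b(-z)^{\alpha-1}$, invert $F_\mu$ there by the argument principle, match that local inverse with the Pick extension of $F_\mu^{-1}$, and derive the contradiction from $\sin\bigl(\pi/|\alpha-1|\bigr)>0$ --- which is precisely the paper's observation that the preimage ray at angle $-\theta(\alpha)$ projects into $\comp^+$. One caveat: a small affine transformation cannot remove the degenerate case where the constant term $\beta=\lim_{z\to x_0}G_\mu(z)$ vanishes (affine maps preserve the vanishing of this limit), so for $\alpha>1$ you would actually have to carry out your alternative of running the inversion near $\infty$; the paper is equally silent here, dismissing the case $\alpha>1$ as ``similar.''
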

\begin{rem}
 A typical function $f$ satisfying the assumptions (\ref{z2}), (\ref{z4}) is  
$$
(x-x_0)^\beta(-\log(x-x_0))^{\gamma} ~~(\beta>0, \gamma\in \real, x_0<x<x_0+1)
$$
 and absolutely convergent series of such functions. More restrictively,  any real analytic function in a neighborhood of $x_0$, vanishing at $x_0$, satisfies those assumptions. 
\end{rem}
\begin{cor}
The beta distribution $\bm{\beta}_{p,q}$ is not  FID if $p\in  \mathcal{I}$ or $q \in \mathcal{I}$. The beta prime distribution $\bm{\beta}'_{p,q}$ and the gamma distribution $\bm{\gamma}_p$ are not  FID if $p\in \mathcal{I}$. 
\end{cor}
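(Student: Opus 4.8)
The plan is to derive everything from Theorem~\ref{thm90} by localizing each density at the left endpoint of its support, namely $x_0=0$, and---in the case $q\in\mathcal{I}$ for $\bm{\beta}_{p,q}$---at $x_0=1$ after a reflection. So the bulk of the work is checking, for each family, that near $x_0=0$ the density has the form \eqref{eq910} with exponent $\alpha$ equal to the relevant parameter and with a remainder $f$ satisfying conditions (ii) and (iii).

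First I would treat all three families at once in the case where the first (or only) parameter lies in $\mathcal{I}$. Near $x_0=0$ the densities of $\bm{\beta}_{p,q}$, $\bm{\beta}'_{p,q}$, $\bm{\gamma}_p$ are, respectively, $\frac{1}{B(p,q)}x^{p-1}(1-x)^{q-1}$, $\frac{1}{B(p,q)}x^{p-1}(1+x)^{-p-q}$, and $\frac{1}{\Gamma(p)}x^{p-1}e^{-x}$; all are of the form \eqref{eq910} with $\alpha=p$, $x_0=0$, $c$ the relevant normalizing constant, and remainder $f(x)$ equal to $(1-x)^{q-1}-1$, $(1+x)^{-p-q}-1$, or $e^{-x}-1$, while the interval $(-\delta,0)$ carries no mass (take $\delta<1$ in the beta case). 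Condition (i) of Theorem~\ref{thm90} is precisely the hypothesis $p\in\mathcal{I}$. For (ii) and (iii) I would observe that in every case $f$ is the restriction to $(0,\delta)$ of a genuinely single-valued analytic function on a disk $\{|z|<\rho\}$ with $\rho>\delta$, since $(1-z)^{q-1}$ and $(1+z)^{-p-q}$ are analytic for $|z|<1$ and $e^{-z}$ is entire; such a function lifts to an analytic function on $\mathcal{S}_J(\delta)$ for every $J\subset\real$ by $2\pi$-periodicity in $\arg z$, which gives (ii) for the sector $J=(-\theta(\alpha)-\theta_0,\theta_0)$ and any $\theta_0\in(0,\frac{\pi}{4})$. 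Setting $g(r):=\max_{|w|\le r}|f(w)|$, which is continuous, nondecreasing, and tends to $|f(0)|=0$ as $r\searrow0$, yields (iii). Theorem~\ref{thm90} then shows $\bm{\beta}_{p,q}$ ($p\in\mathcal{I}$), $\bm{\beta}'_{p,q}$ ($p\in\mathcal{I}$), and $\bm{\gamma}_p$ ($p\in\mathcal{I}$) are not FID.

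It remains to handle $\bm{\beta}_{p,q}$ with $q\in\mathcal{I}$. The pushforward of $\bm{\beta}_{p,q}$ by the affine map $x\mapsto 1-x$ is $\bm{\beta}_{q,p}$, and free infinite divisibility is invariant under affine transformations (dilations are automorphisms of $\boxplus$ and translations are $\boxplus$-convolutions by point masses, as already used for Corollary~\ref{cor1}(3)); hence $\bm{\beta}_{p,q}$ is FID if and only if $\bm{\beta}_{q,p}$ is, and the previous paragraph applied to $\bm{\beta}_{q,p}$---exponent $\alpha=q\in\mathcal{I}$ at $x_0=0$---shows the latter is not FID.

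I do not anticipate a real obstacle: the corollary is purely a matter of verifying the hypotheses of Theorem~\ref{thm90}. The only point needing a little care is condition (ii)---one must extend each remainder $f$ analytically over a sector of the helix-like surface $\mathcal{S}$ of opening larger than $\pi$, since $\theta(\alpha)>\pi$ for $\alpha\in\mathcal{I}\subset(\frac12,\frac32)$---but this is immediate because each $f$ is in fact an honest single-valued analytic function on a punctured disk, so its lift to $\mathcal{S}$ exists globally and no monodromy is involved.
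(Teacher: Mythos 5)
Your proposal is correct and is essentially the argument the paper intends: the corollary is a direct application of Theorem~\ref{thm90} at the left endpoint of the support with $\alpha$ equal to the relevant parameter, the remainder $f$ being real analytic near $x_0$ and vanishing there (exactly the situation singled out in the Remark following Theorem~\ref{thm90}), together with the standard reflection $x\mapsto 1-x$ to reduce the case $q\in\mathcal{I}$ of $\bm{\beta}_{p,q}$ to the case $p\in\mathcal{I}$ of $\bm{\beta}_{q,p}$. Your verification of conditions (ii) and (iii) via the single-valued lift to $\mathcal{S}$ and $g(r)=\max_{|w|\le r}|f(w)|$ is exactly what is needed.
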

\begin{proof}[Proof of Theorem \ref{thm90}.]
For simplicity we assume that $x_0=0$. Divide $\mu$ into three finite measures as $\mu=\rho+\rho'+\rho''$, where 
$$
\rho:= c x^{\alpha-1} 1_{(0,\delta)}(x)\,dx,~~ \rho':= c x^{\alpha-1}f(x) 1_{(0,\delta)}(x)\,dx, ~~\rho'' :=\mu|_{\real \setminus (-\delta,\delta)}.
$$ 

\emph{Step 1: Analytic continuation of $G_\mu$.} %Assume that $\alpha \in(0,1)\cup (1,2)$ for the moment.  
Since the function $cx^{\alpha-1}$ is proportional to the density function of the beta distribution $\bm{\beta}_{\alpha,1}$, the Cauchy transform $\widetilde{G}_\rho$  is equal to
%\footnote{$\rho$ may be a signed measure, but the Cauchy transform can be defined for a finite Borel signed measure on $\real$.}$
$
\frac{c\delta^\alpha}{\alpha}\frac{1}{z}F(1,\alpha,\alpha+1, \frac{\delta}{z}), 
$
and so 
\begin{equation}\label{eq080}
G_\rho(z) = 
 \frac{c\delta^{\alpha-1}}{1-\alpha} F\left(1,1-\alpha,2-\alpha,\frac{z}{\delta}\right) -  \frac{c\pi }{\sin \pi\alpha}(-z)^{\alpha-1},
\end{equation}
where the formulas (\ref{15.3.7}) and $\Gamma(\alpha)\Gamma(1-\alpha)=\frac{\pi}{\sin \pi\alpha}$ were utilized. 
This expression continues $G_\rho$ analytically from $\comp^+\cap\{|z| < \delta\}$ to $\mathcal{S}_{(-\infty,\pi)}(\delta)$. 

Let $P:\mathcal{S}\to \comp\setminus\{0\}$ be the analytic map defined by $P(re^{i\theta}):=re^{i(\theta-2n\pi)}$ for $re^{i\theta}\in \mathcal{S}^{(n)}$.
We show that the Cauchy transform $G_{\rho'}$ also analytically extends from $\comp^+\cap\{|z| < \delta\}$ to $\mathcal{S}_{(-\infty,\pi)}(\delta)$, and we have the formula  
\begin{equation}\label{eq090} 
G_{\rho'}(z) =
\widetilde{G}_{\rho'}(P(z)) -2\pi c iz^{\alpha-1} \sum_{k=0}^{|n|-1} e^{2\pi \alpha k} f(re^{i(\theta+2\pi k)})
\end{equation}
for $z=re^{i\theta}\in \mathcal{S}^{(n)},~n \leq -1$. 
This is proved by the iterative application of the residue theorem as in Proposition \ref{prop78}. 
In the case $n=-1$, for $z\in \mathcal{S}_{(-\pi,0)}(\delta)$, we have the formula 
\begin{equation}\label{eq0901}
G_{\rho'}(z) =
\widetilde{G}_{\rho'}(P(z)) -2\pi c i z^{\alpha-1} f(re^{i\theta})
\end{equation}
along the same line as (\ref{eqbeta}).  Note now that $P(z)=z$ since $\mathcal{S}_{(-\pi,0)}(\delta) \subset \comp$. Clearly the expression (\ref{eq0901}) analytically extends from $\mathcal{S}_{(-\pi,0)}(\delta)$ to $\mathcal{S}_{(-2\pi,0)}(\delta)$ since $\widetilde{G}_{\rho'}(z)$ is analytic in $\mathcal{S}_{(-2\pi,0)}(\delta)$. Next we extend it from $\mathcal{S}_{(-2\pi,0)}(\delta)$ to $\mathcal{S}_{(-3\pi,0)}(\delta)$. This is done by taking a simple arc as in the proof of Proposition \ref{prop78}, and then we may extend $\widetilde{G}_{\rho'}(P(z))$ from $\mathcal{S}_{(-2\pi,0)}(\delta)$ to $\mathcal{S}_{(-3\pi,0)}(\delta)$. For $z\in\mathcal{S}_{(-3\pi,-2\pi)}(\delta)$, this analytic continuation of $\widetilde{G}_{\rho'}(P(z))$ is expressed by 
\begin{equation}\label{eq0902}
\widetilde{G}_{\rho'}(P(z)) - 2\pi c i  (P(z))^{\alpha-1} f(P(z))
\end{equation}
along the same line as (\ref{eqbeta}). 
Since now $P(z)=re^{i(\theta +2\pi)}$, combining (\ref{eq0901}) and (\ref{eq0902}) we get (\ref{eq090}) for $n=-2$. Iterating this argument, we get the general case (\ref{eq090}). 

\emph{Step 2: Asymptotic behavior of $G_\mu$ at $0$.} 
 We show that, as $|z| \to 0$,   
\begin{equation}\label{master}
G_\mu(z)= 
\begin{cases}
-\gamma(-z)^{\alpha-1}+o(|z|^{\alpha-1}), &z \in \mathcal{S}_{(-\theta(\alpha)-\theta_0/2, \pi)}(\delta), ~~ 0< \alpha<1, \\
\beta + \gamma(-z)^{\alpha-1}+o(|z|^{\alpha-1}), &z \in \mathcal{S}_{(-\theta(\alpha)-\theta_0/2, \pi)}(\delta),~~ 1< \alpha<2, 
\end{cases}
\end{equation}
where: $\beta \in \real, \gamma>0$ are real constants depending on $\alpha$; 
the function $(-z)^{\alpha-1}$ is extended to $\mathcal{S}$ so that it takes real values for $\arg z =\pi$; 
Landau's symbol $o(|z|^{\alpha-1})$ here means the uniform estimate in $ \mathcal{S}_{(-\theta(\alpha)-\theta_0/2, \pi)}$, that is,  
for any $\varepsilon>0$, there exists $\delta'\in(0,\delta)$ such that $|o(|z|^{\alpha-1})| \leq \varepsilon |z|^{\alpha-1}$ for $z\in \mathcal{S}_{(-\theta(\alpha)-\theta_0/2, \pi)}(\delta')$. 

This problem reduces to the study of $\rho'$ and $\rho''$ because $G_\rho$ is of the form (\ref{eq080}). 
We will show that the contribution of $\rho'+\rho''$ is the following: 
\begin{enumerate}[\rm(a)]
\item $G_{\rho'+\rho''}(z)= o(|z|^{\alpha-1})$ if $\alpha \in(0,1)$; 
\item $G_{\rho'+\rho''}(z)= \widetilde{\beta}+o(|z|^{\alpha-1})$ for some $\widetilde{\beta}\in\real$ if $\alpha \in(1,2)$. 
\end{enumerate}
Because $G_{\rho''}$ is analytic around $0$,  the above estimates are easy for $G_{\rho''}$. Hence we only have to estimate $G_{\rho'}.$ 
The summation part in (\ref{eq090}) is clearly of the form $o(|z|^{\alpha-1})$ because of the assumption (\ref{z4}), and so 
 it suffices to estimate $\widetilde{G}_{\rho'}$ in $\mathcal{S}_{(-2\pi,\xi)}$ for some $\xi>0$.

\textbf{Case $\alpha \in(0,1)$.} We divide the estimate into two cases $z \in \mathcal{S}_{[-\pi,\theta_0/2)}$ and $z \in \mathcal{S}_{(-2\pi,-\pi]}$. Take $\theta_0' \in (\frac{1}{2}\theta_0, \theta_0)$ arbitrarily. 
In Eq.\ (\ref{eq090}), displacing the contour $[0,\delta]$ of $\widetilde{G}_{\rho'}$ to $e^{i\theta_0'}[0,\delta] \cup \{\delta e^{i\theta}: \theta \in [0,\theta_0']\}$, one can write the Cauchy transform $\widetilde{G}_{\rho'}$ as\footnote{The minus sign of the second term is due to the direction of the curve.} 
\begin{equation}\label{eq070}
\begin{split}
\widetilde{G}_{\rho'}(z) 
&= c\int_{(0,\delta)e^{i\theta_0'}} \frac{1}{z-w} w^{\alpha-1}f(w)\,dw - c\int_{\{\delta e^{i\theta}: \theta \in [0,\theta_0']\}} \frac{1}{z-w} w^{\alpha-1}f(w)\,dw
\end{split}
\end{equation} 
 for $z \in \mathcal{S}_{[-\pi,\theta_0/2)}(\frac{\delta}{2})$. The second term is bounded for $z$ in a neighborhood of $0$, so it is $o(|z|^{\alpha-1})$. 
Writing $z=|z|e^{i\theta}$, $\theta \in [-\pi,\frac{1}{2}\theta_0)$, the first integral can be estimated as follows: 
\begin{equation}\label{eq072}
\begin{split}
\left|\int_{(0,\delta)e^{i\theta_0'}} \frac{1}{z-w} w^{\alpha-1}f(w)\,dw\right|
&=\left|\int_0^{\frac{\delta}{|z|}}\frac{|z|^{\alpha-1}e^{i(\alpha-1)\theta_0'}}{e^{i\theta}-e^{i\theta_0'}y} y^{\alpha-1}f(|z|ye^{i\theta_0'})ie^{i\theta_0'}\,dy\right| \\
&\leq |z|^{\alpha-1}\left|\int_0^{\frac{\delta'}{|z|}}\frac{1}{e^{i(\theta-\theta_0')}-y} y^{\alpha-1}f(|z|ye^{i\theta_0'})\,dy\right| \\
&~~~~+ |z|^{\alpha-1}\left|\int_{\frac{\delta'}{|z|}}^{\frac{\delta}{|z|}}\frac{1}{e^{i(\theta-\theta_0')}-y} y^{\alpha-1}f(|z|ye^{i\theta_0'})\,dy\right|
\end{split}
\end{equation}
for any $0 < \delta' < \delta/2$. Let $C>0$ be a constant such that $\left|\frac{1}{e^{i(\theta-\theta_0')}-y}\right| \leq \frac{C}{y+1}$ for any $\theta\in(-\pi, \theta_0/2)$ and any $y>0$. 
For an arbitrary $\varepsilon >0$, we can find $\delta'\in(0,\delta/2)$ such that $\sup_{w \in \mathcal{S}_{(-\pi, \theta_0')}(\delta')}|f(w)| < \varepsilon$, so that
\begin{equation}\label{eq0701}
\begin{split}
\left|\int_0^{\frac{\delta'}{|z|}}\frac{1}{e^{i(\theta-\theta_0')}-y} y^{\alpha-1}f(|z|ye^{i\theta_0'})\,dy\right| 
&\leq  CK\varepsilon,  
\end{split}
\end{equation}
where $K:=\int_0^\infty \frac{y^{\alpha-1}}{1+y}\,dy<\infty. $
Moreover, 
\begin{equation}\label{eq0702}
\begin{split}
\left|\int_{\frac{\delta'}{|z|}}^{\frac{\delta}{|z|}}\frac{1}{e^{i(\theta-\theta_0')}-y} y^{\alpha-1}f(|z|ye^{i\theta_0'})\,dy\right|
& \leq CM\int_{\frac{\delta'}{|z|}}^{\frac{\delta}{|z|}}\frac{y^{\alpha-1}}{1+y}\,dy,
\end{split}
\end{equation}
where $M:=\sup_{w \in \mathcal{S}_{(-\pi,\theta_0')}(\delta)}|f(w)|$.  If $\delta'' \in (0,\delta')$ is sufficiently small so that 
$\int_{\frac{\delta'}{|z|}}^{\frac{\delta}{|z|}}\frac{y^{\alpha-1}}{1+y}\,dy < \varepsilon$ for $|z| < \delta''$, we finally obtain 
\begin{equation}\label{eq0703}
\left|\int_{(0,\delta)e^{i\theta_0'}} \frac{1}{z-w} w^{\alpha-1}f(w)\,dw\right| \leq C(K+M)\varepsilon,~~|z|<\delta'', 
\end{equation}
which, together with (\ref{eq070}), shows that $\widetilde{G}_{\rho'}(z)$ is of order $o(|z|^{\alpha-1})$ for $\arg z \in [-\pi, \frac{1}{2}\theta_0')$. 

The case $z\in \mathcal{S}_{(-2\pi,-\pi]}(\frac{1}{2}\delta)$ is similar; we only have to displace the contour $(0,\delta)$ of $\widetilde{G}_{\rho'}$ to a curve in $\comp^-$. 

\textbf{Case $\alpha\in(1,2)$.} The second term of (\ref{eq070}) is analytic around $0$, so that one finds 
a constant $d \in \comp$ such that 
\begin{equation}\label{eq071}
c\int_{\{\delta e^{i\theta}: \theta \in [0,\theta_0']\}} \frac{1}{z-w} w^{\alpha-1}f(w)\,dw= d + O(z) = d+o(|z|^{\alpha-1})
\end{equation} 
for $z \in\mathcal{S}_{[-\pi, \frac{1}{2}\theta_0)}(\frac{1}{2}\delta)$. 
 By using the notation as in (\ref{eq072}), we have 
\begin{equation}\label{eq073}
\begin{split}
\int_{(0,\delta)e^{i\theta_0'}} \frac{1}{z-w} w^{\alpha-1}f(w)\,dw 
&=d'+\int_{(0,\delta)e^{i\theta_0'}} \frac{z}{z-w} w^{\alpha-2}f(w)\,dw\\
&=d'+ ie^{i(\alpha-1)\theta_0'}|z|^{\alpha-1}\int_0^{\frac{\delta}{|z|}}\frac{e^{i(\theta-\theta_0')}}{e^{i(\theta-\theta_0')}-y}y^{\alpha-2}f(|z|ye^{i\theta_0'})\,dy, 
\end{split}
\end{equation}
where 
$d':= -\int_{(0,\delta)e^{i\theta_0'}} w^{\alpha-2}f(w)\,dw.$ The second term  in (\ref{eq073}) is of order $o(|z|^{\alpha-1})$ from estimates like (\ref{eq0701})--(\ref{eq0703}). 
Therefore, we have obtained 
\begin{equation}\label{est1}
\widetilde{G}_{\rho'}(z)= \beta' +o(|z|^{\alpha-1}),~~\beta' \in\comp
\end{equation}
uniformly in the sector  $z \in \mathcal{S}_{[-\pi,\theta_0/2)}$. The case $z\in \mathcal{S}_{(-2\pi,-\pi]}(\frac{1}{2}\delta)$ is similar and so (\ref{est1}) holds in $\mathcal{S}_{(-2\pi, \theta_0/2)}$. The constant $\beta'$ is real because $\beta'=\widetilde{G}_{\rho'}(-0)$.

\emph{Step 3: Analysis of the compositional inverse of $F_\mu$.} Assuming $\mu$ to be FID, we deduce a contradiction. From Theorem \ref{thm0}, the map $F_\mu^{-1}$ analytically extends to $\comp^+$ via
\begin{equation}\label{inv}
F_\mu^{-1}(z):= \phi_\mu(z)+z,~~z\in\comp^+,  
\end{equation}
and hence $F_\mu$ is univalent in $\comp^+$ because $F^{-1}_\mu \circ F_\mu = \text{Id}$ in $\comp^+$ by identity theorem. So the map $F_\mu^{-1}$ defined in (\ref{inv}) is equal to the inverse map of $F_\mu|_{\comp^+}$ in the common domain $F_\mu(\comp^+).$ 

\textbf{Case $\alpha \in (1-\frac{1}{2n}, 1-\frac{1}{2n+1})$.} Note that this assumption is equivalent to $(2n-1)\pi <\theta(\alpha)<2n\pi$. First we neglect the perturbation $o(|z|^{\alpha-1})$ to get a rough picture of the idea. Let $\eta:= \frac{1}{2}\min\{2n\pi-\theta(\alpha),  \theta_0\}>0$. The map $F_\alpha(z):=-\gamma^{-1}(-z)^{1-\alpha}$ is a bijection from $\mathcal{S}_{(-\theta(\alpha) -\eta,\pi)}$ onto $\mathcal{S}_{(-(1-\alpha) \eta, \pi)}$, and hence we can define $F_\alpha^{-1}$ in $\mathcal{S}_{(-(1-\alpha) \eta, \pi)}$. Note that 
$\text{Im}\, F_\alpha (re^{-i\theta(\alpha)})=0$ for $r>0$, and that the point $re^{-i\theta(\alpha)}$ (as a point of $\comp$) is contained in $\comp^+$ because $-\theta(\alpha) \in(-2n\pi,-(2n-1)\pi)$.  Let $\phi_\alpha(z):=F_\alpha^{-1}(z)-z$. Then $\phi_\alpha(F_\alpha(re^{-i\theta(\alpha)})) =re^{-i\theta(\alpha)} - F_\alpha(re^{-i\theta(\alpha)})\in\comp^+$.  

Next, under the perturbation term $o(|z|^{\alpha-1})$, we prove the existence of a point $z_\alpha$ with angle close to $-\theta(\alpha)$ so that $z_\alpha\in \mathcal{S}_{(-2n\pi,-(2n-1)\pi)}$. Taking small $\eta_1,\eta_2, \eta_3>0$, we consider regions 
\[
U:= \mathcal{S}_{(-\theta(\alpha)-\eta, \pi)}(\eta_1), ~~~~~V:= \mathcal{S}_{(-\eta_2, \pi)}(\eta_3). 
\] 
Thanks to (\ref{master}), for sufficiently small $\eta_2,\eta_3 >0$, the boundary of $U$ is mapped by $F_\mu$ to a closed curve surrounding each point of $V$ exactly once. 
This implies that $F_\mu|_U$ takes each point of $V$ exactly once, and so  we can define $F_\mu|_U^{-1}$ in $V$.  Since $P$ is analytic, $P\circ F_\mu|_U^{-1}$ is also analytic in $V$, not necessarily univalent. Since $V$ intersects $F_\mu(\comp^+)$,  
the map $P\circ F_\mu|_U^{-1}$ coincides with the definition (\ref{inv}) in $V \cap \comp^+$ by the uniqueness of analytic continuation, and so 
$$
\phi_\mu(z) = P( F_\mu|_U^{-1}(z)) -z,~~z\in V\cap\comp^+,  
$$
which also extends $\phi_\mu$ to $V$ analytically. 

Take a $w_\alpha\in V \cap (0,\infty) \neq \emptyset$ and define $z_\alpha:= F_\mu|_U^{-1}(w_\alpha) \in U$. If $w_\alpha$ is close to $0$, then $\arg z_\alpha$ is close to $-\theta(\alpha)$ and so $z_\alpha\in \mathcal{S}_{(-2n\pi,-(2n-1)\pi)}$. Since $P(z_\alpha) \in \comp^+$,\footnote{Here the assumption $\alpha \in \mathcal{I}$ is used.} 
one finds that $\phi_\mu(w_\alpha) = P(z_\alpha)-w_\alpha \in \comp^+$, a contradiction to Theorem \ref{thm0}. 

Case $\alpha \in (1+\frac{1}{2n+1}, 1+\frac{1}{2n})$ is similar. 
%Note that the set $V$ is to be replaced by a $\mathcal{S}_{(-\eta_2, \pi)}\cap\{re^{i\theta}\in \mathcal{S}: r>R\}$ with large $R>0$.  
\end{proof}

\begin{exa} 
\begin{enumerate}[\rm(i)]
\item If $X \sim \bm{\beta}_{p,q}$ for $q \in\mathcal{I}$, then the law of $X^r$ is not FID for any $r \in \real \setminus\{0\}$. 
The density function of the law of $X^r$ is given by 
\[
\begin{cases}
\frac{1}{r B(p,q)}x^{\frac{p}{r}-1}(1-x^{\frac{1}{r}})^{q-1}, &0<x<1, ~r>0,\\ 
\frac{1}{|r| B(p,q)}x^{-\frac{p+q-1}{|r|}-1}(x^{\frac{1}{|r|}}-1)^{q-1}, &x>1, ~r<0,    
\end{cases}
\]
which behaves as $c |x-1|^{q-1}$ around $x=1$. 

\item The standard semicircle law $\mathbf{w}$, at $x=\pm2$, corresponds to $\alpha=\frac{3}{2}$ which is in the closure of $\mathcal{I}$, but $\mathbf{w}$ is FID. 

\item If $X \sim \mathbf{w}$, then $|X|^p \sim |\mathbf{w}|^p:=\frac{1}{\pi p}x^{\frac{1}{p}-1}\sqrt{4-x^{\frac{2}{p}}}\,1_{[0,2^p]}(x)\,dx$. Looking at the density function around $x=0$, the law $|\mathbf{w}|^p$ is not FID for $p\in \mathcal{I}^{-1}=\left(\bigcup_{n=1}^\infty(1-\frac{1}{2n+1}, 1-\frac{1}{2n+2})\right)\cup\left(\bigcup_{n=1}^\infty(1+\frac{1}{2n}, 1+\frac{1}{2n-1})\right)$. Note that $|\mathbf{w}|^p$ is FID for $p=2,4$ \cite[Example 7.4]{AHS}. 
\end{enumerate}
\end{exa}

\subsection{Method based on subordination function} 
We utilize subordination functions introduced by Voiculescu \cite{V93}, in order to show the following. 
\begin{prop}
 $\bm{\beta}_{p,q}$ is not FID for $0<p,q \leq 1$. 
\end{prop}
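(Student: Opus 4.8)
The plan is to argue by contradiction: assuming $\mu:=\bm{\beta}_{p,q}$ is FID, I will show that its free square root $\sigma:=\mu^{\boxplus 1/2}$ is forced to have two atoms. Since $\mu$ is FID there is $\sigma$ with $\mu=\sigma\boxplus\sigma$, and $\sigma$ is again FID because $\phi_\sigma=\tfrac12\phi_\mu$, so $-\phi_\sigma$ is a Pick function by Theorem \ref{thm0}. But a FID measure has at most one atom \cite{BB04}, giving the contradiction. (As a sanity check for the eventual conclusion $\sigma=\tfrac12(\delta_0+\delta_{1/2})$: $\big(\tfrac12(\delta_0+\delta_{1/2})\big)^{\boxplus 2}$ equals the arcsine law on $[0,1]$, i.e. $\bm{\beta}_{1/2,1/2}$, consistent with the boundary case $p=q=\tfrac12$.)

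The tool is the subordination function for $\mu=\sigma\boxplus\sigma$. By Voiculescu's subordination theory \cite{V93} there is an analytic $\omega:\comp^+\to\comp^+$ with $F_\mu(z)=F_\sigma(\omega(z))$, hence $G_\sigma(\omega(z))=G_\mu(z)$ on $\comp^+$; and because the two factors coincide, the usual identity $\omega_1+\omega_2=z+F_\mu$ specializes to $2\omega(z)=z+F_\mu(z)$, so $\omega(z)=\tfrac12\big(z+F_\mu(z)\big)$ is explicit. First I would record that $F_\mu$ has exactly two boundary zeros on $\real$: the density of $\bm{\beta}_{p,q}$ behaves like $c\,x^{p-1}$ near $0$ and like $c'(1-x)^{q-1}$ near $1$, and the hypothesis $p,q\le1$ is precisely what makes $|G_\mu(x+i0)|\to\infty$ as $x\to 0^+$ and as $x\to 1^-$ (a power blow-up when $p<1$ resp. $q<1$, a logarithmic one when $p=1$ resp. $q=1$; this is read off from Proposition \ref{prop78} and the expansions \eqref{eq13}, \eqref{eq14}, \eqref{eq123}, \eqref{eq1230}). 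Hence $F_\mu(0+i0)=F_\mu(1+i0)=0$, and therefore $\omega(0+i0)=\tfrac12 F_\mu(0+i0)=0$ while $\omega(1+i0)=\tfrac12\big(1+F_\mu(1+i0)\big)=\tfrac12$ — two distinct real points.

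Next I would transport the local behaviour of $\omega$ at these points through $G_\sigma\circ\omega=G_\mu$. Near $z=0$, using $G_\mu(z)\sim -\tfrac{\pi}{B(p,q)\sin\pi p}(-z)^{p-1}$ (with the logarithmic analogue when $p=1$) one gets $F_\mu(z)\sim A^{-1}(-z)^{1-p}$, which dominates the linear term, so $\omega(z)\sim \tfrac12 A^{-1}(-z)^{1-p}$; inverting this local power map (on the branch consistent with $\omega(\comp^+)$, which is automatically one-to-one there) and composing with $F_\mu$, the powers $1-p$ and $1/(1-p)$ cancel and one finds $F_\sigma(w)\sim 2w$ as $w\to 0$ inside $\Omega:=\omega(\comp^+)$, hence $G_\sigma(w)\sim \tfrac1{2w}$ there. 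Since $w\,G_\sigma(w)\to\sigma(\{0\})$ along every nontangential approach to $0$ in $\comp^+$, and $\Omega$ contains a nontangential sector at $0$, this yields $\sigma(\{0\})=\tfrac12$. The mirror computation at $z=1$, using $G_\mu(z)\sim \tfrac{\pi}{B(p,q)\sin\pi q}(z-1)^{q-1}$, gives $F_\sigma(w)\sim 2(w-\tfrac12)$ near $w=\tfrac12$ and hence $\sigma(\{\tfrac12\})=\tfrac12$. Thus $\sigma$ has two atoms (in fact $\sigma=\tfrac12(\delta_0+\delta_{1/2})$), which is the desired contradiction.

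The main obstacle is the third step: making the local inversion of $\omega$ rigorous, i.e. controlling the error terms in the expansion of $F_\sigma=F_\mu\circ\omega^{-1}$ sharply enough to conclude that $G_\sigma$ has a genuine simple pole (of residue $\tfrac12$) and not merely an unbounded radial limit in one direction, and checking that the limit detecting $\sigma(\{0\})$ may legitimately be taken inside $\Omega$ (a sector of opening $(1-p)\pi$ at $0$) rather than along the imaginary axis. Here the explicit hypergeometric form of $G_{\bm{\beta}_{p,q}}$ and its boundary expansions from Sections \ref{sec2}--\ref{sec3}, together with the continuity-up-to-the-boundary of subordination functions, are what is needed; the cases $p=1$ and $q=1$ require a parallel but slightly different computation (starting from the logarithmic blow-up of $G_\mu$) that leads to the same estimates $F_\sigma(w)\sim 2w$ and $F_\sigma(w)\sim 2(w-\tfrac12)$.
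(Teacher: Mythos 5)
Your strategy --- extract $\sigma:=\mu^{\boxplus 1/2}$, use $2\omega=z+F_\mu$ to show $\sigma$ has atoms of mass $\tfrac12$ at $0$ and $\tfrac12$, and contradict the one-atom theorem of Belinschi--Bercovici --- is sound and genuinely different from the paper's. The paper instead lets $s\to 0$ in the inequality $|\omega_{s,t}(z_1)-\omega_{s,t}(z_2)|\ge\tfrac12|z_1-z_2|$ from \cite{BB05} to conclude that $F_\mu$ itself is injective on $\comp^+\cup\real$, and then simply notes that $F_\mu$ vanishes at both $0$ and $1$; that route is much shorter because it never needs the local asymptotics of $G_\mu$. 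Also, the "main obstacle" you flag (inverting the local power map) is avoidable: from $G_\sigma\circ\omega=G_\mu$ and $2\omega=z+F_\mu$ one gets directly $\omega(z)\,G_\sigma(\omega(z))=\tfrac12\bigl(1+zG_\mu(z)\bigr)\to\tfrac12$ as $z\to0$ in $\comp^+$, since $zG_\mu(z)\to\mu(\{0\})=0$ nontangentially; no inversion and no error control on $F_\sigma$ are needed, only that $\omega(iy)\to0$ \emph{nontangentially}.

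That last point is where the genuine gap sits, and it concerns the boundary cases $p=1$ or $q=1$, which are part of the statement. For $0<p<1$ everything is fine: $F_\mu(iy)\sim-\frac{B(p,q)\sin\pi p}{\pi}(-iy)^{1-p}$ tends to $0$ with argument tending to $\pi(1+p)/2\in(\tfrac{\pi}{2},\pi)$, so $\omega(iy)\sim\tfrac12F_\mu(iy)$ approaches $0$ nontangentially and the criterion $wG_\sigma(w)\to\sigma(\{0\})$ applies. But for $p=1$ the density has a jump at $0$ rather than a power blow-up, so $G_\mu(z)\approx c\log z+O(1)$ with the \emph{real} part dominating; then $F_\mu(iy)\approx\frac{1}{c\log y}+\frac{i\pi}{2c\log^2y}$ tends to $0$ \emph{tangentially} (essentially along the negative real axis), and so does $\omega(iy)$ --- in fact $\omega$ maps a neighbourhood of $0$ in $\comp^+$ into a tangential region, so $\Omega$ contains no nontangential approach to $0$ and your atom-detection criterion does not apply. (The same occurs at $z=1$, hence at $w=\tfrac12$, when $q=1$.) This is not merely "a parallel computation with the same estimates": a limit $wG_\sigma(w)\to\tfrac12$ along a tangential curve does not by itself produce an atom. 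The step can be repaired via the Julia--Wolff--Carath\'eodory theorem, because along $w_y=\omega(iy)$ one has $\frac{\mathrm{Im}\,F_\sigma(w_y)}{\mathrm{Im}\,w_y}=\frac{2}{1+y/\mathrm{Im}\,F_\mu(iy)}\to2$ (since $y\log^2y\to0$), and an \emph{unrestricted} liminf of $\mathrm{Im}\,F_\sigma(w)/\mathrm{Im}\,w$ at most $2$ at the point $0$ forces an angular derivative $\le2$ and hence $\sigma(\{0\})\ge\tfrac12$; but as written your proof covers only $0<p,q<1$.
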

\begin{proof}

Let $\mu$ be FID and $\mu_t:=\mu^{\boxplus t}$. 
For $s \leq t$, a function $\omega_{s,t}: \mathbb{C}^+\to\mathbb{C}^+$ exists so that it satisfies $F_{\mu_s} \circ \omega_{s,t} = F_{\mu_t}$. The map $\omega_{s,t}$ is called the \emph{subordination function} for $(\mu_t)_{t \geq 0}$. We can write $\omega_{s,t}$ in terms of $F_{\mu_t}$: 
\begin{equation}\label{eq1}
F_{\mu_t}(z)= \frac{t/s}{t/s-1}\omega_{s,t}(z)-\frac{z}{t/s-1}.
\end{equation} 
It is proved in Theorem 4.6 of \cite{BB05} that $\omega_{s,t}$ and hence 
$F_{\mu_t}$ extends to a continuous function from $\mathbb{C}^+\cup \real$ into itself. 
Moreover $\omega_{s,t}$ satisfies the inequality 
$$
|\omega_{s,t}(z_1)-\omega_{s,t}(z_2)| \geq \frac{1}{2}|z_1 - z_2|, ~~~z_1,z_2 \in \mathbb{C}^+ \cup \real. 
$$ 
Taking the limit $s \to 0$ in (\ref{eq1}), we get 
$$
|F_{\mu_t}(z_1)-F_{\mu_t}(z_2)| \geq \frac{1}{2}|z_1 - z_2|, ~~~z_1,z_2 \in \mathbb{C}^+ \cup \real, 
$$ 
so that $F_{\mu_t}$ is injective on $\comp^+\cup \real$. 
 
For $p,q \in (0,1]$, the density of $\bm{\beta}_{p,q}$ is not continuous at two points $0,1$, so that its 
reciprocal Cauchy transform is zero at $z=0,1$, which implies that the measure is not FID. 
\end{proof}

\subsection{Hankel determinants of free cumulants for $p=1$ or $q=1$}\label{Hankel} Instead of the analytic method, one can also compute \emph{free cumulants} $(r_n)_{n\geq 1}$ to show that a measure is not FID. The reader is referred to \cite{BG06} and \cite{NS06} for information on free cumulants. 
%If $\mu$ is a probability measure with finite $k$th moments, then for any $\alpha>0$ there is $M>0$ such that  
%$$
%\phi_\mu(z)= \sum_{n=1}^k\frac{r_n}{z^{n-1}} +o(z^{-(k-1)}), ~~z\to \infty, z\in \Gamma_{\alpha,M};  
%$$
%see \cite{B05}. The coefficients $r_n$ are called the \emph{free cumulants} of $\mu.$ 
%If the Hankel determinant $\text{det}(r_{i+j})_{1 \leq i,j \leq m}$ is negative for some $m \leq k$, then 
%$\mu$ is not FID.   

The exponential distribution is the limit of $D_{q}\bm{\beta}_{1, q}$ as $q \to\infty$. It is not FID since the 16th Hankel determinant 
$
\begin{vmatrix}
r_2 &r_3& r_4 & \cdots &r_{17}\\
r_3&r_4&r_5&\cdots & r_{18} \\
&\cdots&\cdots&\cdots&\\
r_{17} &r_{18}&r_{19}&\cdots&r_{32}
\end{vmatrix}
$ 
of $(r_n)_{n \geq 2}$
 is negative. This implies that $\bm{\beta}_{1,q}$ is not FID for large $q>0$, because the set of non FID distributions is open with respect to the weak convergence. 
 For smaller $q>0$, $\bm{\beta}_{1,q}$ is still not FID; they have negative Hankel determinants for $q=1,2,\cdots,15$.
 %\footnote{${}^{,5}$The Hankel determinants were computed by Mathematica. \label{aaa}} 
 
 The beta prime distribution $\bm{\beta}'_{1, q}$ is called the Pareto distribution. With suitable scaling, they also converge to the exponential distribution as $q \to \infty$, so that $\bm{\beta}'_{1,q}$ is not FID for large $q$. Actually $\bm{\beta}'_{1,q}$ is not FID for $q=60,61,62, 70, 90,100, 150$ because their 26th, 25th, 24th, 21th, 18th, 18th, 16th Hankel determinants are negative respectively.
 
Thus $\bm{\beta}_{1,q}$ and $\bm{\beta}_{1,q}'$ are not FID for many parameters $q>0$. 
Recalling that $\bm{\beta}_{p,2-p}$ is not FID for $\frac{1}{2}<p<\frac{3}{2}$ \cite[Theorem 5.5]{AHb} and $\bm{\beta}_{p,p}$ is not FID for $0<p<\frac{3}{2}$ \cite[Corollary 4.1]{AP10}, 
the author poses the following conjectures. 
\begin{conj}
\begin{enumerate}[(1)]

\item $\bm{\beta}_{p,q}$ and $\bm{\beta}_{p,q}'$ are not FID for $ p \in (\frac{1}{2},\frac{3}{2}), q>0$. 

\item\label{conj2}$\bm{\beta}_{p,q}$ is FID if $(p,q) \in(0, \frac{1}{2}] \times [\frac{3}{2}, 2]$ or if $(p,q) \in  [\frac{3}{2},2]\times (0,\frac{1}{2}]$. 
\item $\bm{\gamma}_p$ is not FID for $p \in(\frac{1}{2},\frac{3}{2}).$
\end{enumerate}
\end{conj}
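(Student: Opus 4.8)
The conjecture packages three ``gap-filling'' statements. Parts (1) and (3) would follow at once from a sharpening of the local criterion of Theorem~\ref{thm90} in which the hypothesis $\alpha\in\mathcal{I}$ is weakened to $\alpha\in(\frac12,\frac32)\setminus\{1\}$: granted such a criterion, (1) follows from the edge behaviours $\bm{\beta}_{p,q}(dx)\sim c\,x^{p-1}\,dx$ and $\bm{\beta}'_{p,q}(dx)\sim c\,x^{p-1}\,dx$ at $x=0$, and (3) from $\bm{\gamma}_p(dx)\sim\frac{1}{\Gamma(p)}x^{p-1}\,dx$ at $0$, the remainders $(1-x)^{q-1}-1$, $(1+x)^{-p-q}-1$ and $e^{-x}-1$ being analytic and vanishing at $0$ and hence satisfying hypotheses (ii)--(iii) of Theorem~\ref{thm90}. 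Part (2) asks instead to enlarge the FID region of Theorem~\ref{thm11} from its case $\{0<p\le\frac12,\ p+q\ge2\}$ to $\{0<p\le\frac12,\ \frac32\le q\le2\}$, i.e.\ to cover the extra strip $\frac32\le p+q<2$ (and, by the affine symmetry $x\mapsto1-x$, the mirror strip with $p,q$ exchanged). So the plan is to push the two machines of Sections~\ref{sec3} and~\ref{subsec12}, not to build a new one.

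For (2) I would re-run the proof of Theorem~\ref{thm11} in the case $0<p<\frac12$ for the wider range $\frac32\le q\le2$. Steps~1--2 of that proof survive: Proposition~\ref{prop78} still continues $G_{\bm{\beta}_{p,q}}$ to $\mathcal{E}^b$, the boundary computations (\ref{eq13})--(\ref{eq15}) go through (with $q\ge\frac32$ one still gets the \emph{finite} limit $\frac{p+q-1}{q-1}$ in (\ref{eq14})), and the local expansions (\ref{eq123}), (\ref{eq1230}), (\ref{eq124}) are unchanged. The single obstacle is condition (\ref{diffA}): in Lemma~\ref{lem3}(1) the hypothesis $p+q>2$ was used so that, through (\ref{eq980}), a point $z\in\comp^-$ cannot be a zero of $G'_{\bm{\beta}_{p,q}}$ at which $G_{\bm{\beta}_{p,q}}(z)\in\comp^-$; when $p+q<2$ the coefficient $p+q-2$ is negative, the sign flips, and such critical points are no longer excluded on all of $\mathcal{E}^b$. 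My proposal is to impose (\ref{diffA}) only on the Jordan domain $D(C)$ produced in Step~2 of the proof rather than on all of $\mathcal{E}^b$: the ``bad locus'' $\{z:G'_{\bm{\beta}_{p,q}}(z)=0,\ G_{\bm{\beta}_{p,q}}(z)\in\comp^-\}$ is cut out by the single linear relation (\ref{eq980}), so using the explicit hypergeometric formulas (\ref{eq123})--(\ref{eq1230}) one can locate it and verify it lies outside $D(C)$ --- geometrically, on the far side of the curve $C$ from $\comp^+$. Since condition (\ref{diffC}) is already available for $p+q\ne1,2$ by Lemma~\ref{lem3}, Proposition~\ref{prop91}(1) then yields $\bm{\beta}_{p,q}\in\mathcal{UI}$. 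I expect the position of this critical locus relative to $D(C)$ to be the delicate point and the place where the conjecture could genuinely fail: if a critical point does fall inside $D(C)$ one must shrink the domain further, or pass to $\phi_\mu$ directly via Theorem~\ref{thm0}(2).

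For (1) and (3) the difficulty is real. In Step~3 of the proof of Theorem~\ref{thm90} one probes a single boundary point $w_\alpha\in(0,\infty)$ near $0$ whose $F_\mu$-preimage $z_\alpha$ has argument close to $-\theta(\alpha)$; the contradiction with Theorem~\ref{thm0} appears only because, for $\alpha\in\mathcal{I}$, that argument lands on the sheet of $\mathcal{S}$ on which $P(z_\alpha)\in\comp^+$, forcing $\phi_\mu(w_\alpha)=P(z_\alpha)-w_\alpha\in\comp^+$. For $\alpha\in(\frac12,\frac32)\setminus(\mathcal{I}\cup\{1\})$ the angle $\theta(\alpha)$ sits in an interval of the opposite parity, so $P(z_\alpha)\in\comp^-$ and $\phi_\mu(w_\alpha)\in\comp^-$, which is \emph{consistent} with $-\phi_\mu$ being a Pick function; moreover the branch point of $F_\mu^{-1}$ lies on $\real$, so multivaluedness near it does not by itself obstruct free infinite divisibility. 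Thus a one-point probe cannot reach these exponents and a new input is needed. Two routes seem plausible. First, use the extra regularity of FID measures --- the subordination function and the continuous injective extension of $F_\mu$ to $\comp^+\cup\real$ of \cite{BB05} --- to upgrade the local multi-sheeted picture of $F_\mu^{-1}$ near $0$ into a global winding/degree contradiction with univalence on $\comp^+$. Second, compute the free cumulants $r_n(p,q)$, which are rational functions of $(p,q)$ (the moments of $\bm{\beta}_{p,q}$ being $(p)_n/(p+q)_n$), and show that some Hankel determinant of $(r_{n+2})_{n\ge0}$ is negative throughout each gap interval, continuing the Lehner-type computations of Section~\ref{Hankel}; this needs control of the sign of an explicit rational function of $p$ uniformly on an interval, and the right choice of determinant size. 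The endpoint $\alpha=1$ (i.e.\ $p=1$, or $q=1$, with the other parameter $>1$) is of a different nature: there the density is merely bounded with a jump at the edge $x=0$, so $F_\mu(0)=0$; for $p=q=1$ a second such zero contradicts injectivity of $F_\mu$ on $\comp^+\cup\real$ (the argument of Section~\ref{subsec12}), but for $q>1$ one zero is not enough and one is thrown back onto the Hankel route, or onto a direct study of the logarithmic singularity of $F_{\bm{\beta}_{1,q}}$ at $0$.

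The hardest step --- and where I expect the conjecture really resists --- is exactly the removal of $\alpha\in\mathcal{I}$ for the gap exponents of parts (1) and (3): the single-point contradiction of Theorem~\ref{thm90} genuinely disappears there, so one needs a qualitatively new argument (a global degree count, or a systematic Hankel-determinant analysis), whereas part (2) is ``only'' a careful relocation of an exceptional set inside a domain construction that is already in place.
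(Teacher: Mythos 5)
The statement you were asked to prove is not a theorem of the paper: it is explicitly posed as a \emph{conjecture}, supported only by numerical evidence (negative Hankel determinants of free cumulants for $\bm{\beta}_{1,q}$ and $\bm{\beta}'_{1,q}$ at scattered values of $q$ in Section \ref{Hankel}, the remark that part (\ref{conj2}) ``seems to be true from numerical computation''), and the paper immediately afterwards poses as an open Problem exactly the extension of Theorem \ref{thm90} to all $\alpha\in(\frac12,\frac32)$ that your parts (1) and (3) would require. So there is no proof in the paper to compare against, and your text is, by your own admission, a research programme rather than a proof. Your diagnosis of where the obstructions sit is accurate and matches the author's own commentary: for part (2) the only step of Theorem \ref{thm11} that breaks when $\frac32\le p+q<2$ is condition (\ref{diffA}) via (\ref{eq980}) (the $z\in\comp^+$ branch of that argument still gives a contradiction since $\frac{p+q-2}{p+q-1}<0$ forces $\operatorname{Im}F_\mu(z)<0<\operatorname{Im}z$; it is the $z\in\comp^-$ branch where the sign of $p+q-2$ flips the conclusion), and for parts (1), (3) the one-point probe of Step 3 of Theorem \ref{thm90} lands in $\comp^-$ for $\alpha$ in the gaps of $\mathcal{I}$, which is consistent with $-\phi_\mu$ being Pick.

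The genuine gap is that none of these obstructions is actually removed. For part (2), the assertion that the critical locus $\{z: G'_{\bm{\beta}_{p,q}}(z)=0,\ G_{\bm{\beta}_{p,q}}(z)\in\comp^-\}$ lies outside the Jordan domain $D(C)$ is exactly the content that would need to be proved, and you give no argument for it; note also that the construction of $C$ in Step 2 itself uses the prolongation of curves via (\ref{diffA})/(\ref{diffC}), so restricting the verification of (\ref{diffA}) to $D(C)$ is circular unless you first build $C$ using only (\ref{diffC}) (which Lemma \ref{lem3} does supply for $p+q\neq 1,2$) and then separately locate the critical points. For parts (1) and (3) you correctly observe that a qualitatively new idea is needed and propose two routes (a global winding argument using the subordination regularity of \cite{BB05}, or a uniform sign analysis of Hankel determinants of the rational functions $r_n(p,q)$), but neither is carried out, and the second would require controlling the sign of explicit rational functions on whole intervals, which is far from automatic. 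In short: your proposal is a correct and honest map of the open problem, but it proves none of the three assertions, which remain conjectural in the paper as well.
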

The conjecture (\ref{conj2}) seems to be true from numerical computation. This case however is not covered by Theorem \ref{thm11} because the assumption $p+q > 2$ is crucial to prove Lemma \ref{lem3}.  

One may expect that the proof of Theorem \ref{thm90} also applies to any $\alpha \in (\frac{1}{2}, \frac{3}{2})$, but just a slight modification seems not sufficient for that purpose. 
\begin{prob}
Does Theorem \ref{thm90} extend to arbitrary $\alpha \in (\frac{1}{2}, \frac{3}{2})$? 
\end{prob}

\section{Free infinite divisibility for Student t-distribution}\label{st}

We are going to utilize Proposition \ref{prop91}(2) to prove that t-distributions are FID. 

\begin{prop}\label{prop32}
The Cauchy transform $G_{\mathbf{t}_q}$ analytically extends  to the domain $\mathcal{D}^{st}:= (\comp^- \cup \mathbb{H}^+)\setminus i[-1,0]$. We denote the analytic continuation by $G_{\mathbf{t}_q}$ too. Then 
\[
G_{\mathbf{t}_q}(z)=
\widetilde{G}_{\mathbf{t}_q}(z) -\frac{2\pi i}{B(\frac{1}{2}, q-\frac{1}{2})} (1+z^2)^{-q},~~ z\in \comp^{-}\setminus i[-1,0),
\]
where $(1+z^2)^{-q}$ is defined analytically in $\mathcal{D}^{st}$ so that $(1+x^2)^{-q} \in \real$ for $x >0$. 
\end{prop}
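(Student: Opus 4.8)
The plan is to mirror the proof of Proposition~\ref{prop78}: extend the density to a complex-analytic function off a suitable slit, and observe that in $\comp^-$ the transform $\widetilde G_{\mathbf{t}_q}$ differs from the analytic continuation of $G_{\mathbf{t}_q}|_{\comp^+}$ by a residue term equal to $-2\pi i$ times that extended density. The first task is to pin down the right branch of $(1+z^2)^{-q}$. Its branch points are $\pm i$, and $1+z^2\in(-\infty,0]$ precisely on $i(-\infty,-1]\cup i[1,\infty)$, so the principal branch is analytic only on $\comp\setminus(i(-\infty,-1]\cup i[1,\infty))$; this is not sufficient, because $\mathcal{D}^{st}$ contains the half-line $i(-\infty,-1)$, where $1+z^2<0$. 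Instead I would take the branch whose cut is the ray $i[-1,\infty)$: the slit plane $\comp\setminus i[-1,\infty)$ is simply connected and $1+z^2$ is analytic and nonvanishing there, so $\log(1+z^2)$ has a single-valued analytic branch, which I normalize by $\log(1+x^2)\in\real$ for $x>0$, and set $(1+z^2)^{-q}:=\exp(-q\log(1+z^2))$. Since $\mathcal{D}^{st}$ excludes $i[-1,0]$ by definition and excludes the positive imaginary axis because that lies outside $\comp^-\cup\mathbb{H}^+$, we have $i[-1,\infty)\cap\mathcal{D}^{st}=\emptyset$, hence $(1+z^2)^{-q}$ is analytic on $\comp\setminus i[-1,\infty)\supseteq\mathcal{D}^{st}$ and is positive on $(0,\infty)$, as required.

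For the continuation I would argue exactly as in Proposition~\ref{prop78}. For each $x_0\in\real\setminus\{0\}$ the density is analytic on a disc $D(x_0,r)$ disjoint from the slit $i[-1,\infty)$, so by Cauchy's theorem the segment $[x_0-r,x_0+r]$ in the integral $G_{\mathbf{t}_q}(z)=\int_\real \frac{1}{B(\frac{1}{2},q-\frac{1}{2})}\frac{(1+x^2)^{-q}}{z-x}\,dx$ ($z\in\comp^+$) may be replaced by the lower semicircular arc of $\partial D(x_0,r)$; the resulting integral is analytic in $z$ on a full neighbourhood of $x_0$, hence continues $G_{\mathbf{t}_q}$ across $\real\setminus\{0\}$, and evaluating it at $z=x_0-i\varepsilon$ and pushing the arc back across the simple pole at $w=z$ produces the residue $-\frac{2\pi i}{B(\frac{1}{2},q-\frac{1}{2})}(1+z^2)^{-q}$. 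Thus on $\comp^-$ the continuation equals $\widetilde G_{\mathbf{t}_q}(z)-\frac{2\pi i}{B(\frac{1}{2},q-\frac{1}{2})}(1+z^2)^{-q}$, which is precisely the Sokhotski--Plemelj/residue computation behind (\ref{eqbeta}). With the branch fixed above, $G_{\mathbf{t}_q}$ is analytic on $\comp^+$ and $\widetilde G_{\mathbf{t}_q}(z)-\frac{2\pi i}{B(\frac{1}{2},q-\frac{1}{2})}(1+z^2)^{-q}$ is analytic on $\comp^-\setminus i[-1,0)$; the two agree across $\real\setminus\{0\}$, so they glue to a function analytic on $\comp^+\cup(\real\setminus\{0\})\cup(\comp^-\setminus i[-1,0))$, a connected open set containing $\mathcal{D}^{st}$. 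Restricting to $\mathcal{D}^{st}$ gives the asserted continuation and formula. (Alternatively one could feed the expression $G_{\mathbf{t}_q}(z)=\frac{q-1/2}{q}\frac{1}{z}F(1,\frac{1}{2};1+q;1+z^{-2})$ of Proposition~\ref{prop113}(3) through Lemma~\ref{lem2}(3) and Proposition~\ref{prop78}(2), but then one must carefully track the branches induced by $z\mapsto z^2$.)

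The main obstacle is the bookkeeping around the branch cut. One must notice that the cut of $(1+z^2)^{-q}$ has to be taken as $i[-1,\infty)$ (the segment $i[-1,1]$ joined to the ray $i[1,\infty)$) rather than the principal one, so that the function stays analytic on all of $\mathcal{D}^{st}$ --- the delicate region being $i(-\infty,-1)$, where $1+z^2<0$ yet the continuation must be defined and single-valued. One then has to verify that the slit $i[-1,0)$ removed from $\comp^-$ is exactly the intersection of this cut with $\comp^-$, that the domain built from the local continuations across $\real\setminus\{0\}$ genuinely contains the connected open set $\mathcal{D}^{st}$, and that no new branch ambiguity appears when one roams from the fourth quadrant around $-i$ into the third quadrant. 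Once the branch is chosen correctly, the remaining steps are the routine Cauchy-theorem and residue argument already carried out for the beta and beta prime distributions.
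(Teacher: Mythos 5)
Your branch identification and the contour-deformation/residue argument are the paper's intended route (its proof of Proposition~\ref{prop32} is literally ``quite similar to that of Proposition~\ref{prop78}''), and the choice of the cut $i[-1,\infty)$ is exactly right. But one step is false: the claim that $G_{\mathbf{t}_q}|_{\comp^+}$ and $H(z):=\widetilde G_{\mathbf{t}_q}(z)-\frac{2\pi i}{B(\frac12,q-\frac12)}(1+z^2)^{-q}$ (with your branch) ``agree across $\real\setminus\{0\}$'' and hence glue to a single analytic function on $\comp\setminus i[-1,0]$. They agree only across $(0,\infty)$. To reach $(-\infty,0)$ from $(0,\infty)$ inside $\comp\setminus i[-1,\infty)$ one must circle \emph{below} $-i$, along which $\arg(1+z^2)$ runs from $0$ to $-2\pi$; so your branch equals $(1+x^2)^{-q}e^{2\pi i q}$ on $(-\infty,0)$, whereas the Sokhotski--Plemelj jump of $\widetilde G_{\mathbf{t}_q}$ there is $-2\pi i$ times the \emph{real positive} density value. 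Hence for $x<0$,
\[
H(x-i0)-G_{\mathbf{t}_q}(x+i0)=\frac{2\pi i}{B(\frac12,q-\frac12)}\,(1+x^2)^{-q}\left(1-e^{2\pi i q}\right)\neq 0 \quad (q\notin\mathbb{Z}).
\]
Equivalently, the continuation of $G_{\mathbf{t}_q}$ has nontrivial monodromy $e^{2\pi i q}$ around $-i$, so no single-valued continuation to $\comp\setminus i[-1,0]$ exists at all; the paper's own computation of $G_{\mathbf{t}_q}(x-i0)$ for $x\le 0$ in the proof of Theorem~\ref{thm31}(2), which carries precisely the factor $e^{2\pi q i}$ and yields a boundary value different from $\widetilde G_{\mathbf{t}_q}(x+i0)$, exhibits this mismatch. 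Your local continuation across a point $x_0<0$ picks up the residue of the \emph{locally} continued (real-on-$\real$) density, not of your global branch, which is where the two computations part ways.

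The error does not threaten the proposition, because $\mathcal{D}^{st}\cap\real=(0,\infty)$: you only need to glue the first-quadrant piece of $G_{\mathbf{t}_q}$ to $H|_{\comp^-\setminus i[-1,0)}$ across $(0,\infty)$, where the boundary values genuinely match, and $H$ is already analytic on all of $\comp^-\setminus i[-1,0)$ since $\widetilde G_{\mathbf{t}_q}$ is analytic in $\comp^-$ and your branch is analytic off $i[-1,\infty)$. So drop the continuation across $(-\infty,0)$ entirely: the extension to the third-quadrant part of $\mathcal{D}^{st}$ is furnished by the global formula (i.e., by travelling around below $-i$), not by crossing the negative real axis. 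With that correction the proof is complete and coincides with the paper's.
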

\begin{proof}
 The proof is quite similar to that of Proposition \ref{prop78}. 
 \end{proof}

For condition (\ref{diffB}), its proof is based on a recursive differential equation that is quite similar to Lemma \ref{dif}. 
\begin{lem} \label{lem11}
\begin{enumerate}[\rm(1)]
\item \label{st1} $\widetilde{G}_{\mathbf{t}_{q+1}}(z) = \frac{1}{1+z^2}\left( \frac{q}{q-1/2}\widetilde{G}_{\mathbf{t}_q}(z) +z \right), \quad z\in\comp^+$. 

\item \label{lem31}$\frac{d}{dz} \widetilde{G}_{\mathbf{ t}_{q}}(z) = (2q-1)\left(1 -z \widetilde{G}_{\mathbf{ t}_{q+1}}(z)\right) = \frac{2q}{1+z^2}\left(\frac{q-1/2}{q}-z\widetilde{G}_{\mathbf{t}_{q}}(z)\right),\quad z\in\comp^+$.  
\end{enumerate}
\end{lem}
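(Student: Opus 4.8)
The plan is to obtain both identities directly from the integral representation
\[
\widetilde{G}_{\mathbf{t}_q}(z) = \frac{1}{B(\frac{1}{2}, q-\frac{1}{2})}\int_{\real}\frac{1}{z-x}\,\frac{dx}{(1+x^2)^q}, \qquad z\in\comp^+,
\]
using only elementary partial-fraction manipulations together with $\int_{\real}(1+x^2)^{-q}\,dx = B(\frac{1}{2}, q-\frac{1}{2})$ and the ratio $B(\frac{1}{2},q-\frac{1}{2})/B(\frac{1}{2},q+\frac{1}{2}) = q/(q-\frac{1}{2})$, both of which follow from $B(a,b)=\Gamma(a)\Gamma(b)/\Gamma(a+b)$ and $\Gamma(s+1)=s\Gamma(s)$.

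For part (1), I would multiply the integrand of $\widetilde{G}_{\mathbf{t}_q}$ above and below by $1+x^2$ and then substitute $1+x^2 = (1+z^2) - (z-x)(z+x)$. This splits the integral into $(1+z^2)$ times $\int_{\real}(z-x)^{-1}(1+x^2)^{-(q+1)}\,dx$ minus $\int_{\real}(z+x)(1+x^2)^{-(q+1)}\,dx$; in the latter the odd part vanishes by symmetry and the even part equals $z\,B(\frac{1}{2},q+\frac{1}{2})$. Dividing out the normalizing constants and applying the ratio identity gives $\frac{q}{q-1/2}\widetilde{G}_{\mathbf{t}_q}(z) = (1+z^2)\widetilde{G}_{\mathbf{t}_{q+1}}(z)-z$, which rearranges to part (1).

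For part (2), I would differentiate under the integral sign to get $\widetilde{G}_{\mathbf{t}_q}'(z) = -\frac{1}{B(\frac{1}{2},q-\frac{1}{2})}\int_{\real}(z-x)^{-2}(1+x^2)^{-q}\,dx$, then integrate by parts using $\partial_x(z-x)^{-1} = (z-x)^{-2}$; the boundary term vanishes since $q>0$, leaving $\widetilde{G}_{\mathbf{t}_q}'(z) = \frac{2q}{B(\frac{1}{2},q-\frac{1}{2})}\int_{\real}x(z-x)^{-1}(1+x^2)^{-(q+1)}\,dx$. Writing $\frac{x}{z-x} = -1 + \frac{z}{z-x}$ turns the right-hand side into $\frac{2q\,B(\frac{1}{2},q+\frac{1}{2})}{B(\frac{1}{2},q-\frac{1}{2})}(z\widetilde{G}_{\mathbf{t}_{q+1}}(z)-1)$, and the ratio identity (now in the form $B(\frac{1}{2},q+\frac{1}{2})/B(\frac{1}{2},q-\frac{1}{2}) = (q-\frac{1}{2})/q$) yields the first equality $\widetilde{G}_{\mathbf{t}_q}'(z) = (2q-1)(1-z\widetilde{G}_{\mathbf{t}_{q+1}}(z))$. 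The second equality is then pure algebra: I would substitute the expression for $\widetilde{G}_{\mathbf{t}_{q+1}}(z)$ from part (1) into $(2q-1)(1-z\widetilde{G}_{\mathbf{t}_{q+1}}(z))$ and simplify, using $\frac{2q-1}{q-1/2}=2$.

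These computations are routine, so I do not expect a serious obstacle; the only points needing brief justification are differentiating under the integral sign and the vanishing of the boundary term in the integration by parts, both immediate for $z\in\comp^+$ (where $|z-x|$ is bounded below) and $q>\frac{1}{2}$ (so that all the relevant integrands decay like $|x|^{-1-2q}$ or faster). One could alternatively deduce part (2) from Lemma \ref{dif} together with the relation $G_{\mathbf{t}_q}(z)=z\widetilde{G}_{\bm{\beta}'_{1/2,q-1/2}}(z^2)$ of Lemma \ref{lem2}(\ref{square}) and the chain rule, but the direct argument above is shorter.
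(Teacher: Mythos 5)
Your argument is essentially the paper's own proof: part (1) is obtained by inserting $1+x^2=(1+z^2)-(z-x)(z+x)$ into the integrand (the paper writes the same identity in the expanded form $(1+z^2)-2z(z-x)+(z-x)^2$), and part (2) by differentiating under the integral, integrating by parts, and splitting $\frac{x}{z-x}$, exactly as in Lemma \ref{lem11}'s proof. One sign slip to fix in part (2): since $\int_\real (z-x)^{-2}(1+x^2)^{-q}\,dx=+2q\int_\real x(z-x)^{-1}(1+x^2)^{-q-1}\,dx$, the prefactor in your post-integration-by-parts formula for $\widetilde{G}_{\mathbf{t}_q}'(z)$ should be $-\frac{2q}{B(\frac{1}{2},q-\frac{1}{2})}$ rather than $+\frac{2q}{B(\frac{1}{2},q-\frac{1}{2})}$; as written, your intermediate line yields $(2q-1)\left(z\widetilde{G}_{\mathbf{t}_{q+1}}(z)-1\right)$, which is the negative of the claimed conclusion, whereas with the corrected sign everything lines up and gives $(2q-1)\left(1-z\widetilde{G}_{\mathbf{t}_{q+1}}(z)\right)$ as desired.
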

\begin{proof}

(\ref{st1})\,\,\, Let $c_q:=\frac{1}{B(\frac{1}{2},q-\frac{1}{2})}$. 
From simple calculations,   
\[
\begin{split}
\widetilde{G}_{\mathbf{t}_q}(z) 
&= c_q\int_{\real} \frac{1+z^2 -2z(z-x) + (z-x)^2}{(1+x^2)^{q+1}} \frac{1}{z-x}dx \\
&=\frac{c_q}{c_{q+1}} \widetilde{G}_{\mathbf{t}_{q+1}}(z) (1+z^2) - 2z \frac{c_q}{c_{q+1}} + z \frac{c_q}{c_{q+1}} \\
&= \frac{c_q}{c_{q+1}}\left( (z^2+1)\widetilde{G}_{\mathbf{t}_{q+1}}(z)-z \right). 
\end{split}
\]
The conclusion then follows since $\frac{c_q}{c_{q+1}}=\frac{q-1/2}{q}$. 

(\ref{lem31})\,\,\, By integration by parts,  
\[
\begin{split}
\widetilde{G}_{\mathbf{t}_{q}}'(z) 
&= -c_q\int_{\real} \frac{1}{(1+x^2)^q} \frac{1}{(z-x)^2}dx = -c_q\int_{\real} \frac{2qx}{(1+x^2)^{q+1}} \frac{1}{z-x}dx \\
&=2qc_q\int_{\real} \frac{z-x-z}{(1+x^2)^{q+1}} \frac{1}{z-x}dx= 2q\frac{c_q}{c_{q+1}} \left( 1 - z\widetilde{G}_{\mathbf{t}_{q+1}}(z)\right). 
\end{split}
\]
The second equality follows from (\ref{st1}). 
\end{proof}

By using Lemma \ref{lem11}, condition (\ref{diffB}) can be proved as in Lemma \ref{lem3}. 

\begin{lem}\label{lem81}
The Cauchy transform $G_{\mathbf{t}_q}$ satisfies condition (\ref{diffB}) for any $q > \frac{1}{2}$ in the domain $\mathcal{D}^{st}$. 
\end{lem}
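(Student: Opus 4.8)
The plan is to mirror the argument of Lemma \ref{lem3}(2): assume that condition (\ref{diffB}) fails in $\mathcal{D}^{st}$, so that there is a point $z\in\mathcal{D}^{st}$ with $G_{\mathbf t_q}(z)\in\comp^-$ and $G_{\mathbf t_q}'(z)=0$ simultaneously, and derive a contradiction from the two equivalent forms of the differential identity in Lemma \ref{lem11}(\ref{lem31}).

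First I would extend both forms of that identity from $\comp^+$ to all of $\mathcal{D}^{st}$. The key preliminary observations are that $0,\pm i\notin\mathcal{D}^{st}$, so $1+z^2$ does not vanish on $\mathcal{D}^{st}$; that $\mathcal{D}^{st}$ is connected (it is $\comp^-\cup\mathbb{H}^+$ with a line segment deleted); and that the first quadrant $\comp^+\cap\mathbb{H}^+$ is a nonempty open subset of both $\comp^+$ and $\mathcal{D}^{st}$ on which the continued Cauchy transforms $G_{\mathbf t_q},G_{\mathbf t_{q+1}}$ of Proposition \ref{prop32} agree with the original ones. Hence, by the identity theorem,
$$
G_{\mathbf t_q}'(z)=(2q-1)\bigl(1-zG_{\mathbf t_{q+1}}(z)\bigr)=\frac{2q}{1+z^2}\left(\frac{q-\tfrac12}{q}-zG_{\mathbf t_q}(z)\right),\qquad z\in\mathcal{D}^{st}.
$$
At the hypothetical critical point $z$, the two equalities then give $zG_{\mathbf t_{q+1}}(z)=1$ and $G_{\mathbf t_q}(z)=\frac{2q-1}{2qz}$. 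Now I split into cases. If $\operatorname{Im}z>0$, then $z\in\comp^+\cap\mathbb{H}^+$, so $G_{\mathbf t_{q+1}}(z)$ is the genuine Cauchy transform and $F_{\mathbf t_{q+1}}(z)=z$; thus $\operatorname{Im}F_{\mathbf t_{q+1}}(z)=\operatorname{Im}z$ with $z\in\comp^+$, which by Proposition \ref{prop08}(\ref{p2}) forces $\mathbf t_{q+1}$ to be a Dirac mass --- absurd. If instead $\operatorname{Im}z\le 0$, then, since $\frac{2q-1}{2q}>0$ (as $q>\tfrac12$), $\operatorname{Im}G_{\mathbf t_q}(z)=\frac{2q-1}{2q}\operatorname{Im}\frac1z=-\frac{2q-1}{2q}\,\frac{\operatorname{Im}z}{|z|^2}\ge 0$, contradicting $G_{\mathbf t_q}(z)\in\comp^-$. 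Either way we reach a contradiction, so (\ref{diffB}) holds on $\mathcal{D}^{st}$.

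I do not expect any serious obstacle here: the technical content is just the domain bookkeeping for $\mathcal{D}^{st}$ and the check that the continuation of $G_{\mathbf t_{q+1}}$ matches the original Cauchy transform on the first quadrant before Proposition \ref{prop08}(\ref{p2}) is invoked. The one point worth emphasizing is that the contradiction must come from the \emph{recursive} form of the differential equation, i.e.\ from applying Proposition \ref{prop08}(\ref{p2}) to $\mathbf t_{q+1}$: applying it to $\mathbf t_q$ itself would only yield $\operatorname{Im}F_{\mathbf t_q}(z)=\frac{q}{q-1/2}\operatorname{Im}z>\operatorname{Im}z$, which is consistent with Proposition \ref{prop08}(\ref{p2}) and hence gives no contradiction. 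This is exactly the role played by Lemma \ref{lem11}(\ref{lem31}), in parallel with the use of $\bm{\beta}'_{p,q+1}$ in Lemma \ref{lem3}(2).
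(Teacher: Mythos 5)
Your proposal is correct and follows essentially the same route as the paper: both extend the two forms of the differential identity of Lemma \ref{lem11}(\ref{lem31}) to $\mathcal{D}^{st}$ by analytic continuation, derive $F_{\mathbf t_{q+1}}(z)=z$ to rule out $\operatorname{Im}z>0$ via Proposition \ref{prop08}(\ref{p2}), and use $G_{\mathbf t_q}(z)=\frac{q-1/2}{qz}\in\comp^+\cup\real$ to rule out $\operatorname{Im}z\le 0$. Your closing remark that the contradiction must be extracted from the recursive form (i.e., applied to $\mathbf t_{q+1}$ rather than $\mathbf t_q$) correctly identifies the same mechanism the paper uses.
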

\begin{proof}
Assume that $G_{\mathbf{t}_q}(z) \in \comp^-$, $G_{\mathbf{t}_q}'(z)=0$ and $z\in\mathcal{D}^{st}$. If $z\in\comp^+$, then $F_{\mathbf{t}_{q+1}}(z) = z$ from Lemma \ref{lem11}(\ref{lem31}), which contradicts Proposition \ref{prop08}(\ref{p2}).  If  $z \in \comp^- \cup \real$, then $G_{\mathbf{t}_q}(z) = \frac{q-1/2}{qz}$ from Lemma \ref{lem11}(\ref{lem31}), which contradicts the assumption  $G_{\mathbf{t}_q}(z) \in \comp^-$. 
\end{proof}

Moreover, the following property is required in (\ref{B}). 
\begin{lem}\label{lem129}
The Cauchy transform $G_{\mathbf{t}_q}$ extends to a univalent map around $i(-1,\infty)$ and it maps $i(-1,\infty)$ onto $i(-\infty,0)$. 
\end{lem}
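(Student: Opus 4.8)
The plan is to carry out, along the imaginary axis, the same kind of boundary analysis used for $\bm{\beta}'_{p,q}$ in Section~\ref{sec3}, now exploiting the symmetry of $\mathbf{t}_q$ and the recursive differential equation of Lemma~\ref{lem11}; write $c_q:=1/B(\tfrac12,q-\tfrac12)$ and $z=iy$. The first point is to fix the right analytic continuation of $G_{\mathbf{t}_q}$ near $i(-1,\infty)$. On $i(0,\infty)\subset\comp^+$ it is the ordinary Cauchy transform; near the rest of $i(-1,\infty)$ it is cleaner \emph{not} to invoke $\mathcal{D}^{st}$, whose slit is precisely $i[-1,0]$, but to observe that the density $c_q(1+x^2)^{-q}$ is real analytic on all of $\real$, so $G_{\mathbf{t}_q}$ continues analytically across $\real$; pushing the contour downward, which only requires avoiding the point $z=-i$ where $1+z^2=0$, yields an analytic continuation of $G_{\mathbf{t}_q}$ to $\comp\setminus i(-\infty,-1]$, represented on $\comp^-\setminus i(-\infty,-1]$ exactly as in Proposition~\ref{prop32} by $\widetilde{G}_{\mathbf{t}_q}(z)-2\pi i c_q(1+z^2)^{-q}$ with the branch of $(1+z^2)^{-q}$ that is positive on $\real$; this branch is single-valued there because $1+z^2$ avoids $(-\infty,0]$, and it agrees with the $\mathcal{D}^{st}$-branch on $\comp^+$ and on the fourth quadrant. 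In particular $G_{\mathbf{t}_q}$ is analytic on an open neighborhood of the half-line $i(-1,\infty)$.

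Next, symmetry gives $\widetilde{G}_{\mathbf{t}_q}(iy)=-iy\,c_q\int_{\real}\frac{dx}{(y^2+x^2)(1+x^2)^q}$, which is purely imaginary, while $(1+(iy)^2)^{-q}=(1-y^2)^{-q}>0$ for $|y|<1$; hence $G_{\mathbf{t}_q}(iy)\in i\real$ for every $y\in(-1,\infty)$, so $\varphi(y):=\tfrac1i\,G_{\mathbf{t}_q}(iy)$ is real and, being the restriction of an analytic function, continuous on $(-1,\infty)$. For $y>0$, $\varphi(y)=-y\,c_q\int_{\real}\frac{dx}{(y^2+x^2)(1+x^2)^q}<0$, with $\varphi(0)=-\pi c_q$ and $\varphi(y)\to0^-$ as $y\to+\infty$ (since $|G_{\mathbf{t}_q}(iy)|\le1/|y|$). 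For $-1<y<0$ the continuation formula reads $\varphi(y)=|y|\,c_q\int_{\real}\frac{dx}{(y^2+x^2)(1+x^2)^q}-\frac{2\pi c_q}{(1-y^2)^q}$, and since $(1+x^2)^{-q}\le1$ and $\int_{\real}\frac{dx}{y^2+x^2}=\pi/|y|$ the first term is $\le\pi c_q<2\pi c_q<\frac{2\pi c_q}{(1-y^2)^q}$, so $\varphi(y)<0$; moreover $\varphi(y)\to-\infty$ as $y\to-1^+$. Thus $G_{\mathbf{t}_q}(iy)\in i(-\infty,0)\subset\comp^-$ for every $y\in(-1,\infty)$.

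Since $G_{\mathbf{t}_q}(iy)\in\comp^-$ along the whole arc, $G'_{\mathbf{t}_q}(iy)\neq0$ there: arguing as in the proof of Lemma~\ref{lem81}, if $G'_{\mathbf{t}_q}(iy)=0$ then the differential equation of Lemma~\ref{lem11}(\ref{lem31}) gives both $F_{\mathbf{t}_{q+1}}(iy)=iy$, impossible by Proposition~\ref{prop08}(\ref{p2}) when $y>0$, and $G_{\mathbf{t}_q}(iy)=\tfrac{q-1/2}{q\,iy}$, which lies in $\comp^+$ when $y<0$, contradicting $G_{\mathbf{t}_q}(iy)\in\comp^-$; and $G'_{\mathbf{t}_q}(0)=2q-1\neq0$ outright. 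Hence $\varphi'(y)=G'_{\mathbf{t}_q}(iy)\neq0$ on $(-1,\infty)$, so the continuous function $\varphi$ is strictly monotone, and together with $\varphi(-1^+)=-\infty$ and $\varphi(+\infty)=0^-$ it is a strictly increasing bijection of $(-1,\infty)$ onto $(-\infty,0)$; equivalently $G_{\mathbf{t}_q}$ maps $i(-1,\infty)$ bijectively onto $i(-\infty,0)$. Finally, $G_{\mathbf{t}_q}$ is analytic near the arc $i(-1,\infty)$, is injective on it, and has nonvanishing derivative on it, and it is univalent near $\infty$ (e.g.\ by Proposition~\ref{prop08}(\ref{p4})), so the standard local-injectivity argument along a properly embedded arc, as in the proof of Proposition~\ref{prop91}, gives an open neighborhood of $i(-1,\infty)$ on which $G_{\mathbf{t}_q}$ is univalent. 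The genuinely delicate part is the setup in the first two steps: taking the continuation across $\real$ rather than fighting the $i[-1,0]$ slit of $\mathcal{D}^{st}$, and extracting the explicit formula on $i(-1,0)$ that makes the sign of $\varphi$ come down to the elementary estimate above; everything after that is routine given Lemmas~\ref{lem11} and~\ref{lem81} and Proposition~\ref{prop32}.
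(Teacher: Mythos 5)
Your proof is correct, but the mechanism by which you obtain injectivity along the arc is genuinely different from the paper's. The paper proves strict monotonicity of $\varphi(y)=\frac{1}{i}G_{\mathbf{t}_q}(iy)$ head-on: Lemma~\ref{lem11}(\ref{lem31}) gives $\widetilde{G}_{\mathbf{t}_q}'(iy)=(2q-1)\int_{\real}\frac{x^2}{x^2+y^2}\,\mathbf{t}_{q+1}(dx)>0$ for $y\neq0$, and for $y\in(-1,0)$ the derivative of the correction term $-\frac{2\pi i}{B(\frac12,q-\frac12)}(1+z^2)^{-q}$ from Proposition~\ref{prop32} equals $-\frac{4\pi q y}{B(\frac12,q-\frac12)}(1-y^2)^{-q-1}>0$, so $G_{\mathbf{t}_q}'(iy)>0$ outright; the boundary limits at $-1$ and $+\infty$ then finish the argument. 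You instead first establish the pointwise sign statement $\varphi(y)<0$ on all of $(-1,\infty)$ via the elementary bound $|y|\int_{\real}\frac{(1+x^2)^{-q}}{y^2+x^2}\,dx\le\pi<2\pi(1-y^2)^{-q}$, and then recycle the argument of Lemma~\ref{lem81} (if $G_{\mathbf{t}_q}(iy)\in\comp^-$ and $G_{\mathbf{t}_q}'(iy)=0$, the differential equation forces either $F_{\mathbf{t}_{q+1}}(iy)=iy$ or $G_{\mathbf{t}_q}(iy)=\frac{q-1/2}{q\,iy}\in\comp^+$, both impossible) to get $\varphi'\neq0$, hence strict monotonicity. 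Both routes rest on the same recursive differential equation; yours costs one extra integral estimate but buys the explicit negativity of $\frac{1}{i}G_{\mathbf{t}_q}$ on the whole arc and avoids differentiating the branch term. Your opening discussion of how to continue $G_{\mathbf{t}_q}$ analytically across $\real$ to a genuine neighborhood of $i(-1,0)$ — rather than working in $\mathcal{D}^{st}$, whose slit is exactly $i[-1,0]$ — addresses a point the paper leaves implicit and is a worthwhile clarification.
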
 
\begin{proof}
The differential equation of Lemma \ref{lem11}(\ref{lem31}) implies $\widetilde{G}_{\mathbf{t}_q}'(iy)= (2q-1)\int_{\real}\frac{x^2}{x^2+y^2}\mathbf{t}_{q+1}(dx)>0$ for $y \neq 0$ and $G'_{\mathbf{t}_q}(0)=\lim_{y \searrow 0}\widetilde{G}_{\mathbf{t}_q}'(iy) =2q-1 >0$. From Proposition \ref{prop32}, one has
$$
G_{\mathbf{t}_q}'(iy)= 
\begin{cases}
\widetilde{G}_{\mathbf{t}_q}'(iy) >0,& y \geq 0, \\[10pt]
\widetilde{G}_{\mathbf{t}_q}'(iy) -\dfrac{4\pi q y}{B(\frac{1}{2}, q-\frac{1}{2})}\left(1-y^2\right)^{-q-1} >0,& y\in(-1,0),   
\end{cases}
$$
and so $G_{\mathbf{t}_q}$ extends to a univalent map around $i(-1,\infty)$. Since $\lim_{y \searrow -1}\frac{1}{i}G_{\mathbf{t}_q}(iy)=-\infty$ from Proposition \ref{prop32} and $\lim_{y \to \infty}G_{\mathbf{t}_q}(iy)=0$, 
$G_{\mathbf{t}_q}$ maps $i(-1,\infty)$ onto $i(-\infty,0).$
\end{proof}
\begin{rem}[Simple proof of the free infinite divisibility of Gaussian]\label{gauss}
If $q=n\in\{1,2,3,\ldots\}$, then the Cauchy transform is a rational function: for $n=1$, the measure is a Cauchy distribution and 
$G_{\mathbf{t}_1}(z)=\frac{1}{z+i}$, and then from Lemma \ref{lem11}(\ref{st1}) one can recursively show the claim. 
One can show condition (\ref{diffA}) for the domain $\comp$ similarly to the proof of Lemma \ref{lem81}. 
Since the Cauchy transform is rational and $\lim_{y\to\infty}iyG_{\mathbf{t}_n}(iy)=1$, condition (\ref{barrierA}) is easily verified: if $z_k\in\comp$ converge to infinity, then $G_{\mathbf{t}_n}(z_k) \to 0 \in \real$ as $k \to \infty$. Therefore $G_{\mathbf{t}_n}$ satisfies (\ref{A}) and hence $\mathbf{t}_n$ is FID. After taking a limit $n\to\infty$ with some scaling, we have that the Gaussian is also FID. For general $q$, we need much more computation to verify condition (\ref{barrierB}), which is shown in Theorem \ref{thm31} below.
\end{rem}

Now we are going to complete the proof of Theorem \ref{thm1}. The construction of curves as in beta and beta prime distributions is now difficult, and instead we more directly apply Proposition \ref{prop91}(2).  We utilize two domains $\mathbb{H}^+$ and $\mathcal{D}^{st}$ to get a better result, but still the result does not cover some exceptional parameters $q$. 

\begin{thm}\label{thm31}
\begin{enumerate}[\rm(1)]
\item\label{thm31(2)} The t-distribution $\mathbf{t}_q$ is in the class $\mathcal{UI}$ provided $q \in (\frac{1}{2},2] \cup \bigcup_{n=1}^\infty [2n + \frac{1}{2}, 2n+2]$.
 
\item The t-distribution $\mathbf{t}_q$ is in the class $\mathcal{UI}_s$ provided $q \in (\frac{1}{2},1] \cup \bigcup_{n=1}^\infty [n + \frac{1}{4}, n+1]$. 
\end{enumerate}
\end{thm}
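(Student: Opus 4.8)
The plan is to verify the structural hypotheses (A) and (B) from Section~\ref{sec22} for the analytically continued Cauchy transform $G_{\mathbf t_q}$ on the two candidate domains $\mathbb H^+$ and $\mathcal D^{st}$, and then invoke Proposition~\ref{prop91}. By Lemma~\ref{lem81} condition (\ref{diffB}) (equivalently (\ref{diffA}) restricted to these domains) already holds for every $q>\frac12$, and by Lemma~\ref{lem129} the first part of hypothesis (B) — that $G_{\mathbf t_q}$ is univalent along $i(-1,\infty)$ and maps it onto $i(-\infty,0)$ — also holds for every $q>\frac12$. Likewise Proposition~\ref{prop32} supplies (\ref{merA})/(\ref{merB}): $G_{\mathbf t_q}$ extends meromorphically (in fact analytically, since $(1+z^2)^{-q}$ has no poles off the slit) to $\mathcal D^{st}=(\comp^-\cup\mathbb H^+)\setminus i[-1,0]$, and the symmetry Proposition~\ref{prop08}(\ref{p5}) lets us work only on $\mathbb H^+$. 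So the entire content of the theorem is the \emph{boundary/barrier} condition (\ref{barrierA})/(\ref{barrierB}): one must show that whenever a sequence in the chosen domain approaches the boundary or $\infty$, the limit of $G_{\mathbf t_q}$ lies in the ``good'' target set ($\comp^+\cup\real\cup\{\infty\}$ for $\mathbb H^+$, or $\overline{\mathbb H^-\cup\comp^+}\cup\{\infty\}$ for $\mathcal D^{st}$).

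First I would use the domain $\mathcal D=\mathbb H^+$ (so that Proposition~\ref{prop91}(2) gives the stronger conclusion $\mathbf t_q\in\mathcal{UI}$, yielding part (1)). The boundary of $\mathbb H^+$ is the imaginary axis $i\real$, split by the slit $i[-1,0]$ into $i(0,\infty)$, the two sides of $i(-1,0)$, and $i(-\infty,-1]$. On $i(0,\infty)$ the function is $\widetilde G_{\mathbf t_q}(iy)\in i(-\infty,0)$, on the two slit-sides it is $\widetilde G_{\mathbf t_q}(iy)\mp \frac{2\pi i}{B(1/2,q-1/2)}(1-y^2)^{-q}$ which is purely imaginary, and as $z\to i(-\infty,-1]$ or $z\to\infty$ in $\mathbb H^+$ one checks $G_{\mathbf t_q}\to 0$ or a real value, using Proposition~\ref{prop32} together with the explicit $(1+z^2)^{-q}$ branch. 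The crucial point is that the \emph{imaginary part of the slit correction}, $\pm\frac{2\pi}{B(1/2,q-1/2)}(1-y^2)^{-q}$, must have the right sign so that the limiting value lands in $\comp^+\cup\real$; this forces a constraint on $q$ through the branch of $(1+z^2)^{-q}$ on the slit, and this is exactly where the parameter restriction $q\in(\frac12,2]\cup\bigcup_n[2n+\frac12,2n+2]$ enters. The main work is the careful branch bookkeeping: writing $1+z^2=(z-i)(z+i)$ and tracking $\arg(z-i)+\arg(z+i)$ as $z$ crosses onto each side of the slit, one gets a factor $e^{\pm i\pi q}$ (more precisely $e^{\pm i\pi(-q)}$ type phases), and the sign of the resulting imaginary part is governed by $\cos\pi q$ or $\sin\pi q$, hence by $\lfloor 2q\rfloor \bmod 2$ and the fractional part of $q$ — producing the stated union of intervals.

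For the remaining parameters $q\in[n+\frac14,n+1]$ with $n$ odd (which part (1) misses but part (2) covers) I would switch to the larger domain $\mathcal D=\mathcal D^{st}\subset\comp^-\cup\mathbb H^+$, which is \emph{not} a subset of $\mathbb H^+$, so Proposition~\ref{prop91}(2) only delivers $\mathbf t_q\in\mathcal{UI}_s$. Here condition (\ref{barrierB}) is weaker — the limit need only lie in $\overline{\mathbb H^-\cup\comp^+}=\{\,w:\operatorname{Re}w\ge 0\}\cup\comp^+$ — so a wider range of $q$ survives the sign analysis. The boundary of $\mathcal D^{st}$ consists of the two sides of the slit $i[-1,0]$ and the point/neighbourhood at $\infty$; on the slit sides $G_{\mathbf t_q}$ is again (a branch of) $\widetilde G_{\mathbf t_q}(iy)\mp\frac{2\pi i}{B(1/2,q-1/2)}(1-y^2)^{-q}$ and at $\infty$ it tends to $0$, so one repeats the branch computation but now only needs $\operatorname{Re}$ of the boundary value to be nonnegative, which is the less stringent requirement giving $q\in(\frac12,1]\cup\bigcup_n[n+\frac14,n+1]$.

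The step I expect to be the main obstacle is precisely this branch/sign analysis of $(1+z^2)^{-q}$ on the two sides of the imaginary slit $i[-1,0]$, combined with showing that for $q$ \emph{outside} the stated intervals the barrier genuinely fails — i.e.\ that the method cannot do better — which is reflected in the fact that the theorem does not claim an ``only if''. A secondary technical nuisance is handling non-integer, non-half-integer $q$ uniformly: as in Theorems~\ref{thm11} and~\ref{thm12}, I would first prove the result for $q$ avoiding the half-integers and then recover the endpoints by the closedness of $\mathcal{UI}$ and $\mathcal{UI}_s$ under weak limits (using Lemma~\ref{lem112} and the continuity of $\mathbf t_q$ in $q$), since the closed intervals $[2n+\frac12,2n+2]$ and $[n+\frac14,n+1]$ include such boundary values.
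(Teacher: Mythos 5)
Your overall strategy is the paper's: take $\mathcal{D}=\mathbb{H}^+$ for part (1) and $\mathcal{D}=\mathcal{D}^{st}$ for part (2), import (\ref{merB}), (\ref{diffB}) and the univalence along $i(-1,\infty)$ from Proposition \ref{prop32} and Lemmas \ref{lem81}, \ref{lem129}, and reduce everything to the barrier condition, which is settled by tracking the branch of $(1+z^2)^{-q}$ across the boundary; the phases $e^{i\pi q}$ (for $\mathbb{H}^+$) and $e^{2\pi i q}$ (for $\mathcal{D}^{st}$) are exactly what produce the stated parameter intervals. This matches the paper's proof in structure and in every cited ingredient.

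There is, however, one concrete error that would make your verification of part (1) fail on part of the claimed range. Since $\comp^+\not\subset\mathbb{H}^+$, the domain $\mathbb{H}^+$ can only be used with condition (\ref{B}), whose barrier target is $\overline{\mathbb{H}^-\cup\comp^+}\cup\{\infty\}=\{\mathrm{Re}\,w\le 0\}\cup\{\mathrm{Im}\,w\ge 0\}\cup\{\infty\}$ --- not $\comp^+\cup\real\cup\{\infty\}$, which is the condition-(\ref{A}) target you assign to it. The distinction matters: for $q\in(\tfrac32,2]$ (and for $[2n+\tfrac32,2n+2]$) the boundary value $G_{\mathbf{t}_q}(+0+iy)$ for $y<-1$ has $\mathrm{Re}=\frac{2\pi\sin(\pi q)}{B(\frac12,q-\frac12)(y^2-1)^q}\le 0$ but $\mathrm{Im}=\mathrm{Im}\,\widetilde G_{\mathbf{t}_q}(iy)-\frac{2\pi\cos(\pi q)}{B(\frac12,q-\frac12)(y^2-1)^q}\to-\infty$ as $y\to-1^-$, so the limit lies in $\comp^-\cap\overline{\mathbb{H}^-}$; it satisfies (\ref{barrierB}) but not your stated target, and with your target you could not cover $q\in(\tfrac32,2]$. (You make the companion slip of writing $\overline{\mathbb{H}^-\cup\comp^+}$ as $\{\mathrm{Re}\,w\ge0\}\cup\comp^+$; recall $\mathbb{H}^-$ is the \emph{left} half-plane.) Once the target is corrected to ``$\mathrm{Re}\le 0$ or $\mathrm{Im}\ge 0$'' for both domains, your sign analysis goes through and reproduces the paper's sets $\mathcal{B}_1\cap\mathcal{B}_2$ etc. A minor further remark: the closing weak-limit argument for half-integer $q$ is unnecessary here --- unlike Theorems \ref{thm11} and \ref{thm12}, all the formulas and sign inequalities are valid for every $q>\tfrac12$ directly, including the interval endpoints.
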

\begin{proof}
(1)\,\,\, Consider $\mathbb{H}^+$ as a domain required in condition (\ref{B}). Conditions (\ref{merB}) and (\ref{diffB}) are respectively shown in Proposition \ref{prop32} and Lemma \ref{lem81}, and the first part of condition ({\ref{B}}) is proved in Lemma \ref{lem129}. To show the remaining condition (\ref{barrierB}),  
the following values have to belong to $\overline{\mathbb{H}^- \cup \comp^+}  \cup \{\infty\}$: 
\begin{align}
&G_{\mathbf{t}_{q}}(+0+iy),~~ y \in \real \setminus \{-1\}, \label{eq70}\\
&\lim_{z \to -i, z \in \mathbb{H}^+} G_{\mathbf{t}_q}(z), \label{eq71} \\ 
& \lim_{z \to \infty, z\in\mathbb{H}^+}G_{\mathbf{t}_{q}}(z). \label{eq72} 
\end{align}

(\ref{eq71})\,\,\, It is easy to see $\lim_{z \to -i, z \in \mathbb{H}^+} G_{\mathbf{t}_q}(z)=\infty$ from Proposition \ref{prop32}.

 (\ref{eq72})\,\,\, $G_{\mathbf{t}_q}(z)$ can be written as $\int_{\real- \frac{i}{2}}\frac{1}{z-x} \mathbf{t}_q(dx)$, and so $\lim_{z\to \infty, z\in \comp^+}G_{\mathbf{t}_q}(z) =0$. From Proposition \ref{prop32}, we also deduce that  $\lim_{z\to \infty, z\in \mathbb{H}^+}G_{\mathbf{t}_q}(z) =0$.

(\ref{eq70})\,\,\, Lemma \ref{lem129} implies that $G_{\mathbf{t}_{q}}(+0+iy) \in i \real$ for $y>-1$.  The same proposition enables us to calculate the boundary values for $y<-1$ as follows: 
\[
\begin{split}
&\text{Re}\,G_{\mathbf{t}_q}(+0+iy)=-\text{Re}\left(\frac{2\pi i}{B(\frac{1}{2}, q-\frac{1}{2})} (y^2-1)^{-q}e^{\pi q i}\right)=\frac{2\pi \sin(\pi q)}{B(\frac{1}{2}, q-\frac{1}{2}) (y^2-1)^{q}},   \\
&\text{Im}\,G_{\mathbf{t}_q}(+0+iy)=\text{Im}\, \widetilde{G}_{\mathbf{t}_q}(iy) - \text{Im}\left(\frac{2\pi i}{B(\frac{1}{2}, q-\frac{1}{2})} (y^2-1)^{-q}e^{\pi q i}\right)\\
&~~~~~~~~~~~~~~~~~~~~~=\text{Im}\, \widetilde{G}_{\mathbf{t}_q}(iy) - \frac{2\pi \cos(\pi q)}{B(\frac{1}{2}, q-\frac{1}{2}) (y^2-1)^{q}}.  
\end{split}
\]
Note that $\widetilde{G}_{\mathbf{t}_q}(z) \in \comp^+$ for $z \in \comp^-$. 
 The condition $q\in(\frac{1}{2},2] \cup \bigcup_{n=1}^\infty [2n + \frac{1}{2}, 2n+2]$ guarantees the inequalities $\text{Re}\,G_{\mathbf{t}_q}(+0+iy) \leq 0$ or $\text{Im}\,G_{\mathbf{t}_q}(+0+iy) \geq 0$ in $(-\infty,-1)$.

 (2)\,\,\, We are going to show condition (\ref{B}) for $\mathcal{D}^{st}$. 
The most important condition is (\ref{barrierB}); the others  can be shown similarly to the case (1). We show that the limiting values 
\begin{align}
&G_{\mathbf{t}_q}(x- i0),~x\in(-\infty,0],\label{eq73}\\
& G_{\mathbf{t}_q}(\pm 0+iy),~~y\in(-1,0), \label{eq74}\\
& \lim_{z \to -i, z\in\mathcal{D}^{st}}G_{\mathbf{t}_{q}}(z), \label{eq75} \\
& \lim_{z \to \infty, z\in\mathcal{D}^{st}}G_{\mathbf{t}_{q}}(z) \label{eq76} 
\end{align}
are all in $\overline{\mathbb{H}^- \cup \comp^+}  \cup \{\infty\}$. 

 (\ref{eq75}) and (\ref{eq76}) are computed as in the case (1) and they belong to  $\overline{\mathbb{H}^- \cup \comp^+}  \cup \{\infty\}$. 

(\ref{eq73})\,\,\, For $x\leq0$, the following computation holds: 
\begin{equation*}
\begin{split}
&\text{Re}\, G_{\mathbf{t}_q}(x-i0) = \text{Re}\, \widetilde{G}_{\mathbf{t}_q}(x-i0)- \text{Re}\left(\frac{2\pi i}{B(\frac{1}{2}, q-\frac{1}{2})} (1+x^2)^{-q}e^{2\pi q i}\right)\\
&~~~~~~~~~~~~~~~~~~~= \text{Re}\, \widetilde{G}_{\mathbf{t}_q}(x-i0)+ \frac{2\pi \sin(2\pi q)}{B(\frac{1}{2}, q-\frac{1}{2}) (1+x^2)^{q}},\\
&\text{Im}\, G_{\mathbf{t}_q}(x-i0)
=  \text{Im}\, \widetilde{G}_{\mathbf{t}_q}(x-i0)- \text{Im}\left(\frac{2\pi i}{B(\frac{1}{2}, q-\frac{1}{2})} (1+x^2)^{-q}e^{2\pi q i}\right)\\
&~~~~~~~~~~~~~~~~~~~=  \frac{\pi}{B(\frac{1}{2}, q-\frac{1}{2})} (1+x^2)^{-q}   - \frac{2\pi \cos(2\pi q)}{B(\frac{1}{2}, q-\frac{1}{2})} (1+x^2)^{-q}\\
&~~~~~~~~~~~~~~~~~~~=  \frac{\pi (3-4\cos^2(\pi q))}{B(\frac{1}{2}, q-\frac{1}{2}) (1+x^2)^{q}}, 
\end{split}
\end{equation*}
where the Stieltjes inversion formula was used to calculate $\text{Im}\, \widetilde{G}_{\mathbf{t}_q}(x-i0)$. Note that $\text{Re}\, \widetilde{G}_{\mathbf{t}_q}(x-i0) \leq 0$; see Lemma \ref{lem03}.  We need the inequality $\text{Re}\, G_{\mathbf{t}_q}(x-i0) \leq 0$ or $\text{Im}\, G_{\mathbf{t}_q}(x-i0) \geq 0$ on $(-\infty,0)$, which is true if $q \in \mathcal{B}_1:=(\frac{1}{2}, 1] \cup [\frac{7}{6}, 2] \cup [\frac{13}{6}, 3] \cup [\frac{19}{6}, 4] \cup \cdots$. 

(\ref{eq74})\,\,\, As in the case (1), one easily see that $G_{\mathbf{t}_q}(+0+iy) \in i\real$ for $y \in (-1,0]$. 
The other limit is computed as follows: for $-1<y<0$, 
\begin{equation*}
\begin{split}
&\text{Re}\, G_{\mathbf{t}_q}(-0+iy) =  -\text{Re}\left(\frac{2\pi i}{B(\frac{1}{2}, q-\frac{1}{2})} (1-y^2)^{-q}e^{2\pi q i}\right)\\
&~~~~~~~~~~~~~~~~~~~~~=  \frac{2\pi \sin(2\pi q)}{B(\frac{1}{2}, q-\frac{1}{2}) (1-y^2)^{q}},\\
&\text{Im}\, G_{\mathbf{t}_q}(-0+iy)
=  \text{Im}\, \widetilde{G}_{\mathbf{t}_q}(iy)- \text{Im}\left(\frac{2\pi i}{B(\frac{1}{2}, q-\frac{1}{2})} (1-y^2)^{-q}e^{2\pi q i}\right)\\
&~~~~~~~~~~~~~~~~~~~~~=\text{Im}\, \widetilde{G}_{\mathbf{t}_q}(iy)  - \frac{2\pi \cos(2\pi q)}{B(\frac{1}{2}, q-\frac{1}{2}) (1-y^2)^{q}}.
\end{split}
\end{equation*}
We need $\text{Re}\, G_{\mathbf{t}_q}(-0+iy) \leq 0$ or $\text{Im}\, G_{\mathbf{t}_q}(-0+iy) \geq 0$ for $-1<y <0$, which is true if  $q \in \mathcal{B}_2:= (\frac{1}{2}, 1] \cup [\frac{5}{4}, 2] \cup [\frac{9}{4}, 3] \cup [\frac{13}{4}, 4] \cup \cdots$. Consequently, condition (\ref{barrierB}) holds provided $q \in \mathcal{B}_1 \cap \mathcal{B}_2 = (\frac{1}{2},1] \cup \bigcup_{n=1}^\infty [n + \frac{1}{4}, n+1]$.
 \end{proof}

Looking at  the component of $\{z\in\comp: G_{\mathbf{t}_q}(z) \in \comp^- \}$ containing $\comp^+$ drawn by Mathematica, the following conjecture is likely to hold. 
\begin{conj}
The t-distribution $\mathbf{t}_q$ is FID (and more strongly in class $\mathcal{UI}$) for any $q>\frac{1}{2}$. 
\end{conj}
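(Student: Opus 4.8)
The plan is to keep the complex-analytic apparatus of Proposition~\ref{prop91} but to replace the two ``off-the-shelf'' domains $\mathbb{H}^+$ and $\mathcal{D}^{st}$ used in Theorem~\ref{thm31} --- on which the barrier condition~(\ref{barrierB}) is what forces the restrictions on $q$ --- by a domain tailor-made to $q$, constructed as in the proofs of Theorems~\ref{thm11} and~\ref{thm12} for beta and beta prime distributions. Concretely, I would try to produce a Jordan curve $C$, living on the Riemann surface of the analytic continuation of $G_{\mathbf{t}_q}$ and passing through $z=\infty$ and through the branch points $z=\pm i$, along which $G_{\mathbf{t}_q}$ is \emph{real}-valued and on which it is injective, such that $C$ bounds a domain $\mathcal{E}$ with $\comp^+\subset\mathcal{E}$. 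Then $\mathcal{E}$ satisfies condition~(\ref{A}) --- the barrier condition~(\ref{barrierA}) holding simply because $G_{\mathbf{t}_q}(C)\subset\real\cup\{\infty\}$ --- and Proposition~\ref{prop91}(1) gives $\mathbf{t}_q\in\mathcal{UI}$. As usual, by weak closedness of $\mathcal{UI}$ one may assume $q$ is neither an integer nor a half-integer.

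The steps would be: (i) Extend $G_{\mathbf{t}_q}$ analytically beyond $\mathcal{D}^{st}$. For non-integer $q$ the factor $(1+z^2)^{-q}$ in Proposition~\ref{prop32} produces branch points at $z=\pm i$, over each of which the surface is helix-like (as is the surface $\mathcal{S}$ in Theorem~\ref{thm90}), while the hypergeometric factor branches over $z=\infty$; iterating the residue computation behind Proposition~\ref{prop32} yields explicit formulas for $G_{\mathbf{t}_q}$ on the successive sheets, exactly as~(\ref{eq090}) did for $\rho'$. (ii) Verify conditions~(\ref{diffC}) and~(\ref{diffA}) on the new domain: $G_{\mathbf{t}_q}'(z)\neq 0$ whenever $G_{\mathbf{t}_q}(z)\in\real$ (resp.\ $\in\comp^-$). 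This is the argument of Lemma~\ref{lem81}: by the differential equation of Lemma~\ref{lem11}, $G_{\mathbf{t}_q}'(z)=0$ forces $F_{\mathbf{t}_{q+1}}(z)=z$ for $z\in\comp^+$, contradicting Proposition~\ref{prop08}(\ref{p2}), or else $G_{\mathbf{t}_q}(z)=\tfrac{q-1/2}{qz}$ for $z\notin\comp^+$, which lies in $\comp^+$ for $z\in\comp^-$ and in $\real$ for $z\in\real$; combined with the fact (from Stieltjes inversion, the density being everywhere positive) that $G_{\mathbf{t}_q}(x-i0)\in\comp^-$ for real $x$, this excludes all cases. (iii) Pin down the local behaviour of $G_{\mathbf{t}_q}$ at $z=\pm i$, at $z=0$ and at $\infty$, by applying the connection formulas~(\ref{15.3.7}) and~(\ref{15.3.9}) to $G_{\mathbf{t}_q}(z)=\tfrac{q-1/2}{qz}F(1,\tfrac12;1+q;1+\tfrac1{z^2})$ and to the formula of Proposition~\ref{prop32}, obtaining asymptotics of the same flavour as~(\ref{master}). (iv) Starting from the germs of $\{z:\,G_{\mathbf{t}_q}(z)\in\real\}$ issuing from $z=\pm i$ and from $z=\infty$, prolong these curves using the non-vanishing of $G_{\mathbf{t}_q}'$, and via a maximality-plus-barrier argument as in Theorems~\ref{thm11},~\ref{thm12} (using (iii) to control the prolongation, and the injectivity of $G_{\mathbf{t}_q}$ along each curve together with $G_{\mathbf{t}_q}(x-i0)\notin\real$ to forbid reaching $\real$) assemble them into the required $C$; Proposition~\ref{prop91}(1) then finishes.

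The main obstacle will be controlling the global shape of the level set $\{z:\,G_{\mathbf{t}_q}(z)\in\real\}$ on this multi-sheeted surface. In the beta and beta prime cases a single dominant singular term $c(-z)^{\alpha-1}$ near each branch point essentially dictated how many curve-germs emerge and in which directions, so the curves were all but forced; here, near $z=\pm i$ the singular part $\propto(1+z^2)^{-q}$ competes with the regular part $\widetilde{G}_{\mathbf{t}_q}$, whose value at $\pm i$ is a nonzero constant depending on $q$, so the number and directions of the germs --- hence the very topology of $C$ --- vary with $q$ precisely over the windows left open by Theorem~\ref{thm31}. One must also exclude a level curve escaping to $\infty$ on a ``wrong'' sheet: this is exactly the failure of~(\ref{barrierB}) for $\mathbb{H}^+$ and $\mathcal{D}^{st}$, and ruling it out globally seems to require understanding the boundary values on the far sheets in a way not reducible to the signs of $\sin\pi q$, $\cos\pi q$ and $\cos2\pi q$ that sufficed in Theorem~\ref{thm31}. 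A tempting shortcut --- deducing the conjecture from Theorem~\ref{thm12} through $G_{\mathbf{t}_q}(z)=z\,\widetilde{G}_{\bm{\beta}'_{1/2,q-1/2}}(z^2)$ together with $\bm{\beta}'_{1/2,q-1/2}\in\mathcal{UI}$ for every $q>\tfrac12$ --- does not obviously work, since no ``symmetric square root preserves $\mathcal{UI}$'' principle is at hand in this generality; and the fine dependence on a continuous parameter here is of the same kind that, as the author remarks, blocks extending Theorem~\ref{thm90} to all $\alpha\in(\tfrac12,\tfrac32)$.
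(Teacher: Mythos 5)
The statement you are asked to prove is posed in the paper as a \emph{Conjecture}: the author does not prove it, and the partial result actually established (Theorem \ref{thm31}) covers only $q \in (\frac{1}{2},2]\cup\bigcup_n[2n+\frac14,2n+2]$ (resp.\ $(\frac12,1]\cup\bigcup_n[n+\frac14,n+1]$ for $\mathcal{UI}_s$). Your proposal is therefore not being measured against a proof in the paper, and, as you yourself concede in the final paragraph, it is not a proof either: it is a program. Steps (i)--(ii) are sound and essentially already in the paper (Proposition \ref{prop32}, Lemmas \ref{lem11}, \ref{lem81}, \ref{lem129} give the continuation, the differential equation, condition (\ref{diffB}) and the behaviour on $i(-1,\infty)$). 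The entire difficulty is concentrated in your steps (iii)--(iv): showing that the level set $\{z: G_{\mathbf{t}_q}(z)\in\real\}$ on the multi-sheeted continuation actually assembles into a Jordan curve $C$ through $\pm i$ and $\infty$ bounding a domain containing $\comp^+$, with no component of the level set escaping to $\infty$ on a far sheet or returning to $\real$. You state this obstacle accurately but do not overcome it; in particular you give no substitute for the barrier condition (\ref{barrierB}), whose verification via the signs of $\sin\pi q$, $\cos\pi q$, $\cos 2\pi q$ is exactly what produces the excluded windows of $q$ in Theorem \ref{thm31}. A claim of the form ``by a maximality-plus-barrier argument as in Theorems \ref{thm11}, \ref{thm12}'' does not transfer, because in the beta and beta prime cases the domain is a fixed planar slit domain with two branch points at which a single explicit singular term ((\ref{eq124}), (\ref{eq25})) dictates the germs of the level curves, whereas here the endpoint behaviour on the successive sheets is not computed.

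Two smaller points. First, near $z=\pm i$ the term $(1+z^2)^{-q}$ blows up for every $q>\frac12$, so it dominates the bounded function $\widetilde{G}_{\mathbf{t}_q}$ there; the genuine competition between the two terms occurs along the boundary rays on the far sheets (this is where the sign conditions in the proof of Theorem \ref{thm31} come from), not at the branch points themselves. Second, your remark that the shortcut through $G_{\mathbf{t}_q}(z)=z\widetilde{G}_{\bm{\beta}'_{1/2,q-1/2}}(z^2)$ and Theorem \ref{thm12} does not obviously work is correct and worth keeping: no ``symmetric square root preserves $\mathcal{UI}$'' principle is available, and indeed the paper introduces the weaker class $\mathcal{UI}_s$ precisely to partially compensate for this. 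In summary: the approach is the right one and matches the paper's own partial attack, but the conjecture remains open and your proposal does not close it.
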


\section{The free divisibility indicator of symmetric FID distributions}\label{fdi}
A family of maps $\{\mathbb{B}_t \}_{t \geq 0}$ is defined  on the set of Borel probability measures $\mathcal{P}$ \cite{BN08}:  
\[
\mathbb{B}_t(\mu) := \Big(\mu^{\boxplus (1+t)}\Big) ^{\uplus\frac{1}{1+t}},~~t\geq0, ~~\mu\in \mathcal{P}, 
\]
where $\uplus$ is \emph{Boolean convolution} \cite{SW97} and the probability measure $\mu^{\uplus t}$ ($t\geq 0$) is defined by $F_{\mu^{\uplus t}}(z) = (1-t)z + tF_\mu(z)$. 
These maps become a flow: $\mathbb{B}_{t+s} = \mathbb{B}_t \circ \mathbb{B}_s$ for $s,t \geq 0$. 
The \emph{free divisibility indicator} $\phi(\mu) \in [0,\infty]$ is defined by 
\[
\phi(\mu):=\sup \{t \geq 0: \mu \in \mathbb{B}_t(\mathcal{P}) \}. 
\]
A probability measure $\mu$ is FID if and only if $\phi(\mu) \geq 1$ \cite{BN08}. The following property is known \cite{AHc}: 
$$ 
\phi(\mu^{\uplus t})=\frac{\phi(\mu)}{t},~t>0,~~~\phi(\mu) = \sup\{ t \geq0: \mu^{\uplus t} \text{~is~FID}\}.
$$ 
Hence, when $\phi(\mu) < \infty$, $\mu^{\uplus t}$ is FID  for small $t>0$, and the free divisibility indicator measures the time when the Boolean time evolution breaks the free infinite divisibility. 

We will give a method for calculating the quantity $\phi(\mu)$.

\begin{lem}\label{prop323}
Let $\mu$ be a symmetric FID distribution satisfying one of the following conditions:
\begin{enumerate}[\rm(1)]
\item\label{fdi1} $F_\mu$ extends to a univalent function around $i\real$ and it maps $i\real$ onto $i(0,\infty)$; 
\item\label{fdi2} $F_\mu$ extends to a univalent function around $i(c,\infty)$  for some $c\in(-\infty,0]$ and it maps $i(c,\infty)$ onto $i(0,\infty)$, and moreover $F_\mu'(ic+i0)=0$. 
\end{enumerate}
Then the free divisibility indicator of $\mu$ is 1. 
\end{lem}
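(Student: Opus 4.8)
The plan is to combine the known inequality $\phi(\mu)\ge 1$ (valid because $\mu$ is FID, by $\phi(\nu)\ge 1\Leftrightarrow\nu$ FID recalled above) with a proof that $\phi(\mu)\le 1$, i.e.\ that $\mu\notin\mathbb B_t(\mathcal P)$ for every $t>1$. So fix $t>1$ and suppose, towards a contradiction, that $\mu=\mathbb B_t(\nu)$ for some $\nu\in\mathcal P$. Applying $\uplus(1+t)$ to $\mu=(\nu^{\boxplus(1+t)})^{\uplus\frac1{1+t}}$ gives $\mu^{\uplus(1+t)}=\nu^{\boxplus(1+t)}$; since $F_{\mu^{\uplus(1+t)}}=-t\,\mathrm{Id}+(1+t)F_\mu$ and $F_{\mu^{\uplus(1+t)}}^{-1}=\mathrm{Id}+(1+t)\phi_\nu$ while $F_\mu^{-1}=\mathrm{Id}+\phi_\mu$, a short manipulation of these transforms yields the identity $\Psi\circ\omega=\mathrm{Id}$ (near $\infty$, hence on the subdomain of $\comp^+$ reached by analytic continuation), where $\Psi:=\mathrm{Id}-(t-1)\phi_\mu$ and $\omega:=t\,\mathrm{Id}-(t-1)F_\nu$. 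Here $\phi_\mu$, and therefore $\Psi$, is analytic on all of $\comp^+$ because $\mu$ is FID (Theorem \ref{thm0}); one has $\mathrm{Im}\,\Psi(z)=\mathrm{Im}\,z+(t-1)\,\mathrm{Im}(-\phi_\mu(z))\ge\mathrm{Im}\,z$, so $\Psi$ maps $\comp^+$ into $\comp^+$; and $\omega$ is analytic on $\comp^+$ (being $t\,\mathrm{Id}-(t-1)F_\nu$, a combination of maps analytic on $\comp^+$) with $\omega(z)\sim z$ as $z\to\infty$, since $\Psi(z)\sim z$ there.

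The first key step is to produce a critical point of $\Psi$ lying on the positive imaginary axis. Write $F_\mu(iy)=ih(y)$; the hypothesis says $h$ is a real-analytic increasing bijection of $(c,\infty)$ onto $(0,\infty)$ with $h'>0$, and $h'(c^+):=\lim_{y\downarrow c}h'(y)=0$ — this is explicit in case (2), and in case (1) (where $c=-\infty$) it follows from $h$ being onto $(0,\infty)$, for then $\int_{-\infty}^{0}h'(y)\,dy=h(0)<\infty$, forcing $\liminf_{y\to-\infty}h'(y)=0$. Consequently $F_\mu^{-1}(iv)=ih^{-1}(v)$, $\phi_\mu(iv)=i(h^{-1}(v)-v)$, and $\Psi(iv)=i\psi(v)$ with $\psi(v):=tv-(t-1)h^{-1}(v)$, so that $\Psi'(iv)=\psi'(v)=t-(t-1)(h^{-1})'(v)=t-(t-1)/h'(h^{-1}(v))$. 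As $v\downarrow 0$, $h'(h^{-1}(v))\to 0$, so $\psi'$ takes values close to $-\infty$; as $v\to\infty$, $h'(y)\to 1$ — which follows from $F_\mu(z)-z=o(|z|)$ in cones (Proposition \ref{prop08}(\ref{p3})) via a Cauchy estimate on discs of radius $|z|/2$ — so $\psi'(v)\to t-(t-1)=1>0$. Hence $\psi'$ has a zero $v_0\in(0,\infty)$; taking $v_0$ maximal, $\psi$ is increasing on $(v_0,\infty)$. Moreover $\psi(v)\ge v$ for all $v>0$, because $\psi(v)\ge v\Leftrightarrow v\ge h^{-1}(v)\Leftrightarrow h(v)\ge v$, and $h(v)=\mathrm{Im}\,F_\mu(iv)\ge\mathrm{Im}(iv)=v$ by Proposition \ref{prop08}(\ref{p2}). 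Thus $iv_0\in\comp^+$ is a critical point of $\Psi$ whose critical value $i\psi(v_0)$ again lies in $\comp^+$ (as $\psi(v_0)\ge v_0>0$).

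Then I would run the contradiction along $i(0,\infty)$. Since $\nu$ is symmetric, $G_\nu$ is odd, so $F_\nu(iv)$ is purely imaginary; hence $\omega(iv)=itv-(t-1)F_\nu(iv)$ is purely imaginary, say $\omega(iv)=i\omega_0(v)$. From $\Psi\circ\omega=\mathrm{Id}$ we get $\psi(\omega_0(v))=v$, and from $\omega(z)\sim z$ at $\infty$ we get $\omega_0(v)\sim v$; since $\psi(u)\sim u$ as $u\to\infty$, the branch $\omega_0$ must agree near $\infty$ with $(\psi|_{(v_0,\infty)})^{-1}$, so $\omega_0(v)\to v_0$ as $v\downarrow\psi(v_0)$, i.e.\ $\omega(i\psi(v_0))=iv_0$ by continuity of $\omega$ at $i\psi(v_0)\in\comp^+$. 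As $\omega$ stays in $\comp^+$ along $i(\psi(v_0),\infty)$, the identity $\Psi\circ\omega=\mathrm{Id}$ persists up to $i\psi(v_0)$, whence $1=(\Psi\circ\omega)'(i\psi(v_0))=\Psi'(iv_0)\,\omega'(i\psi(v_0))=0$, a contradiction. Therefore no such $\nu$ exists, $\mu\notin\mathbb B_t(\mathcal P)$ for all $t>1$, and $\phi(\mu)=1$.

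The main obstacle is the first paragraph together with the monodromy point at the end: one must carefully verify the transform identity $\Psi\circ\omega=\mathrm{Id}$ and confirm that $\omega=t\,\mathrm{Id}-(t-1)F_\nu$ — automatically analytic on $\comp^+$ because $\nu$ is a genuine probability measure — really is the analytic continuation of the naive inverse $\Psi^{-1}$ constructed near $\infty$; and in the last paragraph one must turn the informal ``branch of $\psi^{-1}$'' description into a rigorous analytic-continuation argument along $i(0,\infty)$, so that the continuation of $\omega$ provably meets the fibre over $i\psi(v_0)$ exactly at the critical point $iv_0$. The computations with $h'$ and $\psi'$, and the inequality $\psi(v)\ge v$, are routine given Proposition \ref{prop08}.
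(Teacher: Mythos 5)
Your proof is correct, and the central identity you flagged as the main obstacle does hold: from $\mu=\mathbb B_t(\nu)$ one gets $\mu^{\uplus(1+t)}=\nu^{\boxplus(1+t)}$, hence $F_\mu\bigl(u+(1+t)\phi_\nu(u)\bigr)=u+t\phi_\nu(u)$ near $\infty$, so $\phi_\mu\bigl(u+t\phi_\nu(u)\bigr)=\phi_\nu(u)$; substituting $u=F_\nu(v)$ and using $\phi_\nu(F_\nu(v))=v-F_\nu(v)$ gives exactly $\Psi(\omega(v))=v$ with your $\Psi=\mathrm{Id}-(t-1)\phi_\mu$ and $\omega=t\,\mathrm{Id}-(t-1)F_\nu$. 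The route differs from the paper's mainly in packaging. The paper invokes the identity $\phi(\mu)=\sup\{t\ge 0:\mu^{\uplus t}\text{ is FID}\}$ from [AH13b] as a black box and then applies the critical-point mechanism directly to $f_t(y)=\frac1i F_{\mu^{\uplus t}}(iy)=(1-t)y+tf(y)$: it locates $y_0$ with $f_t'(y_0)=0$ and $y_1=f_t(y_0)>0$, and contradicts the analyticity of $F_{\mu^{\uplus t}}^{-1}$ on $\comp^+$ at $iy_1$. That proof is six lines long. You instead work from the raw definition of $\phi$ via $\mathbb B_t$ and re-derive the needed consequence through the subordination identity; your $\Psi$ equals $F_{\mu^{\uplus t}}\circ F_\mu^{-1}$ on $\comp^+$ (indeed $F_{\mu^{\uplus t}}(F_\mu^{-1}(u))=(1-t)(u+\phi_\mu(u))+tu=u-(t-1)\phi_\mu(u)$), so your $\psi$ is the paper's $f_t$ conjugated by $h$ and your critical point $iv_0$ is the image under $F_\mu$ of the paper's $iy_0$. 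What your version buys is self-containedness with respect to the definition of the free divisibility indicator; what it costs is the extra transform manipulation and the monodromy argument along $i(0,\infty)$, which you handle correctly (the key point being that $\omega_0$ agrees with $(\psi|_{(v_0,\infty)})^{-1}$ near $\infty$ and hence on all of $(\psi(v_0),\infty)$, so $\omega(i\psi(v_0))=iv_0\in\comp^+$ and $1=\Psi'(iv_0)\,\omega'(i\psi(v_0))=0$).

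Two small points to tighten. First, you assert without proof that $\nu$ is symmetric; this does follow, because $\mathbb B_t$ commutes with the reflection $x\mapsto-x$ and is injective (apply $\uplus(1+t)$ and then use injectivity of $\nu\mapsto\phi_\nu$), but a line of justification is needed. Second, in case (1) you only obtain $\liminf_{y\to-\infty}h'(y)=0$, hence $\psi'$ is very negative only along a sequence $v_n\downarrow 0$; this still yields a zero of $\psi'$ by the intermediate value theorem together with $\psi'(v)\to 1$ as $v\to\infty$, but it is worth saying so explicitly, as the paper does implicitly via $f_t(\pm\infty)=\infty$.
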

\begin{proof}
For $t >1$, let $f(y):=\frac{1}{i}F_\mu(iy)$ and $f_t(y):=\frac{1}{i}F_{\mu^{\uplus t}}(iy) =  (1-t)y + t f(y)$. Suppose (1), then $f(-\infty) = 0$ and $f(\infty)=\infty$, and so $f_t(\pm\infty) =\infty$. 
Since $f_t'(\infty)=1$, we can find  a point $y_0 \in \real$ such that $f_t'(y_0) =0$ and $f_t'(y) >0$ for  $y \in (y_0,\infty)$. Let $y_1:= f_t(y_0) = (1-t)y_0 +tf(y_0)$. If $y_0  \leq 0$, then $y_1 > 0$ because $f>0$. If $y_0 >0$, then $y_1 \geq (1-t)y_0 + ty_0 = y_0 >0$ from Proposition \ref{prop08}(\ref{p2}). Hence $y_1>0$ in both cases. The inverse map $F_{\mu^{\uplus t}}^{-1}$ analytically extends to a neighborhood of $i(y_1,\infty)$, but $(F_{\mu^{\uplus t}}^{-1})'(iy_1 + i0) = \infty$. From Theorem \ref{thm0}, $\mu^{\uplus t}$ is not FID.

Suppose now (2), then $f_t'({}c)=1-t <0$ and $f_t'(\infty)=1$ and so we can find a point $y_0$ similarly. The remaining proof is the same as above. 
\end{proof}
\begin{prop}\label{cor0101} The free divisibility indicators of  Student t- and ultrashperical distributions can be calculated as follows. 
\begin{enumerate}[\rm(1)]
\item $\phi(\mathbf{t}_q)=1$ for $q \in (1, 2] \cup  \bigcup_{n=1}^\infty [2n + \frac{1}{4}, 2n+2]$.  
\item $\phi(\mathbf{u}_p)=1$ for $p \in [1,\infty)$. 
\end{enumerate}
\end{prop}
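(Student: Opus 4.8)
The plan is to verify, for each family, one of the two structural conditions in Lemma~\ref{prop323}. The free infinite divisibility required there is already available: $\mathbf{t}_q$ is FID for the $q$ in the statement by Theorem~\ref{thm1}(5) (indeed such $q$ lie in $\mathcal{UI}\cup\mathcal{UI}_s$ by Theorem~\ref{thm31}), and $\mathbf{u}_p$ is FID for $p\ge1$ by Corollary~\ref{cor1}(3). So the only work is to describe $F_\mu=1/G_\mu$ on the imaginary axis, $G_\mu$ denoting throughout the analytic continuation of the Cauchy transform constructed in Sections~\ref{sec2} and~\ref{st}.

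For part~(1) I would use Lemma~\ref{prop323}(2) with $c=-1$. By Lemma~\ref{lem129}, $G_{\mathbf{t}_q}$ already extends univalently to a neighbourhood of $i(-1,\infty)$, does not vanish there, and maps $i(-1,\infty)$ onto $i(-\infty,0)$; hence $F_{\mathbf{t}_q}$ extends univalently around $i(-1,\infty)$ and maps it onto $i(0,\infty)$, so only $F_{\mathbf{t}_q}'(-i+i0)=0$ remains to be checked. This is read off from Proposition~\ref{prop32}: near $z=-i$,
\[
G_{\mathbf{t}_q}(z)=\widetilde{G}_{\mathbf{t}_q}(z)-\frac{2\pi i}{B(\tfrac12,q-\tfrac12)}\,(1+z^2)^{-q},
\]
and since $\widetilde{G}_{\mathbf{t}_q}$ is analytic and bounded at $-i$ while $(1+z^2)^{-q}=(z-i)^{-q}(z+i)^{-q}$ blows up like a nonzero multiple of $(z+i)^{-q}$, one gets $G_{\mathbf{t}_q}(z)=C(z+i)^{-q}\bigl(1+o(1)\bigr)$ with $C\neq0$, hence $F_{\mathbf{t}_q}(z)=C^{-1}(z+i)^{q}\bigl(1+o(1)\bigr)$ and $F_{\mathbf{t}_q}'(z)=qC^{-1}(z+i)^{q-1}\bigl(1+o(1)\bigr)\to0$ as $z\to-i$, because $q>1$ for every $q$ in the set considered. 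Lemma~\ref{prop323}(2) then gives $\phi(\mathbf{t}_q)=1$.

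For part~(2) I would use Lemma~\ref{prop323}(1). Writing $\mathbf{u}_p$ as the image of $\bm{\beta}_{p+1/2,p+1/2}$ under $x\mapsto2x-1$, Proposition~\ref{prop78}(1) with Lemma~\ref{lem2}(\ref{affine}) continues $G_{\mathbf{u}_p}$ analytically to $\comp\setminus\bigl((-\infty,-1]\cup[1,\infty)\bigr)$, in particular to a neighbourhood of $i\real$, together with the jump formula $G_{\mathbf{u}_p}(z)=\widetilde{G}_{\mathbf{u}_p}(z)-2\pi i\,c\,(1-z^2)^{p-1/2}$ for $z\in\comp^-$, where $c=\bigl(4^pB(p+\tfrac12,p+\tfrac12)\bigr)^{-1}$ is the density of $\mathbf{u}_p$ at $0$. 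Since $\mathbf{u}_p$ is symmetric, $g(y):=\tfrac1i\,G_{\mathbf{u}_p}(iy)$ is real for $y\in\real$, and working out the above gives $g(y)=-y\,I(y)$ for $y\ge0$ and $g(y)=-y\,I(y)-2\pi c\,(1+y^2)^{p-1/2}$ for $y<0$, with $I(y):=\int_{-1}^1 c\,(1-x^2)^{p-1/2}(y^2+x^2)^{-1}\,dx>0$. The goal is to show $g$ is a strictly increasing bijection of $\real$ onto $(-\infty,0)$; since then $G_{\mathbf{u}_p}$ maps $i\real$ bijectively onto $i(-\infty,0)$ with $G_{\mathbf{u}_p}'(iy)=g'(y)\neq0$, it follows that $F_{\mathbf{u}_p}$ extends univalently around $i\real$ and maps $i\real$ onto $i(0,\infty)$, and Lemma~\ref{prop323}(1) yields $\phi(\mathbf{u}_p)=1$. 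Negativity of $g$ is immediate for $y\ge0$; for $y<0$ it follows from $-y\,I(y)\le c\int_{-1}^1\frac{-y}{y^2+x^2}\,dx<\pi c\le2\pi c\,(1+y^2)^{p-1/2}$ (using $(1-x^2)^{p-1/2}\le1$), and $g(y)\to0^-$ as $y\to+\infty$ while $g(y)\to-\infty$ as $y\to-\infty$. For the monotonicity the key identity, starting from $g'(y)=-I(y)-yI'(y)=\int_{-1}^1 c\,(1-x^2)^{p-1/2}\frac{y^2-x^2}{(y^2+x^2)^2}\,dx$ and integrating by parts (the boundary terms vanish since $(1-x^2)^{p-1/2}=0$ at $x=\pm1$ for $p>\tfrac12$), is
\[
g'(y)=2c\Bigl(p-\tfrac12\Bigr)\int_{-1}^1\frac{x^2(1-x^2)^{p-3/2}}{y^2+x^2}\,dx>0\qquad(y\ge0),
\]
and for $y<0$ the derivative of the extra term contributes the further positive quantity $-4\pi c(p-\tfrac12)\,y\,(1+y^2)^{p-3/2}$; hence $g'(y)>0$ for all $y\in\real$.

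The main obstacle is exactly this positivity $g'>0$ for $\mathbf{u}_p$: a direct leading-order estimate of $-I(y)-yI'(y)$ near $y=0$ shows the two dominant contributions cancel, so the integration-by-parts identity—which rewrites $g'$ as a manifestly positive integral when $p>\tfrac12$—is what makes the argument work. The remaining ingredients (disposing of the exceptional values of $q$, the boundary and limiting values of the continued Cauchy transforms, and the bookkeeping needed to pass to Lemma~\ref{prop323}) are routine, along the lines of Sections~\ref{sec3} and~\ref{st}.
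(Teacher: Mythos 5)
Your proposal is correct and follows essentially the same route as the paper: part (1) applies Lemma~\ref{prop323}(2) with $c=-1$ via Lemma~\ref{lem129} and the singular behaviour $(1+z^2)^{-q}$ from Proposition~\ref{prop32}, and part (2) applies Lemma~\ref{prop323}(1) after showing $\frac{1}{i}G_{\mathbf{u}_p}(iy)$ is a strictly increasing bijection of $\real$ onto $(-\infty,0)$. The only cosmetic difference is that you obtain the positivity of $g'$ by integrating by parts directly in the integral $\int_{-1}^1(1-x^2)^{p-1/2}\frac{y^2-x^2}{(y^2+x^2)^2}\,dx$, whereas the paper invokes the first-order differential equation of Lemma~\ref{dif} (itself derived by the same integration by parts), so the two computations are interchangeable.
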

\begin{rem}
$\mathbf{t}_1$ is a Cauchy distribution and its free divisibility indicator is infinity because $\mathbf{t}_1^{\uplus t}$ is a Cauchy distribution too. The exact value is unknown for $q \in (\frac{1}{2},1)$. 

The measure $\mathbf{u}_{0}$ is a symmetric arcsine law and $\mathbf{u}_{-1/2}:=\lim_{p\to-1/2}\mathbf{u}_{p}$ is a symmetric Bernoulli law. It is known that $\phi(\mathbf{u}_0)=\frac{1}{2}$ and $\phi(\mathbf{u}_{-1/2})=0$ \cite[Table 1]{BN08}. The value $\phi(\mathbf{u}_p)$ is not known for $p \in (-\frac{1}{2},0)\cup (0,1)$. 
\end{rem}
\begin{proof}

(1)\,\,\,
$F_{\mathbf{t}_q}$ is univalent around $i(-1,\infty)$ and it maps $i(-1,\infty)$ onto $i(0,\infty)$ from Lemma \ref{lem129}. From Proposition \ref{prop32}, $F'_{\mathbf{t}_q}(iy)$ is approximately proportional to $(1-y^2)^{q-1}$ as $y \searrow -1$, so that $F'_{\mathbf{t}_q}(-i+i0) =0$ if (and only if) $q >1.$ Now Lemma \ref{prop323}(2) is applicable.

(2)\,\,\, Let $p\geq 1$. We are going to show that $\text{Im}\, G_{\mathbf{u}_p}(iy)$ is strictly increasing in $\real$, following Lemma \ref{lem129}.  
If $X \sim \bm{\beta}_{p+\frac{1}{2}, p+\frac{1}{2}}$, then $2X-1 \sim \mathbf{u}_p$. From Lemma \ref{lem2}(\ref{affine}) and Proposition \ref{prop78}, we have $G_{\mathbf{u}_p}(z)=\frac{1}{2}G_{\bm{\beta}_{p+\frac{1}{2}, p+\frac{1}{2}}}(\frac{z+1}{2})$ and 
\begin{equation}\label{eq0500}
G_{\mathbf{u}_p}(iy)=\widetilde{G}_{\mathbf{u}_p}(iy) - \frac{4^{-p+\frac{1}{2}}\pi i }{B(p+\frac{1}{2},p+\frac{1}{2})}(1+y^2)^{p-\frac{1}{2}}, ~~y\in(-\infty,0),  
\end{equation}
and so 
\begin{equation}\label{eq0505}
G'_{\mathbf{u}_p}(iy)=\widetilde{G}'_{\mathbf{u}_p}(iy) - \frac{4^{-p+\frac{1}{2}}\pi(2p-1)y}{B(p+\frac{1}{2},p+\frac{1}{2})}(1+y^2)^{p-\frac{3}{2}}, ~~y\in(-\infty,0). 
\end{equation}
The second term is positive. 

The differential equation in Lemma \ref{dif} yields 
\begin{equation}\label{eq0501}
\begin{split}
\widetilde{G}'_{\mathbf{u}_p}(iy) &= \frac{1}{1+y^2} +\frac{2p-1}{1+y^2}(1-iy\widetilde{G}_{\mathbf{u}_p}(iy)) \\
&= \frac{1}{1+y^2} +\frac{2p-1}{1+y^2}\int_{-1}^1\frac{x^2}{x^2+y^2}\,\mathbf{u}_p(dx) >0,~~y \neq 0, 
\end{split}
\end{equation}
and also $G_{\mathbf{u}_p}'(0)=2p>0$. (\ref{eq0505}) and (\ref{eq0501}) entail that $G_{\mathbf{u}_p}'(iy)>0$ for $y\in\real$. 
Moreover, $G_{\mathbf{u}_p}(z)$ maps $i\real$ onto $i(-\infty,0)$ because $\text{Im}\, G_{\mathbf{u}_p}(iy) \to 0$ as $y \to \infty$ and $\text{Im}\, G_{\mathbf{u}_p}(iy) \to -\infty$ as $y \to -\infty$ because of (\ref{eq0500}). Now we can apply Lemma \ref{prop323}(1). 
\end{proof}

The free divisibility indicator is not continuous with respect to the weak convergence, as one can observe from Wigner's semicircle law $\mathbf{w}_t$ with mean $0$ and variance $t$. Indeed, $\phi(\mathbf{w}_t)=1$ for any $t >0$, while 
$\phi({\mathbf{w}_0}) = \infty$ (see \cite{BN08} for this computation). Hence, Proposition \ref{cor0101}  is not sufficient to calculate the exact value of the free divisibility indicator of Gaussian which is the weak limit of scaled ultraspherical distributions or t-distributions. Here we will show that the value is exactly 1 for the Gaussian distribution. The classical infinite divisibility of the Boolean power of Gaussian is also studied here.

\begin{prop}\label{prop90} Let $\mathrm{\bf g}$ be the standard Gaussian. 
\begin{enumerate}[\rm(1)]
\item $\phi(\mathrm{\bf g})=1$, or equivalently, $\mathrm{\bf g}^{\uplus t}$ is FID if and only if $0 \leq t \leq 1$. 

\item $\mathrm{\bf g}^{\uplus t}$ is classically infinitely divisible if and only if $t \in \{0,1 \}$. 
\end{enumerate}
\end{prop}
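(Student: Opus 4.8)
The plan is to prove the two parts by quite different means: (1) rests on Lemma~\ref{prop323} together with the known free infinite divisibility of $\mathbf{g}$, whereas (2) is a computation of classical cumulants.

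\emph{Part (1).} Since $\mathbf{g}$ is FID (see Remark~\ref{gauss} or \cite{BBLS11}), we automatically have $\phi(\mathbf{g})\geq 1$, so it suffices to prove $\phi(\mathbf{g})\leq 1$, and I will do this by verifying the hypothesis of Lemma~\ref{prop323}(1): that $F_{\mathbf{g}}$ extends to a univalent map around $i\real$ carrying $i\real$ onto $i(0,\infty)$. The Cauchy transform $G_{\mathbf{g}}$ continues analytically from $\comp^+$ to an entire function on $\comp$ (a classical fact about the Gaussian, related to the complementary error function) and satisfies the first-order differential equation $G_{\mathbf{g}}'(z)=1-zG_{\mathbf{g}}(z)$, by integration by parts exactly as in Lemma~\ref{dif}. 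Writing $G_{\mathbf{g}}(iy)=ih(y)$ with $h$ real-valued, the equation restricted to $i\real$ becomes $h'(y)=1+yh(y)$. For $y>0$ a direct computation gives $h(y)=-2y\int_0^\infty \frac{e^{-x^2/2}}{\sqrt{2\pi}\,(y^2+x^2)}\,dx$, so $yh(y)\in(-1,0)$ and hence $h'(y)>0$; for $y<0$ one uses the entire continuation of $G_{\mathbf{g}}$ (equivalently, the jump relation saying that the continuation of $G_{\mathbf{g}}$ from $\comp^-$ exceeds that from $\comp^+$ by $2\pi i$ times the entire function $\frac{1}{\sqrt{2\pi}}e^{-z^2/2}$) to deduce that $h(y)<0$ there as well, whence again $h'(y)=1+yh(y)>0$. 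Thus $h$ is a strictly increasing bijection of $\real$ onto $(-\infty,0)$; consequently $G_{\mathbf{g}}$ maps $i\real$ bijectively onto $i(-\infty,0)$ with nowhere-vanishing derivative, and $F_{\mathbf{g}}=1/G_{\mathbf{g}}$ is analytic and univalent on a neighbourhood of $i\real$ and maps $i\real$ onto $i(0,\infty)$. Lemma~\ref{prop323}(1) then yields $\phi(\mathbf{g})=1$, equivalently $\mathbf{g}^{\uplus t}$ is FID precisely for $0\leq t\leq 1$.

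\emph{Part (2).} The cases $t\in\{0,1\}$ are immediate, since $\mathbf{g}^{\uplus 0}=\delta_0$ and $\mathbf{g}^{\uplus 1}=\mathbf{g}$ are classically infinitely divisible. For $t\notin\{0,1\}$ I will exhibit a violated necessary condition. The Boolean cumulants of $\mathbf{g}^{\uplus t}$ are $t$ times those of $\mathbf{g}$, so the moments of $\mathbf{g}^{\uplus t}$, and hence its classical cumulants, are explicit polynomials in $t$; in particular $\kappa_2^{(t)}=t$, $\kappa_4^{(t)}=2t(1-t)$ and $\kappa_6^{(t)}=2t(t-1)(8t-5)$. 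If $\mathbf{g}^{\uplus t}$ were classically infinitely divisible, then, being symmetric, its L\'evy measure $\nu$ would satisfy $\kappa_{2n}^{(t)}=\int x^{2n}\,\nu(dx)\geq 0$ for every $n\geq 2$, and more generally $(\kappa_4^{(t)},\kappa_6^{(t)},\kappa_8^{(t)},\dots)$ would be a Hamburger moment sequence, so that the Cauchy--Schwarz inequalities $(\kappa_{2n}^{(t)})^2\leq \kappa_{2n-2}^{(t)}\,\kappa_{2n+2}^{(t)}$ hold. Already $\kappa_4^{(t)}<0$ for $t>1$, ruling out that range; and for $0<t<1$ one has $\kappa_4^{(t)}>0$ but $(\kappa_4^{(t)})^2\leq \kappa_2^{(t)}\kappa_6^{(t)}$ fails for $t\in(\tfrac12,1)$, $(\kappa_6^{(t)})^2\leq \kappa_4^{(t)}\kappa_8^{(t)}$ fails on a further subinterval, and so on.

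\emph{Where the difficulty lies.} Part (1) is essentially routine once the entire continuation of $G_{\mathbf{g}}$ and the differential equation are in hand; the only care needed is the bookkeeping on the negative imaginary axis. The genuine obstacle is Part (2) for $0<t<1$: any finite list of cumulant inequalities covers only a sub-range, so one must either show that the resulting thresholds exhaust $(0,1)$---they appear to decrease to $0$, but this requires controlling the whole family $(\kappa_{2n}^{(t)})_{n}$---or else replace the local cumulant argument by a global one, e.g. analysing the characteristic function of $\mathbf{g}^{\uplus t}$ (which is entire, since $\mathbf{g}^{\uplus t}$ has Gaussian tails by the explicit density formula) and showing that it must take a negative value for every $t\in(0,1)$, contradicting positivity of the characteristic function of a symmetric infinitely divisible law.
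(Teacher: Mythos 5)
Part (1) of your proposal is correct and is essentially the paper's own argument: both proofs verify hypothesis (1) of Lemma \ref{prop323} by using a first-order ODE on the imaginary axis to show that the (reciprocal) Cauchy transform of $\mathbf{g}$ is strictly monotone there and maps $i\real$ onto the required half-line; working with $G_{\mathbf{g}}$ and $h'(y)=1+yh(y)$ rather than with $f(y)=\frac{1}{i}F_{\mathbf{g}}(iy)$ and $f'=f^2-yf$ is only a change of variables. (You should still record the one-line estimate $|y|\int_\real(y^2+x^2)^{-1}\,\mathbf{g}(dx)\le\sqrt{\pi/2}<\sqrt{2\pi}\,e^{y^2/2}$ that makes ``$h(y)<0$ for $y<0$'' explicit, but that is bookkeeping, as you say.)

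Part (2), however, has a genuine gap, which you yourself flag: your cumulant inequalities dispose of $t>1$ (via $\kappa_4^{(t)}=2t(1-t)<0$) and of $t\in(\tfrac12,1)$ (via $\kappa_6^{(t)}<0$ or the failure of $(\kappa_4^{(t)})^2\le\kappa_2^{(t)}\kappa_6^{(t)}$), but each further Hankel/Cauchy--Schwarz inequality only removes another subinterval, and you give no argument that the resulting thresholds accumulate at $0$. Controlling the whole family $(\kappa_{2n}^{(t)})_n$ is precisely the hard part --- compare Section \ref{Hankel}, where the analogous strategy for the exponential distribution already requires the 16th Hankel determinant --- so ``and so on'' does not close the range $t\in(0,\tfrac12]$. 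The paper avoids cumulants entirely: it first shows $xG_{\mathbf{g}}(x)\to1$ as $|x|\to\infty$ by shifting the contour of integration, so that the density of $\mathbf{g}^{\uplus t}$, which by the Stieltjes inversion formula equals $\frac{t}{|(1-t)xG_{\mathbf{g}}(x)+t|^2}\cdot\frac{1}{\sqrt{2\pi}}e^{-x^2/2}$, is asymptotic to $\frac{t}{\sqrt{2\pi}}e^{-x^2/2}$ at infinity; by Corollary 9.9 of \cite{HS04} a classically infinitely divisible law with such Gaussian-like tails must be exactly Gaussian, which $\mathbf{g}^{\uplus t}$ is not for $t\neq0,1$ (e.g.\ because $\kappa_4^{(t)}\neq0$). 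This single tail argument handles all $t\notin\{0,1\}$ at once. Your sketched alternative via negativity of the characteristic function is likewise not carried out; to repair the proof, replace the cumulant computation by the tail argument above (your explicit density formula for $\mathbf{g}^{\uplus t}$ is already the right starting point).
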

\begin{proof}

(1)\,\,\, Some properties shown below are known in \cite{BBLS11}, but we try to make this proof self-contained.  We are going to check Lemma \ref{prop323}(\ref{fdi1}). 
Let $f(y)$ denote the function $\frac{1}{i}F_{\mathbf{g}}(iy)$.  
The function $F_{\mathbf{g}}$ extends to $i\real$ analytically and does not have a pole  in $i\real$ since 
$F_{\mathbf{g}}(z) = \lim_{p\to\infty}F_{D_{\sqrt{2q}}(\mathbf{t}_q)}(z)$ locally uniformly in $i\real$. This convergence holds not only in $i(0,\infty)$ but also in $i(-\infty,0]$ 
by changing the contour $\real$ of the integral in $G_{D_{\sqrt{2q}}(\mathbf{t}_q)}$ to an arc in $\comp^-\cup \real$ as in Proposition \ref{prop78}. Because $ \frac{1}{i}F_{D_{\sqrt{2q}}(\mathbf{t}_q)}(iy) >0$ for $y \in (-\sqrt{2q},\infty)$ and it is strictly increasing as proved in Lemma \ref{lem129}, one also has $f(y)>0$ for $y\in\real$. 

The function $f$ satisfies the differential equation 
\begin{equation}\label{eq45}
f'(y) = f(y)^2 -yf(y)
\end{equation}
as proved in \cite[Eq.\ (3.6)]{BBLS11}, which also follows from a limit of Lemma \ref{lem11}(\ref{lem31}).  If $y >0$, then $f(y) >y$ from the basic property of a reciprocal Cauchy transform, and hence $f'(y) =f(y) (f(y) -y) >0$. If $y \leq0$, then $f'(y) >0$ from the fact $f(y)>0$ and (\ref{eq45}). Hence, $f'(y)>0$ for every $y\in\real$. 

We know that $f(\infty)=\infty$ from Proposition \ref{prop08}(\ref{p3}). Since $f$ is increasing, the limit $a:=\lim_{y \to -\infty} f(y)$ exists in $[0,\infty)$. If $a$ were strictly positive, then $f'(-\infty) =\infty$ from (\ref{eq45}). However $f(y) = f(0) -\int_{y}^0 f'(x)dx$, implying $f(-\infty) =-\infty$, a contradiction. Hence $a=0$.

(2) \,\,\, By shifting the contour by $-i$ , one can write 
$$
xG_{\mathbf{g}}(x) =\frac{1}{\sqrt{2\pi}} \int_{\real}\frac{x}{x-y+ i}e^{-\frac{1}{2}(y-i)^2}dy. 
$$ 
We divide the integral into two parts.   
First we find 
$$
\left|\frac{1}{\sqrt{2\pi}}\int_{\sqrt{x}} ^\infty \frac{x}{x-y+ i}e^{-\frac{1}{2}(y-i)^2}dy\right| \leq \frac{1}{\sqrt{2\pi}}\int_{\sqrt{x}} ^\infty y^2e^{-\frac{1}{2}y^2 +\frac{1}{2}}dy \to 0 \text{~as~} x \to \infty. 
$$ 
Next, we have $\sup_{y \in (-\infty, \sqrt{x}]}|\frac{x}{x-y+i} -1| \to 0$ as $x\to \infty$  and hence 
$$
\frac{1}{\sqrt{2\pi}}\int_{-\infty}^{\sqrt{x}} \frac{x}{x-y+ i}e^{-\frac{1}{2}(y-i)^2}dy \to 1 \text{~as~} x \to \infty
$$ 
from the dominated convergence theorem. By symmetry, we conclude $xG_{\mathbf{g}}(x) \to 1$ as $|x| \to \infty$. 

From the Stieltjes inversion formula, the density of $\mathbf{g}^{\uplus t}$ can be written as 
$$
\frac{t}{|(1-t)xG_{\mathbf{g}}(x) +t|^2}\frac{1}{\sqrt{2\pi}}e^{-\frac{x^2}{2}}. 
$$
For each $t>0$, the above density behaves like $\frac{t}{\sqrt{2\pi}}e^{-\frac{x^2}{2}}$ for large $|x|>0$ since $xG_{\mathbf{g}}(x) \to 1$. Any classically infinitely divisible distribution with Gaussian-like tail behavior must be exactly a Gaussian (see Corollary 9.9 of \cite{HS04}). Therefore, $\mathrm{\bf g}^{\uplus t}$ is not infinitely divisible for $ t \neq 0,1$. 
\end{proof}

Finally we show a general result which in particular enables us to compute the free divisibility indicator of the distribution $\mathbf{w}\boxplus \mathbf{g}$. This measure appeared as the spectral distribution of large random Markov matrices \cite{BDJ06}. 

\begin{prop}
%\begin{enumerate}[\rm(1)]
$\phi(\mathbf{w}\boxplus \mu)=1$ for any $\mu\in \mathcal{UI}_s$.  In particular,  $\phi(\mathbf{w} \boxplus \mathbf{g})=1$. 
%\item Let $\mathbf{m}(\lambda,\nu)$ denote the compound free Poisson distribution with rate $\lambda >0$ and jump distribution $\nu$. 
%Assume that $\nu$ is a symmetric probability measure satisfying $\int_{0}^1 \frac{1}{x^2}\,\nu(dx)=\infty$. Then $\phi(\mathbf{m}(\lambda,\nu) )=1$ for $0<\lambda\leq1$. 
%\end{enumerate}
\end{prop}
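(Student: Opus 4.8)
The plan is to prove $\phi(\mathbf{w}\boxplus\mu)\ge 1$ by an easy Pick‑function argument, and then to pin the value to exactly $1$ by applying Lemma~\ref{prop323}(\ref{fdi1}) to $\nu:=\mathbf{w}\boxplus\mu$. For the lower bound: $\mathbf{w}$ is FID with $\phi_{\mathbf{w}}(z)=1/z$, and $\mu\in\mathcal{UI}_s$ is FID by Lemma~\ref{lem112}, so $-\phi_{\nu}=-\phi_{\mathbf{w}}-\phi_{\mu}$ is a sum of Pick functions, hence Pick; by Theorem~\ref{thm0}, $\nu$ is FID. Since $\nu$ is visibly symmetric, it then suffices to verify the analytic hypothesis of Lemma~\ref{prop323}(\ref{fdi1}): that $F_{\nu}$ extends to a univalent map around $i\real$ carrying $i\real$ onto $i(0,\infty)$.

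For this I would use subordination of $\nu$ to $\mu$. Because the admixed measure is the standard semicircle, the subordination function is $\omega(z)=z-G_{\nu}(z)$ and $F_{\nu}=F_{\mu}\circ\omega$; equivalently $G_{\nu}$ solves the scalar functional equation $G_{\nu}(z)=G_{\mu}\bigl(z-G_{\nu}(z)\bigr)$. Restricting to the imaginary axis and using symmetry (Proposition~\ref{prop08}(\ref{p5})), write $G_{\nu}(iy)=iG_{*}(y)$ with $G_{*}$ real. Since $\mu\in\mathcal{UI}_s$, $F_{\mu}$ maps $i(c,\infty)$ onto $i(0,\infty)$, so $G_{\mu}(iy)=-ig_{\mu}(y)$ where $g_{\mu}:=1\big/\bigl(\tfrac1iF_{\mu}(i\cdot)\bigr):(c,\infty)\to(0,\infty)$ is a strictly decreasing real‑analytic bijection with $g_{\mu}(c+0)=+\infty$, $g_{\mu}(+\infty)=0$ (monotonicity via Proposition~\ref{prop08}(\ref{p2})). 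Putting $\tau(y):=y-G_{*}(y)$, the functional equation collapses to the fixed‑point relation
\begin{equation*}
\tau=y+g_{\mu}(\tau),\qquad\text{i.e.}\qquad y=\tau-g_{\mu}(\tau)=:\Phi(\tau).
\end{equation*}
As $\Phi'=1-g_{\mu}'>1>0$, $\Phi$ is a strictly increasing bijection $(c,\infty)\to\real$, so $\tau=\Phi^{-1}$ is a strictly increasing real‑analytic bijection $\real\to(c,\infty)$ with $\tau(-\infty)=c$, $\tau(+\infty)=+\infty$. Hence $f(y):=\tfrac1iF_{\nu}(iy)=\bigl(g_{\mu}(\tau(y))\bigr)^{-1}$ is strictly increasing on $\real$, carries $\real$ onto $(0,\infty)$, and satisfies $f(-\infty)=0$, $f(+\infty)=+\infty$. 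To promote this from the axis to a genuine two‑dimensional neighborhood of $i\real$ I would continue $G_{\nu}$ across $i\real$ by the implicit function theorem applied to $G=G_{\mu}(z-G)$ — admissible since $1+G_{\mu}'(\omega(iy))=1-g_{\mu}'(\tau(y))\neq0$, and since at $z=0$ one checks from symmetry and the fixed‑point relation that $\nu$ has neither an atom nor a gap there, so its (FID) density is real‑analytic and positive near $0$ — and then observe that $\omega$ is locally univalent near $i\real$ (as $\omega'(iy)=\tau'(y)>0$) with values in $i(c,\infty)$, where $F_{\mu}$ is univalent; together with univalence of $F_{\nu}$ on $\comp^{+}$ (Theorem~\ref{thm0}, Remark~\ref{rem56}) this gives univalence of $F_{\nu}$ around $i\real$. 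Lemma~\ref{prop323}(\ref{fdi1}) then yields $\phi(\mathbf{w}\boxplus\mu)=1$.

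Finally, $\phi(\mathbf{w}\boxplus\mathbf{g})=1$ is the special case $\mu=\mathbf{g}$ once one notes $\mathbf{g}\in\mathcal{UI}_s$ (it is a weak limit of the scaled measures $D_{\sqrt{2q}}\mathbf{t}_q$ with $\mathbf{t}_q\in\mathcal{UI}_s$); in any event the argument above runs verbatim for $\mathbf{g}$ using the fact, established inside the proof of Proposition~\ref{prop90}, that $\tfrac1iF_{\mathbf{g}}(i\cdot)$ is an increasing bijection of $\real$ onto $(0,\infty)$ — the formal case "$c=-\infty$" — so that $g_{\mathbf{g}}(y):=i/F_{\mathbf{g}}(iy)$ is a decreasing bijection $\real\to(0,\infty)$ and the relation $\tau=y+g_{\mathbf{g}}(\tau)$ behaves exactly as before. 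I expect the genuinely delicate point to be not the axis computation, which reduces cleanly to the monotone scalar equation $\tau=y+g_{\mu}(\tau)$, but the rigorous passage to a neighborhood of $i\real$: one must justify the analytic continuation of $\omega$ and $G_{\nu}$ and the \emph{global} univalence of $F_{\nu}$ there, not merely an identity along the axis — precisely the subtlety flagged in the remark following the proof of Theorem~\ref{thm11} — and this is where the FID property of $\nu$ and the precise $\mathcal{UI}_s$ structure of $\mu$ must be used with care.
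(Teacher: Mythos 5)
Your proof is correct, but it takes a genuinely different route from the paper's. The paper works entirely with inverse functions: since $\phi_{\mathbf{w}}(z)=\frac{1}{z}$, it writes $F^{-1}_{\mathbf{w}\boxplus\mu}(z)=F^{-1}_\mu(z)+\frac{1}{z}$, sets $g(y):=\frac{1}{i}F^{-1}_{\mathbf{w}\boxplus\mu}(iy)$ for $y>0$, and observes that the $\mathcal{UI}_s$ hypothesis makes $y\mapsto\frac{1}{i}F^{-1}_\mu(iy)$ increasing on $(0,\infty)$, so that $g$ is an increasing real-analytic bijection of $(0,\infty)$ onto $\real$ (using $g(0^+)=-\infty$ and $g(\infty)=\infty$); its inverse is then automatically the required univalent extension of $F_{\mathbf{w}\boxplus\mu}$ around $i\real$ onto $i(0,\infty)$, and Lemma \ref{prop323}(\ref{fdi1}) finishes. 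You instead go through the subordination identity $G_\nu(z)=G_\mu(z-G_\nu(z))$ and solve the fixed-point equation $\tau=y+g_\mu(\tau)$ on the axis. The two computations are dual parametrizations of the same curve --- your $\Phi(\tau)=\tau-g_\mu(\tau)$ is exactly the paper's $g$ read in the variable $\tau=\frac{1}{i}F^{-1}_\mu(iv)$ rather than $v=\frac{1}{i}F_\nu(iy)$ --- so the monotonicity input ($\Phi'>1$ versus $g'>0$) is identical. What the paper's formulation buys is brevity: inverting a globally monotone real-analytic bijection immediately yields the extension, with no need for the implicit-function-theorem continuation of $G_\nu$ across $i(-\infty,0]$ or the separate discussion of univalence near $i\real$ that you (rightly) flag as the delicate point; your version makes the subordination picture explicit, which is illuminating but costs you that extra continuation argument (which does go through, since $1+G_\mu'(i\tau)=1-g_\mu'(\tau)>1$ everywhere along the axis). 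Two small remarks: the preliminary Pick-function step showing $\mathbf{w}\boxplus\mu$ is FID is not needed once Lemma \ref{prop323} applies, and for $\mathbf{g}\in\mathcal{UI}_s$ the paper simply cites \cite{BBLS11} for $\mathbf{g}\in\mathcal{UI}\subset\mathcal{UI}_s$; your weak-limit justification via $D_{\sqrt{2q}}\mathbf{t}_q$ would require knowing $\mathcal{UI}_s$ is weakly closed, which the paper does not establish, so you should rely on your "runs verbatim" fallback (or the citation) there.
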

\begin{proof}
The measure $\mathbf{w} \boxplus\mu$ is clearly FID if $\mu \in \mathcal{UI}_s$. 
Since $\phi_{\mathbf{w}}(z)=\frac{1}{z}$, one gets the formula $F^{-1}_{\mathbf{w} \boxplus\mu}(z)=F^{-1}_\mu(z)+\frac{1}{z}$. 
Let us define $g(y):= \frac{1}{i}F^{-1}_{\mathbf{w} \boxplus\mu}(iy)= \frac{1}{i}F^{-1}_{\mu}(iy)-\frac{1}{y}$ for $y>0$. The assumption $\mu\in\mathcal{UI}_s$ implies that the map $y\mapsto \frac{1}{i}F^{-1}_{\mu}(iy)$ has positive derivative in $(0,\infty)$, so that $g'(y)>0$ for $y>0$ and $g(+0)=-\infty$. Moreover, $g(\infty)=\infty$ since $\lim_{y\to\infty}\frac{F^{-1}_\mu(iy)}{iy}=1$ (see \cite[Corollary 5.5]{BV93}). Hence there exists a real analytic compositional inverse of $g$ in $\real$, which extends $F_{\mathbf{w} \boxplus\mu}$ to a univalent map around $i\real$, mapping $i\real$ onto $i(0,\infty)$. Consequently, $\mathbf{w} \boxplus\mu$ satisfies the assumption of Lemma \ref{prop323}(\ref{fdi1}). It is proved in \cite{BBLS11} that $\mathbf{g}$ belongs to $\mathcal{UI}$ and hence to $\mathcal{UI}_s$, so the latter assertion holds. 
%(2)\,\,\, The free compound Poisson distribution $\mathbf{m}(\lambda,\nu)$ is characterized by the formula (see \cite{NS06, AHS})
%$$
%\phi_{\mathbf{m}(\lambda,\nu)}(z)= \lambda z (-1+zG_\nu(z)). 
%$$
%Let $h(y):=\frac{1}{i}F^{-1}_{\mathbf{m}(\lambda,\nu)}(iy)$ for $y>0$. Elementary calculus yields 
%$$
%h(y)= y-\lambda \int_{\real}\frac{x^2y}{x^2+y^2}\,\nu(dx),~~h'(y)= \int_{\real}\frac{(1-\lambda)x^4 +(2+\lambda)x^2y^2+y^4}{(x^2+y^2)^2}\,\nu(dx), 
%$$
%and so $h(+0)=0$, $h(\infty)=\infty$ and $h'(y)>0$ for $y>0$ provided $0<\lambda \leq 1$. We then have $h'(+0)=$This implies that $F_{\mathbf{m}(\lambda,\nu)}$ is univalent around $i(0,\infty)$, mapping $i(0,\infty)$ onto itself. Hence the assumption of Lemma \ref{prop323}(\ref{fdi2}) is satisfied for $c=0$. 
\end{proof}
\begin{rem}  
The semicircle law can be replaced by a symmetric free $\alpha$-stable law $\mathbf{f}_\alpha$ for any $\alpha \in(1,2]$ without difficulty. The measure $\mathbf{f}_\alpha$ is characterized by $\phi_{\mathbf{f}_\alpha}(z)=-e^{\frac{i \alpha\pi}{2}}z^{1-\alpha}$ for $\alpha\in(0,2]$ \cite{BV93, BP99} and so $\frac{1}{i}\phi_{\mathbf{f}_\alpha}(iy)=-y^{1-\alpha}$. In this context, it is known in \cite{AHa} that $\phi(\mathbf{f}_\alpha)=1$ for $\alpha \in(\frac{2}{3},2]$ and $\phi(\mathbf{f}_\alpha)=\infty$ for $\alpha \in(0,\frac{2}{3}]$. 
\end{rem}

\section*{Acknowledgement}
This work was supported by Marie Curie Actions--International Incoming Fellowships Project  328112 ICNCP and by Global COE program at Kyoto university. The author thanks Octavio Arizmendi and all referees for a lot of comments to improve this paper.


\begin{thebibliography}{100}
%% Total bibitems: 41, successfull: 38, errors: 3
%% Generated through BatchMRef at http://www.e-publications.org

\bibitem[AS70]{AS70} M.\ Abramowitz and I.A.\ Stegun, \emph{Handbook of Mathematical Functions with Formulas, Graphs, and Mathematical Tables}, National
Bureau of Standards, Washington, 1970. 


\bibitem[A65]{A65} N.I.\ Akhiezer, \emph{The Classical Moment Problem and some related questions in analysis}, Translated by N.\ Kemmer Hafner Publishing Co., New York 1965 x+253 pp.  


\bibitem[ABBL10]{ABBL10} M.\ Anshelevich, S.T.\ Belinschi, M.\ Bo\.zejko and F.\ Lehner, Free infinite divisibility for Q-Gaussians, Math.\ Res.\ Lett.\ \textbf{17} (2010), 905--916.


\bibitem[ABP09]{ABP09} O.\ Arizmendi, O.E.\ Barndorff-Nielsen and V.\ P\'{e}rez-Abreu, On free and classical type $G$ distributions, Braz.\ J.\ Probab.\ Stat.\ \textbf{24}, No.\ 2 (2010), 106--127.


\bibitem[AB13]{AB} O.\ Arizmendi and S.T.\ Belinschi, Free infinite divisibility for ultrasphericals, Infin.\ Dimens.\ Anal.\ Quantum Probab.\ Relat.\ Top.\ \textbf{16} (2013), 1350001 (11 pages). 


\bibitem[AH14]{AHa} O.\ Arizmendi and T.\ Hasebe, Classical and free infinite divisibility for Boolean stable laws, Proc.\ Amer.\ Math.\ Soc.\ {\bf142} (2014), 1621--1632. 


\bibitem[AH13a]{AHb}   O.\ Arizmendi and T.\ Hasebe, On a class of explicit Cauchy-Stieltjes transforms related to monotone stable and free Poisson laws, Bernoulli \textbf{19}(5B), 2013, 2750--2767. 


\bibitem[AH13b]{AHc} O.\ Arizmendi and T.\ Hasebe, Semigroups related to additive and multiplicative, free and Boolean convolutions, Studia Math.\ \textbf{215} (2013), 157--185. 


\bibitem[AHS13]{AHS} O.\ Arizmendi, T.\ Hasebe and N.\ Sakuma, On the law of free subordinators, ALEA Lat.\ Amer.\ J.\ Probab.\ Math.\ Stat. \textbf{10}, No.\ 2 (2013), 271--291. 


\bibitem[AP10]{AP10} O.\ Arizmendi and V.\ P\'erez-Abreu, On the non-classical infinite divisibility of power semicircle distributions, Commun.\ Stoch.\ Anal.\ \textbf{4} (2010), 161--178. 


\bibitem[AM12]{AM12} A.\ Arteaga and J.\ Molina, Random matrix models of stochastic integral type for free infinitely divisible distributions, Period.\ Math.\ Hungar.\ \textbf{64}, Issue 2 (2012), 145--160. 


\bibitem[BT06]{BT06} O.E.\ Barndorff-Nielsen  and S.\ Thorbj{\o}rnsen, Classical and free infinite divisibility and L\'{e}vy processes, In: Quantum Independent Increment Processes II, M.\ Sch\"{u}rmann and U.\ Franz (eds), Lect.\ Notes in Math.\ \textbf{1866}, Springer, Berlin, 2006.


\bibitem[BB04]{BB04} S.T.\ Belinschi and H.\ Bercovici, Atoms and regularity for measures in a partially defined free convolution semigroups, Math.\ Z.\ \textbf{248} (2004), 665--674. 


\bibitem[BB05]{BB05} S.T.~Belinschi and H.~Bercovici, Partially defined semigroups relative to multiplicative free convolution, Int.\ Math.\ Res.\ Notices, No.\ 2 (2005), 65--101.


\bibitem[BBLS11]{BBLS11} S.T.~Belinschi, M.\ Bo\.{z}ejko, F.\ Lehner and  R.\ Speicher, The normal distribution is $\boxplus$-infinitely divisible, Adv.\  Math.\ \textbf{226}, No.\ 4 (2011), 3677--3698. 


\bibitem[BN08]{BN08} S.T.~Belinschi and A.~Nica, On a remarkable semigroup of homomorphisms with respect to free multiplicative convolution, Indiana Univ.\ Math.\ J.\ \textbf{57}, No.\ 4 (2008), 1679--1713.  


\bibitem[BG05]{BG05} F.\ Benaych-Georges, Classical and free infinitely divisible distributions and random matrices, Ann.\ Probab.\ \textbf{33}, No.\ 3 (2005), 1134--1170. 


\bibitem[BG06]{BG06} F.\ Benaych-Georges, Taylor expansions of R-transforms, application to supports and moments, Indiana Univ.\ Math.\ J.\ \textbf{55} (2006), No.\ 2, 465--481.


\bibitem[B05]{B05} H.\ Bercovici, Multiplicative monotonic convolution, Illinois J.\ Math.\ \textbf{49}, No.\ 3 (2005), 929--951.  


\bibitem[BP99]{BP99} H.~Bercovici and V.~Pata, Stable laws and domains of attraction in free probability theory (with an appendix by Philippe Biane), Ann.\ of Math.\ (2) \textbf{149}, No.\ 3 (1999), 1023--1060.  


\bibitem[BV93]{BV93} H.~Bercovici and D.~Voiculescu, Free convolution of measures with unbounded support, Indiana Univ.\ Math.\ J.\ \textbf{42}, No.\ 3 (1993), 733--773. 


\bibitem[B92]{B92} L.\ Bondesson, Generalized gamma convolutions and related classes of distributions and densities, Lecture Notes in Stat.\ \textbf{76}, Springer, New York, 1992. 


\bibitem[BH13]{BH}  M.\ Bo\.zejko and T.\ Hasebe, On free infinite divisibility for classical Meixner distributions, Probab.\ Math.\ Stat.\ \textbf{33}, Fasc.\ 2 (2013), 363--375. 


\bibitem[BDJ06]{BDJ06} W.\ Bryc, A.\ Dembo and T.\ Jiang, Spectral measure of large random  Hankel, Markov and Toeplitz matrices, Ann.\ Probab.\ \textbf{34}, No.\ 1 (2006), 1--38. 


\bibitem[B79]{B79} R.B.\ Burckel, \textit{An introduction to classical complex analysis}, vol.\ 1, Academic Press, New York, 1979. 


 \bibitem[C05]{C05} T.\ Cabanal-Duvilliard, A matrix representation of the Bercovici-Pata bijection, Elect.\ J.\ Probab.\ \textbf{10} (2005), 632--661. 


\bibitem[CG08]{CG08} B.P.\ Chistyakov and F.\ G\"{o}tze, Limit theorems in free probability theory I, Ann.\ Probab.\ \textbf{36}, No.\ 1 (2008), 54--90. 


\bibitem[D09]{D09} N.\ Demni, On generalized Cauchy-Stieltjes transforms of some Beta distributions, Comm.\ Stoch.\ Anal.\ \textbf{3} (2009), 197--210.


\bibitem[F09]{F09} U.\ Franz, Monotone and boolean convolutions for non-compactly supported probability measures, Indiana Univ.\ Math.\ J.\ \textbf{58}, No.\ 3 (2009), 1151--1186.


\bibitem[H12]{H12} K.\ Handa, The sector constants of continuous state branching processes with immigration, J.\ Funct.\ Anal.\ \textbf{262}, Issue 10 (2012),  4488--4524. 


\bibitem[HS04]{HS04} K.\ van Harn and F.W.\ Steutel, \emph{Infinite Divisibility of Probability Distributions on the Real Line,} Marcel Dekker, New York, 2004.


\bibitem[H]{H} T.\ Hasebe, Free infinite divisibility of measures with rational function densities, preprint available on 
http://www.math.sci.hokudai.ac.jp/\verb|~|thasebe/

 \bibitem[M92]{M92} H.\ Maassen, Addition of freely independent random variables, J.\ Funct.\ Anal.\ \textbf{106} (1992), 409--438. 


\bibitem[MPS12]{MPS12} M.\ Maejima, V.\ P\'erez-Abreu and K.\ Sato, A class of multivariate infinitely divisible distributions related to arcsine density, Bernoulli \textbf{18}, No.\ 2 (2012), 476--495.


\bibitem[M10]{M10} W.\ M\l otkowski, Fuss-Catalan numbers in noncommutative probability, Doc.\ Math.\ \textbf{15} (2010), 939--955.


\bibitem[M00]{M00} N.\ Muraki, Monotonic convolution and monotonic L\'{e}vy-Hin\v{c}in formula, preprint, 2000.  

\bibitem[M01]{M01} N.\ Muraki, Monotonic independence, monotonic central limit theorem and monotonic law of small numbers, Infin.\ Dimens.\ Anal.\ Quantum Probab.\ Relat.\ Top.\ \textbf{4} (2001), 39--58.


\bibitem[NS06]{NS06} A.\ Nica and R.\ Speicher, \emph{Lectures on the Combinatorics of Free Probability}, London Math.\ Soc.\ Lecture Notes Series \textbf{335},  Cambridge University Press, 2006.


\bibitem[SW97]{SW97} R.~Speicher and R.~Woroudi, Boolean convolution, in Free Probability Theory (Waterloo, ON, 1995, D.~Voiculescu (ed)), 267--279, Fields Inst.\ Commun.\ \textbf{12}, Amer.\ Math.\ Soc.\  Providence, RI, 1997.


\bibitem[V86]{V86} D.\ Voiculescu, Addition of certain non-commutative random variables, J.\ Funct.\ Anal. \textbf{66} (1986), 323--346. 


\bibitem[V93]{V93} D.\ Voiculescu, The analogues of entropy and of Fisher's information measure in free probability theory, I, Comm.\ Math.\ Phys.\ \textbf{155}(1) (1993), 411--440. 


\end{thebibliography}
\end{document}